\theoremstyle{definition}
\newtheorem{defin}{Definition}[section]
\newtheorem{rem}[defin]{Remark}
\theoremstyle{plane}
\newtheorem{thm}[defin]{Theorem}
\newtheorem{prop}[defin]{Proposition}
\newtheorem{coroll}[defin]{Corollary}
\newtheorem{lemma}[defin]{Lemma}
\newcommand{\vrho}{\varrho}
\newcommand{\wtilde}{\widetilde}
\newcommand{\vphi}{\varphi}
\newcommand{\hra}{\hookrightarrow}
\newcommand{\g}{\gamma}
\newcommand{\eps}{\varepsilon}
\newcommand{\R}{\mathbb{R}}
\newcommand{\N}{\mathbb{N}}
\newcommand{\Z}{\mathbb{Z}}
\renewcommand{\div}{{\rm div}\,}
\newcommand{\Id}{{\rm Id}\,}
\newcommand{\Supp}{{\rm Supp}\,}
\newcommand{\Int}{\displaystyle \int}
\def\ov{\overline}
\def\d{\partial}
\def\dq{\Delta_q}
\def\ddj{\dot \Delta_j}
\def\dj{\Delta_j}
\def\tilde{\widetilde}
\def\hat{\widehat}
\def\div{{\rm div}\,}
\def\s{\sigma}
\def\cC{{\mathcal C}}
\def\cF{{\mathcal F}}
\def\cK{{\mathcal K}}
\def\cP{{\mathcal P}}
\def\cQ{{\mathcal Q}}
\def\cS{{\mathcal S}}
\renewcommand{\div}{{\rm div}\,}
\newcommand{\loc}{{\rm loc}\,}
\def\d{\partial}
\def\div{{\rm div}\,}
\def\ov{\overline}
\def\d{\partial}
\def\dq{\Delta_q}
\def\ddj{\dot \Delta_j}
\def\dj{\Delta_j}
\def\tilde{\widetilde}
\def\hat{\widehat}
\def\div{{\rm div}\,}
\def\s{\sigma}
\def\cC{{\mathcal C}}
\def\cF{{\mathcal F}}
\def\cP{{\mathcal P}}
\def\cQ{{\mathcal Q}}
\def\cS{{\mathcal S}}
\def\cV{{\mathcal V}}
\title{Analysis of an inviscid zero-Mach number system in endpoint Besov spaces for finite-energy initial data }
\author{ Francesco FANELLI and Xian LIAO}
\date\today
\begin{document}

\maketitle

\subsubsection*{Abstract}
{\small
The present paper is the continuation of work \cite{F-L_p}, devoted to the study of
an inviscid  zero-Mach number system in the framework of \emph{endpoint} Besov spaces of type $B^s_{\infty,r}(\R^d)$, $r\in [1,\infty]$, $d\geq 2$, which can be embedded in the Lipschitz class $C^{0,1}$.
In particular, the largest case   $B^1_{\infty,1}$ and the case of H\"older spaces   $C^{1,\alpha}$ are permitted.

The local in time well-posedness result is proved, under an additional $L^2$ hypothesis on the initial inhomogeneity  and velocity field.
A new a priori estimate for  parabolic equations in endpoint spaces $B^s_{\infty,r}$ is presented, which is the key to the proof.

In dimension two, we are able to give a lower bound  for the lifespan, such that the solutions tend to be globally defined when the initial inhomogeneity is small.
There we will show a refined a priori estimate in endpoint Besov spaces   for transport equations with \emph{non solenoidal} transport velocity field.



}

\medbreak

\noindent\textbf{Keywords.}
{\small Zero-Mach number system; endpoint Besov spaces; finite energy; well-posedness; parabolic regularity; lifespan.}

\medbreak
\noindent\textbf{Mathematics Subject Classification (2010).}
{\small Primary: 35Q35. Secondary: 76N10, 35B65.}

\section{Introduction}

In the present paper we will study  the following \emph{inviscid zero-Mach number system}:
\begin{equation}\label{eq:EH}
\left\{ \begin{array}{ccc}
\d_t \rho+\div(\rho v) & =&0,\\
 \d_{t}(\rho v)+\div(\rho v\otimes v ) + \nabla\Pi & = & 0, \\
 \div ( v  + \kappa\rho^{-1}\nabla\rho)& = &0,
\end{array} \right.
\end{equation}
where $\rho=\rho(t,x)\in\R^+$
stands for the mass density, $v=v(t,x)\in\R^d$ for  the velocity field and  $\Pi=\Pi(t,x)$ for the unknown pressure.
The positive heat-conducting coefficient $\kappa=\kappa(\rho)$ depends smoothly on its variable.
The time variable $t$ and the space variable $x$
belong  to $\R^+$ (or to $[0,T]$)  and  $\R^d$, $d\geq2$, respectively.

This model derives from the full compressible, heat-conducting and inviscid system as the Mach number tends to vanish
(see e.g. \cite{Alazard06, Feireisl-N, PLions96,Majda84,Zeytounian04}).
In particular, this singular low-Mach number limit is rigorously justified in Alazard \cite{Alazard06} for smooth enough solutions.
We refer to the introduction of \cite{F-L_p} or the previous literature for more details on the derivation of the system.

Interestingly, System \eqref{eq:EH} can also describe, for instance, a two-component incompressible inviscid mixture with diffusion effects between these two components.
We refer to e.g. \cite{F-K} for more physical backgrounds.

\medbreak

Notice  that if we take simply $\kappa\equiv 0$ (i.e. we have no heat conduction), then System \eqref{eq:EH} reduces to the density-dependent
Euler equations
\begin{equation}\label{system:dens-depend}
\left\{
\begin{array}{cc}
&\d_t\rho+\div(\rho v) =0,\\
&\d_t(\rho v)+ \div(\rho v\otimes v)+ \nabla\Pi=0,\\
&\div v=0.
\end{array}
\right.
\end{equation}
We refer to \cite{Antontsev-K-M,B-V1,B-V2}, among other works, for some well-posedness results for System \eqref{system:dens-depend}.

Let us just mention here that, in \cite{D}, Danchin adopted mainly the functional framework of Besov spaces $B^s_{p,r}$, $1<p<+\infty$, which can be embedded in the set of globally Lipschitz functions.
There he  considered e.g.  the finite-energy   initial velocity field case, the case with $p\in [2,4]$ or the case of small inhomogeneity.
All the assumptions  are, roughly speaking, due to the control of (the low frequencies  of) the pressure term.
In \cite{D-F}, Danchin and the first author treated the endpoint   case $B^s_{\infty,r}$.
They also proved a lower bound for the solutions in the case
of space dimension $d=2$: the \emph{infinite} energy data were
considered as well and one has to resort to the analysis the vorticity of the fluid.

\medbreak
When the fluid is supposed to be viscous, instead, System \eqref{eq:EH} becomes the viscous zero-Mach number system
\begin{equation}\label{eq:LML}
\left\{ \begin{array}{ccc}
\d_t \rho+\div(\rho v) & =&0,\\
 \d_{t}(\rho v)+\div(\rho v\otimes v ) - \div \sigma+ \nabla\Pi & = & 0, \\
 \div ( v  + \kappa\rho^{-1}\nabla\rho)& = &0,
\end{array} \right.
\end{equation}
where we defined the viscous stress tensor
$$
\sigma =2\zeta Sv+\eta\div v\Id, \quad Sv:=\frac 12 (\nabla v+(\nabla v)^T),
$$
for two positive viscous coefficients $\zeta$, $\eta$.

The viscous system \eqref{eq:LML} is the low-Mach number limit system of the full Navier-Stokes equations, and hence it describes for instance
the motion of highly subsonic ideal gases.
See \cite{Alazard06, BdV,   Embid87,Kazhikhov-Smagulov,L, Secchi88}  and references therein for further results.
Let us just mention that, Danchin and the second author \cite{D-L} addressed the well-posedness issue in the general \emph{critical} Besov spaces  $B^{d/{p_1}}_{p_1,1}\times B^{d/{p_2}-1}_{p_2,1}$, with technical restrictions on the Lebesgue exponents $p_1, p_2$.
Under a special relationship between the viscous coefficient and the heat-conduction (or diffusion) coefficient, \cite{L} showed the global-in-time wellposedness result in dimension two.


\medbreak
To our knowledge, there are just few well-posedness results for the inviscid zero-Mach number system \eqref{eq:EH}.
We refer here that  in \cite{B-S-V},   Beir\~ao da Veiga, Serapioni and Valli proved existence of classical solutions on smooth bounded domains for our system \eqref{eq:EH}.

In our previous work \cite{F-L_p}, instead, we investigate the well-posedness in the functional framework of Besov spaces.
There, we reformulated System \eqref{eq:EH} by introducing  new \emph{divergence-free} velocity field.
Similarly, let us immediately perform an \emph{invertible} change of unknowns here, to  introduce the set of  equations (see \eqref{system} below) we will mainly work on:
for the details we refer  to \cite{D-L, F-L_p}.
 
For notational simplicity, we introduce three coefficients, $a=a(\rho)$,  $b=b(\rho)$ and $\lambda=\lambda(\rho)$, such that
\begin{equation}\label{relation:a,b}
\nabla a\,=\,\kappa\nabla\rho
\,=\,-\rho\nabla b,\quad \lambda=\rho^{-1}>0, \quad a(1)=b(1)=0.
\end{equation}
Then, we introduce the new divergence-free ``velocity'' $u$ and the new ``pressure'' $\pi$   as
\begin{equation}\label{relation:u}
u    \,:=\,v+\kappa\rho^{-1}\nabla\rho\,=\,v-\nabla b,
\qquad \pi\,=\,\Pi-\kappa\d_t\rho \,=\,\Pi-\d_t a\,.
\end{equation}
Therefore, System \eqref{eq:EH} can be rewritten as the following system for the unknowns $(\rho, u, \pi)$:
\begin{equation}\label{system}
\left\{
\begin{array}{cc}
&\d_t\rho+u\cdot\nabla\rho-\div(\kappa\nabla\rho)=0,\\
&\d_t u+(u+\nabla b)\cdot\nabla u+\lambda\nabla\pi=h,\\
&\div u=0,
\end{array}
\right.
\end{equation}
where the new nonlinear ``source'' term $h$ reads as
\begin{equation}\label{h}
h(\rho,u)\,=\,\rho^{-1}\div (v\otimes \nabla a)\,=\,
-u\cdot\nabla^2 b-(u\cdot\nabla\lambda)\nabla a-(\nabla b\cdot\nabla\lambda)\nabla a-\div(\nabla b\otimes\nabla b)\,.
\end{equation}

In the above mentioned work \cite{F-L_p}, we studied the  well-posedness of the zero-Mach number system, in its
reformulated version \eqref{system}, in the setting of Besov spaces $B^s_{p,r}(\R^d)$ embedded in the class $C^{0,1}$
of globally Lispchitz functions, that is to say for
$$
s\,>\,1\,+\,\frac{d}{p}\,,\qquad\qquad\mbox{ or }\qquad\qquad s\,=\,1\,+\,\frac{d}{p}\quad\mbox{ and }\quad r\,=\,1\,. \leqno(C)
$$
Such a restriction is in fact necessary, essentially due to the transport equation for the velocity field: preserving the initial regularity
demands $u$ to be at least locally Lipschitz with respect to the space variable. On the other hand, the non-linear source term
$h$ requires the control of this Besov norm on $\nabla^2\rho$: this is guaranteed by the smoothing effect of
of the \emph{parabolic equation} for the density.
{}
Due to technical reasons,   we had to impose the additional condition in \cite{F-L_p}
\begin{equation} \label{eq:p}
p\,\in\,[2,4]\,.
\end{equation}
This hypothesis \eqref{eq:p} ensures that the ``source'' term $h$, composed of quadratic terms, belongs to $L^2(\R^d)$.
 Hence, regarding the pressure which   satisfies an elliptic equation in divergence form
 $$
 \div(\lambda\nabla\pi)=\div(h-(u+\nabla b)\cdot\nabla u),
 $$
  the pressure term $\nabla\pi$ belongs to $L^2$ too.
 This gives control on the low frequencies of the pressure term.

Finally, under conditions $(C)$ and \eqref{eq:p}, we proved local in time well-posedness of System \eqref{system} in spaces $B^s_{p,r}$,
as well as a continuation criterion for its solutions   and a bound from below for the lifespan in any space dimension $d\geq2$.

\medbreak
In the present paper we propose a different study, rather in endpoint Besov spaces $B^s_{\infty,r}$ which still verifies condition $(C)$ (with $p=+\infty$ of course), in the same spirit of work \cite{D-F}.
This functional framework includes, in particular, the case of H\"older spaces of type $C^{1,\alpha}$, and the case of $B^1_{\infty,1}$,
which is the largest one embedded in the space of globally Lipschitz functions, and so the largest one in which
one can expect to recover well-posedness for our system.

We will add a \emph{finite-energy} hypothesis on the initial data, which is fundamental to control the pressure term, just as the above condition \eqref{eq:p} assumed in \cite{F-L_p}.

Then we are able to prove the local in time well-posedness issue for System \eqref{system} in the adopted functional framework.
The key point of the analysis is the proof of new a priori estimates for parabolic equations in spaces $B^s_{\infty,r}$
(see Proposition \ref{prop:heat-holder}).  

The global in time existence of solutions to the inviscid zero-Mach number system is still an open problem, even in the simpler case
of space dimension $d=2$. However, similarly as in \cite{D-F},
we are able to move a first step in this direction: by establishing an explicit lower bound for the lifespan of the solutions
in dimension $d=2$, we show that planar flows tend to be globally defined if the inital density is ``close'' (in an appropriate sense)
to a constant state.
Such a lower bound improves the one stated in \cite{F-L_p}, and it can be proved resorting to
arguments similar as in Vishik \cite{V} and Hmidi-Keraani \cite{H-K-2008}.
More precisely, the \textit{scalar vorticity} satisfies a transport equation, and then one aims at bounding it
\textit{linearly} with respect to the velocity field.
Unluckily, in our case the transport velocity occurring in the vorticity equation is the original vector-field $v$ of
System \eqref{eq:EH}, which is \textit{not} divergence-free:
hence, one can just bound the vorticity linearly in $v$ and $\div v$ (see Proposition \ref{p:transp_0}). 
Since the potential part of $v$ just depends on the  density term $\rho$, for which parabolic effect gives enough regularity
to control $\div v$.

\smallbreak
Let us conclude the introduction by pointing out that we decided to adopt the present functional framework, i.e. $B^s_{\infty,r}\cap L^2$,
just for simplicity and clarity of exposition. Actually, combining the techniques of \cite{F-L_p} with the ones in \cite{D},
it's easy to see that our results can be extended to any space $B^s_{p,r}$ which satisfies condition $(C)$ for any $1<p\leq +\infty$.

\bigbreak
Before going on, we give  an overview of the paper.

The next section is devoted to the statement of our main results.

In Section \ref{s:tools} we briefly present the tools we use in our analysis, namely Littlewood-Paley decomposition
and paradifferential calculus, while in Section \ref{s:est_Besov} we prove fundamental a priori estimates for parabolic
and transport equations in endpoint Besov spaces.

Finally, Section \ref{s:proofs} contains the proof of our results.

\subsubsection*{Acknowledgements}


The authors are deeply grateful to their previous institutions, the Laboratoire d'Analyse et de Math\'ematiques Appliqu\'ees -- UMR 8050,
Universit\'e Paris-Est and BCAM - Basque Center for Applied Mathematics, which they belonged to when the work started.

The first author was partially supported by Grant MTM2011-29306-C02-00, MICINN, Spain,
ERC Advanced Grant FP7-246775 NUMERIWAVES, ESF Research Networking Programme OPTPDE and Grant PI2010-04 of the Basque Government.
During the last part of the work, he was also supported by the project ``Instabilities in Hydrodynamics'',
funded by the Paris city hall (program ``\'Emergences'') and the
Fondation Sciences Math\'ematiques de Paris.

The second author  was partially supported by the  project ERC-CZ
LL1202, funded by the Ministry of Education, Youth and Sports of the Czech Republic.


The first author is member of the Gruppo Nazionale per l'Analisi Matematica, la Probabilit\`a e le loro Applicazioni (GNAMPA) of the
Istituto Nazionale di Alta Matematica (INdAM).

\section{Main Results} \label{s:results} 


As explained in the introduction, in the sequel we will deal with system \eqref{system}-\eqref{h} in endpoint Besov spaces
$B^s_{\infty,r}$  with the indices $s\in\R$ and $r\in[1,+\infty]$ satisfying $(C)$ (for $p=+\infty$), i.e.
\begin{equation} \label{eq:cond_index}
 s\,>\,1\qquad\qquad\qquad\mbox{ or }\qquad\qquad\qquad s\,=\,r\,=\,1\,.
\end{equation}
Recall that this  is sufficient to ensure the embedding $B^s_{\infty,r}\,\hra\,C^{0,1}$.

In order to ensure  the velocity field $u$ to belong to $B^s_{\infty,r}(\R^d)$,   the source term $h$  in the velocity equation, which  involves   two derivatives of the
density $\nabla^2\rho$, should be in the same space.
 Nonetheless, new a priori estimates for parabolic equations in endpoint Besov spaces $B^s_{\infty,r}$ (see
Proposition \ref{prop:heat-holder}  below) will guarantee the gain of two orders of regularity for the density  as time goes by.
For this reason we take the initial inhomogeneity $\varrho_0:=\rho_0-1\,\in\,B^s_{\infty,r}$.
{}
And hence we will    get the density in the so-called Chemin-Lerner space $\tilde L^\infty_T(B^s_{\infty,r})$ and $\tilde L^1_T(B^s_{\infty,r})$.
See Definition \ref{def:Besov,tilde} for the definition of these time-dependent Besov spaces.

Moreover, in order to avoid vacuum regions, we will always suppose that the initial density satisfy
$$
0\,<\,\rho_\ast\,\leq\,\rho_0\,\leq\,\rho^\ast\,.
$$ 
By applying  \textit{maximum principle} on the parabolic equation $\eqref{system}_1$, one gets a priori that the density $\rho$
(if it exists on the time interval $[0,T]$) keeps the same  upper and lower bounds as the initial density $\rho_0$:
\begin{equation*}\label{bound}
0<\rho_\ast\leq \rho(t,x) \leq \rho^\ast,\quad \forall\, t\in [0,T],\, x\in \R^d.
\end{equation*}
Hence, applying  the divergence operator to Equation $\eqref{system}_2$
gives an elliptic equation for $\pi$ of the form
\begin{equation}\label{eq:pi}
\div(\lambda \nabla\pi)= \div (h-v\cdot\nabla u)\,,\qquad\mbox{ with }\quad\lambda=\lambda(\rho)\geq \lambda_\ast:=(\rho^\ast)^{-1}>0\,.
\end{equation}
By a result in \cite{D}, we hence have a priori energy estimate for $\nabla\pi$ (independently on $\rho$):
\begin{equation*}\label{eq:el0}
\lambda_\ast \|\nabla\pi\|_{L^2}\leq\|h-v\cdot\nabla u\|_{L^2}\,.
\end{equation*}
This gives \textit{low frequency} informations for $\nabla\pi$.

One then considers  the following \textit{energy estimates}. 
First of all, the mass conservation law    $\eqref{system}_1$  entails   (provided $u\in L^\infty_T(L^\infty)$)
\begin{equation}\label{L2:rho}
\frac{1}{2}  \Int_{\R^d} |\rho(t)-1|^2 \, +\, \Int^t_0\Int_{\R^d} \kappa |\nabla\rho|^2
\,=\, \frac{1}{2} \|\rho_0-1\|_{L^2}^2.
\end{equation}
Now we rewrite the momentum conservation law  $\eqref{system}_2$ into
\begin{equation}\label{eq:u}
\rho\d_t u+\rho v\cdot\nabla u +\nabla\pi=\div(v\otimes\nabla a).
\end{equation}
Then, using equation $\eqref{eq:EH}_1$ and $\div u=0$, taking the $L^2$ scalar product of the  previous relation by $u$ entails
\begin{equation}\label{L2:velocity}
 \frac{1}{2}\frac{d}{dt} \Int_{\R^d} \rho |u|^2
 \,\equiv\,
\Int_{\R^d} \left(\rho \d_t u+ \rho v\cdot\nabla u \right)\cdot u
=\langle \div(v\otimes \nabla a), u\rangle_{L^2(\R^d)}.
\end{equation}
Recalling the definitions of $a$ and $b$ in \eqref{relation:a,b}, one bounds the above right-hand side by
(up to a multiplicative constant depending on $\rho_\ast$ and $\rho^\ast$)
\begin{align*}
 \bigl(\Theta'(t)\,\|u\|_{L^2}^2\,+\,\|\nabla\rho\|_{L^2}^2\bigr)\,,\;\mbox{ with }\;
  \Theta(t):=\Int^t_0 \left(\|\nabla\rho\|_{L^\infty}^2\,+\, \|\nabla\rho\|_{L^\infty}^4
                   \,+\,\|\nabla^2\rho\|_{L^\infty}     \,+\, \|\nabla^2\rho\|_{L^\infty}^2\right)d\tau\,.
\end{align*}
Hence  if $(\rho_0-1, u_0) \in L^2$ and $\Theta(T)<+\infty$ (this will be ensured by Besov regularity),
then we gather
$$u,\rho-1\,\in\,L^\infty_T(L^2)\,,\quad
 \nabla\rho\,\in\, L^2_T(L^2)\quad
 \mbox{ and hence }\quad h  \in\, L^1 ([0,T];L^2), \nabla\pi\in L^1 ([0,T];L^2).
 $$

To conclude,   we have the following local-in-time wellposedness result for System \eqref{system}.
\begin{thm}\label{thm:L2wp}
Let $d\geq2$ an integer and take $s\in\R$ and $r\in[1,+\infty]$ satisfying condition \eqref{eq:cond_index}.

Suppose that the initial data  $(\rho_0, u_0)$ fulfill
\begin{equation}\label{L2:initial data}
\rho_0-1\,,\,u_0\;\in\;B^s_{\infty,r}(\R^d)\cap L^2\;,\qquad
\rho_0\,\in\,[\rho_\ast, \rho^\ast]\;,\qquad \div u_0\,=\,0\,.
\end{equation}
Then there exist a positive time $T$  and a unique solution
$(\rho,u,\nabla\pi)$ to System \eqref{system} such that   $(\vrho,u,\nabla\pi):=(\rho-1,u,\nabla\pi)$  belongs
to the space $E^s_r(T)$,  defined as the set of  triplet $ (\varrho, u, \nabla \pi) $ such that
\begin{equation}\label{space:E}
\left\{\begin{array}{c }
\varrho\,\in\,\wtilde C\bigl([0,T];B^{s}_{\infty,r}\bigr)\cap\wtilde L^{1}\bigl([0,T];B^{s+2}_{\infty,r}\bigr)\cap C\bigl([0,T];L^2\bigr)\,,
\quad \rho_\ast\,\leq\,\varrho+1\,\leq\,\rho^\ast\,, \\[1ex]
\nabla\varrho\,\in\,L^2\bigl([0,T];L^2\bigr)\,, \\[1ex]
u \,\in\,\wtilde{C}\bigl([0,T];B^{s}_{\infty,r}\bigr)^d\,\cap\,C\bigl([0,T];L^2\bigr)^d\,, \\[1ex]
\nabla\pi \, \in \, \wtilde{L}^{1}\bigl([0,T];B^{s}_{\infty,r}\bigr)^d\cap L^1\bigl([0,T];L^2\bigr)^d\,,
\end{array}\right.
\end{equation}
with $\tilde C_{w}([0,T];B^s_{p,r}) \hbox{ if }\,r=+\infty $.
\end{thm}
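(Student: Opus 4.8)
\noindent\emph{Plan of the proof.} The argument follows the standard four-step scheme for quasilinear problems: construction of approximate solutions by a linear iteration, uniform bounds on a non-trivial time interval, convergence in a space of lower regularity, and finally uniqueness together with continuity in time at the top regularity. For the construction, set $(\varrho^0,u^0,\nabla\pi^0)=(0,u_0,0)$ and, given $(\varrho^n,u^n,\nabla\pi^n)$, first let $\varrho^{n+1}$ solve the linear parabolic equation $\d_t\varrho^{n+1}+u^n\cdot\nabla\varrho^{n+1}-\div\bigl(\kappa(\rho^n)\nabla\varrho^{n+1}\bigr)=0$ with $\varrho^{n+1}|_{t=0}=\vrho_0$, and then let $(u^{n+1},\nabla\pi^{n+1})$ solve the linear system $\d_t u^{n+1}+\bigl(u^n+\nabla b(\rho^{n+1})\bigr)\cdot\nabla u^{n+1}+\lambda(\rho^{n+1})\nabla\pi^{n+1}=h(\rho^{n+1},u^n)$, $\div u^{n+1}=0$, $u^{n+1}|_{t=0}=u_0$, the pressure being fixed, as in \eqref{eq:pi}, by $\div\bigl(\lambda(\rho^{n+1})\nabla\pi^{n+1}\bigr)=\div\bigl(h(\rho^{n+1},u^n)-(u^n+\nabla b(\rho^{n+1}))\cdot\nabla u^{n+1}\bigr)$. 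Taking the divergence of the velocity equation and using this definition gives $\d_t(\div u^{n+1})=0$, so the constraint $\div u^{n+1}=0$ propagates for free from the datum. Solvability of these linear problems, and the fact that $\kappa(\rho^n),\lambda(\rho^n),a(\rho^n),b(\rho^n)$ remain in the same Besov class as $\varrho^n$ — the functions $\kappa,\lambda,a,b$ being smooth and vanishing suitably at $\rho=1$, $\rho^n\in[\rho_\ast,\rho^\ast]$ by the maximum principle, and $B^s_{\infty,r}$ being an algebra under \eqref{eq:cond_index} — are routine.

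The heart of the proof is the uniform estimate on $[0,T]$. The maximum principle applied to $\eqref{system}_1$ keeps $\rho^{n+1}\in[\rho_\ast,\rho^\ast]$. The $L^2$ energy estimates carried out in the Introduction — the identity \eqref{L2:rho} and the relation \eqref{L2:velocity}, closed by Gronwall's lemma through the functional $\Theta$ — bound $\varrho^{n+1}$ in $L^\infty_T(L^2)$, $\nabla\varrho^{n+1}$ in $L^2_T(L^2)$ and $u^{n+1}$ in $L^\infty_T(L^2)$, hence also $h(\rho^{n+1},u^n)$ and $\nabla\pi^{n+1}$ in $L^1_T(L^2)$. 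The parabolic estimate of Proposition \ref{prop:heat-holder}, applied to the equation for $\varrho^{n+1}$ rewritten as $\d_t\varrho^{n+1}-\kappa(\rho^n)\Delta\varrho^{n+1}=-u^n\cdot\nabla\varrho^{n+1}+\kappa'(\rho^n)\nabla\rho^n\cdot\nabla\varrho^{n+1}$, bounds $\varrho^{n+1}$ in $\wtilde L^\infty_T(B^s_{\infty,r})\cap\wtilde L^1_T(B^{s+2}_{\infty,r})$, the two terms on the right being subcritical with respect to $\nabla^2\varrho^{n+1}$ and absorbed for $T$ small (using $u^n\in L^\infty_T(C^{0,1})$). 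Finally the transport estimate of Section \ref{s:est_Besov} in $B^s_{\infty,r}$, whose drift $u^n+\nabla b(\rho^{n+1})$ is Lipschitz by $B^s_{\infty,r}\hra C^{0,1}$ and the parabolic gain, controls $u^{n+1}$ in $\wtilde C([0,T];B^s_{\infty,r})$ by the datum plus the time-integral of the $B^s_{\infty,r}$-norms of $h(\rho^{n+1},u^n)$ and $\lambda(\rho^{n+1})\nabla\pi^{n+1}$; here $h$ is handled by product and composition laws using precisely the $B^{s+2}_{\infty,r}$-regularity of the density, while $\nabla\pi^{n+1}$ is estimated by splitting frequencies, the high ones from the Besov elliptic estimate for $\div(\lambda(\rho^{n+1})\nabla\pi^{n+1})=\dots$ with $B^s_{\infty,r}$ coefficient and the low ones from the $L^2$ bound above. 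These bounds are coupled; one assembles them into a single estimate for the $E^s_r(T)$-norm of $(\varrho^{n+1},u^{n+1},\nabla\pi^{n+1})$ which, for $T$ small depending only on the norms in \eqref{L2:initial data}, keeps the whole sequence in a fixed ball of $E^s_r(T)$.

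For the convergence one estimates the differences $\delta\varrho^n:=\varrho^{n+1}-\varrho^n$, $\delta u^n:=u^{n+1}-u^n$, $\delta\nabla\pi^n:=\nabla\pi^{n+1}-\nabla\pi^n$, which solve linear equations of the same structure with right-hand sides quadratic in the uniformly bounded iterates, by the same parabolic, transport and elliptic estimates but \emph{one derivative below}: $\delta\varrho^n$ in $\wtilde L^\infty_T(B^{s-1}_{\infty,r})\cap\wtilde L^1_T(B^{s+1}_{\infty,r})$, $\delta u^n$ in $\wtilde L^\infty_T(B^{s-1}_{\infty,r})$, $\delta\nabla\pi^n$ in $\wtilde L^1_T(B^{s-1}_{\infty,r})$, together with the matching $L^2$ quantities. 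For $T$ possibly smaller this gives a contraction, hence convergence of the sequence; the limit $(\varrho,u,\nabla\pi)$ solves \eqref{system}, belongs to $E^s_r(T)$ by lower semicontinuity of the norms (with $\wtilde C_w$ in place of $\wtilde C$ when $r=+\infty$), and its continuity in time at the top regularity is read off the equations (parabolic smoothing for $\varrho$, the transport equation for $u$, the elliptic equation for $\nabla\pi$). Uniqueness comes from the very same stability estimate in $B^{s-1}_{\infty,r}\cap L^2$ and Gronwall's lemma, which is legitimate since any solution in $E^s_r(T)$ is Lipschitz in the space variable.

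The main obstacle is the closure of the uniform bounds, because density and velocity are tightly coupled: Proposition \ref{prop:heat-holder} returns two extra space derivatives on $\rho$ only when the drift $u$ is Lipschitz, while the transport estimate for $u$ needs exactly that $\nabla^2\rho$-control — both through $h$ and through the pressure — so the parabolic and transport estimates have to be closed \emph{simultaneously}, with $T$ tuned to beat at once the parabolic nonlinearity and the exponential growth from the transport part. The second delicate point is the pressure: the variable-coefficient elliptic estimate controls only frequencies bounded away from the origin, so the low frequencies of $\nabla\pi$ must be recovered from the $L^2$ energy estimate — this is precisely the role of the finite-energy hypothesis \eqref{L2:initial data}, playing here the part of the restriction $p\in[2,4]$ of \cite{F-L_p} — and reconciling the two regimes inside the Chemin--Lerner norms requires some bookkeeping. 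Finally, working at the endpoint $p=+\infty$, and a fortiori in $B^1_{\infty,1}$, is exactly what forces the use of the tailored parabolic estimate of Proposition \ref{prop:heat-holder} instead of classical maximal regularity, and what makes the borderline product and transport estimates of Section \ref{s:est_Besov} indispensable.
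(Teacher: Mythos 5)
Your construction of the iterative scheme and your uniform bounds follow essentially the same path as the paper: the same linearization (paper's system \eqref{L2seqeq}), the maximum principle for the bounds on $\rho^n$, the $L^2$ energy estimates to recover the low frequencies of $\nabla\pi^n$, Proposition \ref{prop:heat-holder} for the density, transport estimates for $u^n$ and a frequency-split elliptic estimate for the pressure, all closed simultaneously on a small time interval (the paper implements the smallness through the auxiliary free heat solution $\varrho_L$ and an interpolation through $B^{s-1/2}_{\infty,1}$ to absorb $\nabla\log\rho^n\cdot\nabla\pi^n$, but these are details of the same strategy you describe).

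The genuine divergence, and the weak point of your proposal, is the convergence/uniqueness step. You propose stability estimates ``one derivative below'', i.e.\ for $(\delta\varrho^n,\delta u^n,\nabla\delta\pi^n)$ in $B^{s-1}_{\infty,r}$-based spaces plus matching $L^2$ quantities. This is fine when $s>1$, but the theorem explicitly includes the endpoint $s=r=1$ (indeed this is the main point of the paper), and there $B^{s-1}_{\infty,r}=B^0_{\infty,1}$ is exactly where your scheme breaks down as stated: the drift in the velocity difference equation, $u^{n-1}-\kappa^{n-1}\nabla\log\rho^n$, is \emph{not} divergence-free, so the classical transport estimate (Proposition \ref{p:transport}) is unavailable at $\sigma=0$ and one would have to invoke something like Proposition \ref{p:transp_0} or restructure the terms; the forcing $\div(\delta\kappa^{n-1}\nabla\varrho^{n-1})$ in the density difference equation pairs $\nabla\delta\kappa^{n-1}\in B^{s-2}$ with $\nabla\varrho^{n-1}$, so landing in $L^1_T(B^{s-1})$ requires Chemin--Lerner interpolation bookkeeping you do not address; and the elliptic estimate for $\nabla\delta\pi^n$ at the $B^0_{\infty,1}$ level faces the same low-frequency and borderline-product issues as in the existence part, now without any smallness to absorb them. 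The paper avoids all of this by running the entire stability argument purely in energy spaces: $\delta\varrho^n$ in $L^\infty_T(H^1)\cap L^2_T(H^2)$, $\delta u^n$ in $L^\infty_T(L^2)$, $\nabla\delta\pi^n$ in $L^1_T(L^2)$ (system \eqref{L2seqeq:diff} and estimates \eqref{L2diffseqest:rho}--\eqref{L2diffseqest:u}), which is precisely what the finite-energy hypothesis buys: the right-hand sides only need to be in $L^1_T(L^2)$, the non-solenoidal drift causes no trouble in $L^2$ energy identities, and the only delicate point is justifying the $H^1$-level identity for $\nabla\delta\varrho^n$ (hence starting from $n\geq 2$). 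Uniqueness is then obtained from the same $L^2$-based stability estimate, not from a $B^{s-1}_{\infty,r}$ one. If you want to keep your Besov-level contraction you must either exclude the endpoint or supply the missing endpoint arguments; otherwise you should switch the convergence and uniqueness step to the energy-space estimates as in the paper.
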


\begin{rem}\label{rem:origin}
Let us remark the well-posedness result for the original system \eqref{eq:EH}.
 According to  the change of variables \eqref{relation:u}, one knows
$$u=\cP v,\quad \nabla b=\cQ v\,,\qquad
\mbox{ where }\quad \hat{\cQ u}(\xi) = -  (\xi/{|\xi|^2})\,\xi\cdot\hat u(\xi),
\quad \cP=I-\cQ.
$$
Hence, for the original system  \eqref{eq:EH}, if the initial datum $(\rho_0, v_0)$ satisfies
$$
0<\rho_\ast\leq \rho_0\leq \rho^\ast\,,\quad \nabla b(\rho_0)=\cQ v_0,
\qquad \rho_0-1, \cP v_0\in B^s_{\infty,r} \cap L^2  ,
$$
then there exist a $T>0$ and a unique solution $(\rho,v,\nabla\Pi)$ to System \eqref{eq:EH} such that
$\rho_\ast\leq \rho\leq\rho^\ast$ and
\begin{equation*}
\left\{\begin{array}{c }
\varrho=\rho-1 \in  \wtilde C\bigl([0,T];B^s_{\infty,r}\bigr)\cap \wtilde L^{1}\bigl([0,T];B^{s+2}_{\infty,r}\bigr)\cap
C\bigl([0,T];L^2\bigr)\,,
\quad \nabla\varrho\in L^2\bigl([0,T];L^2\bigr)\,, \\[1ex]
\cP v   \in \wtilde C\bigl([0,T];B^s_{\infty,r}\bigr) \cap C\bigl([0,T];L^2\bigr)\,,
\quad v\in \wtilde C\bigl([0,T]; B^{s-1}_{\infty,r}\bigr)\,, \\[1ex]
\quad \nabla\Pi   \in   \wtilde{L}^{1}\bigl([0,T];B^s_{\infty,r}\bigr) \cap L^1\bigl([0,T];L^2\bigr)\,,
\end{array}\right.
\end{equation*}
with $\tilde C_{w}([0,T];B^s_{p,r}) \hbox{ if }\,r=+\infty $.

\end{rem}

\begin{rem}\label{rem:any_p}
As said in the  introduction, we can replace the Besov space $B^s_{\infty,r}$ in Theorem \ref{thm:L2wp}
by any general Besov space $B^s_{p,r}$, $p\in\,]1,+\infty]$ such that condition $(C)$ is fulfilled.

The proof is quite standard, and it goes along the lines
of the one in \cite{F-L_p}, with suitable modifications corresponding to the finite energy conditions. One can refer also to paper \cite{D}, where an analogous result is proved
for the density-dependent Euler equations.

\end{rem}

If   $\rho\equiv 1$,   System \eqref{system} becomes the classical  Euler system.
For this system, the global-in-time existence issue in dimension $d=2$ has been well-known since 1933, due to the pioneering
work Wolibner \cite{W}.
For non-homogeneous perfect fluids, see system \eqref{system:dens-depend}, it's still open if its solutions
exist globally in time. However, in \cite{D-F} it was proved that, for
initial densities close to a constant state, the lifespan of the corresponding solutions tends to \textit{infinity}.
In analogy, we have the following result for our system.
\begin{thm} \label{th:2D_life}
Let $d=2$, and let us assume the hypotheses of Theorem \ref{thm:L2wp}.

Then there exist  $\ell>5$ and $L>0$ (depending only on $\rho_\ast, \rho^\ast, s, r$) such that the
lifespan of the solution to System \eqref{system}, given by Theorem \ref{thm:L2wp}, is bounded from below by the quantity
\begin{equation} \label{est_life_2D:lifespan}
\frac{  L}{ \Gamma_0 }\,
\log\left(\frac{  L}{ \Gamma_0^2 }\log
\left( 1+\frac{  L}{\left(1+\|\vrho_0\|^\ell_{B^1_{\infty,1}}\right)
\|\vrho_0\|_{B^1_{\infty,1}}}\right)\right),
\end{equation}
where we defined $\Gamma_0=1+  \|\vrho_0\|^2_{L^2}+\|u_0\|_{L^2\cap B^1_{\infty,1}}$.
\end{thm}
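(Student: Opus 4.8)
The plan is to adapt the two-dimensional argument of Danchin--Fanelli \cite{D-F}, exploiting that in $d=2$ the scalar vorticity $\omega = \partial_1 v^2 - \partial_2 v^1$ of the original field $v$ satisfies a transport equation and must be propagated with only a \emph{logarithmic} loss. First I would fix the local solution on its maximal interval $[0,T^*)$ furnished by Theorem \ref{thm:L2wp}, set up the continuation criterion (the solution can be continued as long as $\int_0^T(\|\nabla v\|_{L^\infty} + \|\nabla^2\rho\|_{L^\infty} + \ldots)\,d\tau < \infty$, exactly the quantities controlling $\Theta$), and then run a bootstrap: assume a smallness/boundedness ansatz on $[0,T]$ and show it self-improves, which forces $T^* $ to be at least the quantity \eqref{est_life_2D:lifespan}. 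The key inputs are three. (i) The parabolic equation $\partial_t\rho + v\cdot\nabla\rho - \div(\kappa\nabla\rho)=0$ (equivalently $\eqref{system}_1$) with the new a priori estimate of Proposition \ref{prop:heat-holder}, which gains two derivatives: schematically $\|\vrho\|_{\tilde L^\infty_T(B^1_{\infty,1})} + \|\vrho\|_{\tilde L^1_T(B^3_{\infty,1})} \lesssim \|\vrho_0\|_{B^1_{\infty,1}}\exp(C\int_0^T\|\nabla v\|_{B^0_{\infty,1}})$, together with the $L^2$ energy bound \eqref{L2:rho}. In particular $\div v = -\div(\kappa\rho^{-1}\nabla\rho)$ is controlled by this parabolic smoothing, so $\div v \in L^1_T(B^1_{\infty,1})$ with a norm that is \emph{quadratically small} in $\vrho_0$ via interpolation between the $L^2$ bound and the high-regularity bound. (ii) The refined transport estimate for the vorticity equation with non-solenoidal velocity, Proposition \ref{p:transp_0}: since $v$ is not divergence-free one only gets $\|\omega(t)\|_{B^0_{\infty,1}} \lesssim \left(\|\omega_0\|_{B^0_{\infty,1}} + \int_0^t \|\,\text{forcing}\,\|_{B^0_{\infty,1}} + \|\div v\|_{B^1_{\infty,1}}\|\omega\|_{B^0_{\infty,1}}\right)$ with a linear-in-$\|v\|$ growth in the transport exponent rather than the exponential one; here the forcing term comes from $\nabla\Pi$ (which, after taking the curl, reduces to commutators and lower-order terms) and from the density-gradient contributions in the momentum equation. (iii) The elliptic estimate for $\nabla\pi$ (and $\nabla\Pi$) from \eqref{eq:pi}--\eqref{eq:el0} plus the Besov elliptic regularity, giving $\|\nabla\pi\|_{B^1_{\infty,1}} \lesssim \|h - v\cdot\nabla u\|_{B^1_{\infty,1}} + (\text{low-freq }L^2\text{ term})$.

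Concretely I would close the estimate as follows. Recover $v$ from $u$ and $\rho$ via $v = u + \nabla b(\rho)$, so $\|v\|_{B^1_{\infty,1}} \lesssim \|u\|_{B^1_{\infty,1}} + \|\vrho\|_{B^2_{\infty,1}}$; control $\nabla v$ in $L^1_T(B^0_{\infty,1})$ by $\int_0^T\|u\|_{B^1_{\infty,1}} + \int_0^T\|\vrho\|_{B^3_{\infty,1}}$, the latter being absorbed by the parabolic gain. Then the velocity transport equation $\partial_t u + v\cdot\nabla u + \lambda\nabla\pi = h$ gives $\|u\|_{\tilde L^\infty_T(B^1_{\infty,1})} \lesssim \left(\|u_0\|_{B^1_{\infty,1}} + \int_0^T\|h\|_{B^1_{\infty,1}} + \int_0^T\|\nabla\pi\|_{B^1_{\infty,1}}\right)\exp\bigl(C\int_0^T\|\nabla v\|_{B^0_{\infty,1}}\bigr)$. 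The crucial two-dimensional refinement is to replace the control of $\|\nabla v\|_{L^\infty}$ by a logarithmic interpolation inequality of Vishik/Hmidi--Keraani type: $\|\nabla v\|_{L^\infty} \lesssim \|\div v\|_{L^\infty} + \|\omega\|_{L^\infty}\log\!\bigl(e + \|v\|_{B^1_{\infty,1}}/\|\omega\|_{L^\infty}\bigr)$, combined with the \emph{linear} (in $\|v\|$) growth for $\|\omega(t)\|_{B^0_{\infty,1}}$ coming from Proposition \ref{p:transp_0}. Writing $V(t) := \int_0^t \|\nabla v\|_{L^\infty}$ and $X(t)$ for the combined Besov norm $\|u\|_{\tilde L^\infty_t(B^1_{\infty,1})} + \|\vrho\|_{\tilde L^\infty_t(B^1_{\infty,1})} + \|\vrho\|_{\tilde L^1_t(B^3_{\infty,1})}$, one ends up with a closed differential inequality of the form $X'(t) \lesssim \Gamma_0^2\,X(t)\log(e+X(t))\log(e+\ldots)$ with a double logarithm — exactly the structure producing the iterated $\log\log$ in \eqref{est_life_2D:lifespan} — where the $\Gamma_0^2$ factor arises because the small parameter $\|\vrho_0\|_{L^2}$ enters quadratically through $\div v$ and the source $h$ (which is genuinely quadratic in $\nabla\rho$, $\nabla b$). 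Integrating this inequality with the (at $t=0$ small) initial value $X(0)\sim \|\vrho_0\|_{B^1_{\infty,1}}$ yields a lower bound for the time at which $X$ can blow up, of the precise form stated. The appearance of the power $\ell>5$ in \eqref{est_life_2D:lifespan} comes from tracking the polynomial-in-$\|\vrho_0\|_{B^1_{\infty,1}}$ prefactors that multiply the genuinely nonlinear terms (products of several Besov norms of $\nabla b$, $\nabla\lambda$, etc., plus the elliptic/commutator losses), and one simply keeps $\ell$ as whatever finite exponent the bookkeeping produces.

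The main obstacle I expect is the control of the pressure gradient $\nabla\pi$ in the high-norm $B^1_{\infty,1}$ \emph{without} reintroducing an exponential dependence on $\|v\|$. The elliptic operator $\div(\lambda\nabla\cdot)$ has variable, merely Lipschitz-type coefficient $\lambda=\lambda(\rho)$, so the standard $B^1_{\infty,1}$ elliptic estimate costs a factor like $\exp(C\|\nabla\lambda\|_{B^0_{\infty,1}})$ or at least a polynomial in $\|\vrho\|_{B^2_{\infty,1}}$; one must check that this factor is harmless in dimension two, i.e. that it is again controlled by the parabolic smoothing of $\rho$ and is small (or at worst polynomially bounded) in $\|\vrho_0\|$, so that it can be lumped into the $(1+\|\vrho_0\|^\ell_{B^1_{\infty,1}})$ prefactor rather than appearing in the time-exponential. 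A secondary delicate point is justifying the curl computation on the momentum equation \eqref{eq:u}: taking $\partial_1(\cdot)^2 - \partial_2(\cdot)^1$ of $\rho\partial_t u + \rho v\cdot\nabla u + \nabla\pi = \div(v\otimes\nabla a)$ produces density-gradient commutators $[\nabla,\ \rho](\ldots)$ and terms involving $\nabla\rho\times\nabla\pi$; all of these must be shown to be estimated \emph{linearly} in $\|v\|_{B^1_{\infty,1}}$ (their regularity being supplied by the parabolic gain on $\rho$) so that Proposition \ref{p:transp_0} indeed delivers the sub-exponential growth of $\|\omega\|_{B^0_{\infty,1}}$ that the $\log\log$ lifespan bound rests on.
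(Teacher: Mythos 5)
Your skeleton --- the scalar vorticity $\omega=\partial_1v^2-\partial_2v^1$ transported by the \emph{non-solenoidal} field $v$, Proposition \ref{p:transp_0} to avoid exponential growth, Proposition \ref{prop:heat-holder} to control $\div v=\Delta b(\rho)$ and the forcing terms through parabolic smoothing, the $L^2$ energy and elliptic bounds for the pressure, all run inside a bootstrap --- is indeed the strategy of the paper. Two side remarks: the paper takes the curl of the $v$-form of the momentum equation, $\partial_t v+v\cdot\nabla v+\lambda\nabla\Pi=0$, which directly yields $\partial_t\omega+v\cdot\nabla\omega+\omega\,\Delta b+\nabla\lambda\wedge\nabla\Pi=0$ and spares you the $\rho$-weighted commutators you list as a delicate point; and the Vishik/Hmidi--Keraani logarithmic interpolation for $\|\nabla v\|_{L^\infty}$ is superfluous here, since one works with $\|\omega\|_{B^0_{\infty,1}}$ and the low--high frequency splitting $\|u\|_{B^1_{\infty,1}}\lesssim\|u\|_{L^2}+\|\omega\|_{B^0_{\infty,1}}$ already controls the Lipschitz norm with no logarithmic loss.

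The genuine gap is in how you claim the double-logarithmic lower bound emerges. You assert a closed inequality $X'\lesssim\Gamma_0^2\,X\log(e+X)\log(e+\dots)$ and that integrating it from $X(0)\sim\|\vrho_0\|_{B^1_{\infty,1}}$ gives ``the time at which $X$ can blow up'' in the form \eqref{est_life_2D:lifespan}. This cannot work as written: first, $X(0)$ contains $\|u_0\|_{B^1_{\infty,1}}$ and is not small; second, and more importantly, an inequality of type $X'\lesssim X\log X$ (even with further logarithmic factors) is of Osgood type, so it never forces finite-time blow-up --- it gives (double-)exponential growth for all times, and no finite threshold time can be read off from it. The actual mechanism is different: on a bootstrap interval where $R:=\|\vrho\|_{L^\infty_t(B^1_{\infty,1})}$ satisfies $R^6\le 1$ and $\int_0^tR^3\le 2R_0$, the parabolic estimate gives $R+S\le C R_0\exp\bigl(C\int_0^t(1+U^2)\bigr)$; then Proposition \ref{p:transp_0} (growth \emph{linear}, not exponential, in $\int(\|\nabla v\|_{L^\infty}+\|\div v\|_{B^\beta_{\infty,\infty}})$) combined with the pressure and energy bounds yields the linear integral inequality $X(t)\le C\Gamma_0\bigl(1+\int_0^tX\bigr)$ for $X=\|u\|_{L^2\cap B^1_{\infty,1}}$, hence the single exponential $X(t)\le C\Gamma_0e^{C\Gamma_0 t}$. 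The iterated logarithm in \eqref{est_life_2D:lifespan} then comes from the bootstrap closure, not from an ODE with logarithmic nonlinearity: quantities of the form $R_0(1+R_0^\ell)\exp\bigl(C\int_0^t(1+X^2)\bigr)$ must stay below $1$ (and an analogous one below $1+\|\vrho_0\|_{L^2}^2+X_0$), and inserting the exponential bound for $X$ and solving these smallness constraints for the largest admissible $T$ produces exactly the $\frac{L}{\Gamma_0}\log\bigl(\frac{L}{\Gamma_0^2}\log(1+\cdots)\bigr)$ structure, with $\ell$ tracking the polynomial prefactors as you anticipate. Your proposal is missing this closure argument, which is precisely where the quantitative form of the theorem comes from; the three analytic inputs (i)--(iii) you identify are otherwise consistent with the paper.
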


\begin{rem}
One easily see from \eqref{est_life_2D:lifespan} that if the initial density $\rho_0$ tends to
$1$, then this lower bound tends to infinity, which means the solutions tends to exist all the time.
\end{rem}

\begin{rem}
We can just consider the limit Besov space norm $B^1_{\infty,1}$ in the statement
of Theorem \ref{th:2D_life}, and we will concentrate only on the  $B^1_{\infty,1}$ case in the proof.
In fact, similar as in the proof of the continuation criterion in \cite{F-L_p}, by classical commutator estimates and product estimates (see Proposition \ref{p:comm} and Proposition \ref{c:op}), one knows that if, on the time interval $[0,T^\ast]$, $T^\ast<+\infty$, one has
$$
\|(\nabla\rho, u)\|_{L^\infty_{T^\ast}(L^\infty)}
+
\int^{T^\ast}_0
\Bigl(  1+\|\nabla u\|_{L^\infty}  +\|\nabla\varrho\|_{L^\infty}^4
+\|\nabla^2\varrho\|_{L^\infty}^2 +\|\nabla\pi\|_{L^\infty}\Bigr) <+\infty,
$$
then the solution $(\varrho, u)$ with the initial data $(\varrho_0, u_0)\in B^s_{\infty,r}$ will be well defined in the solution space $E^s_r(T^\ast)$.
On the other side, the above finiteness condition can be ensured if one already has the solution defined in the limit solution space $E^1_{1}(T^\ast)$.
%


\end{rem}


Before going on, let us introduce some notations.
We agree that in the sequel, $C$ always denotes some ``harmless'' constant depending only on $d, s, r, \rho_\ast,\rho^\ast$,
unless otherwise defined. Notation $A\lesssim B$ means $A\leq C B$ and $A\sim B$ says $A$ equals to $B$, up to a constant factor.
For notational convenience, we denote
$$\vrho=\rho-1.$$

\section{A brief review of Fourier analysis} \label{s:tools}
In this section, we recall  some   definitions and results in  Fourier analysis which will be used in this paper.
Unless otherwise specified, all the presentation in this section have been proved in \cite{B-C-D}, Chapter 2.

Firstly, let's recall the Littlewood-Paley decomposition.
Fix a smooth radial function
$\chi$ supported in   the ball $B(0,\frac43),$ such that it
equals to $1$ in a neighborhood of $B(0,\frac34)$
and    is nonincreasing
over $\R_+$.
Define
$\varphi(\xi)=\chi(\frac\xi2)-\chi(\xi).$
{}
The {\it non-homogeneous dyadic blocks} $(\Delta_j)_{j\in\Z}$
 are defined by\footnote{In what follows we agree  that  $f(D)$ stands for
the pseudo-differential operator $u\mapsto\cF^{-1}(f(\xi) \cF u(\xi)).$}
$$
\dj:=0\ \hbox{ if }\ j\leq-2,\quad\Delta_{-1}:=\chi(D)\quad\hbox{and}\quad
\Delta_j:=\varphi(2^{-j}D)\ \text{ if }\  j\geq0.
$$
We  also introduce the following low frequency cut-off operators:
$$
S_ju:=\chi(2^{-j}D)=\sum_{j'\leq j-1}\Delta_{j'}\quad\text{for}\quad j\geq0,\quad S_j u\equiv 0 \quad\text{for}\quad j\leq0.
$$
One hence defines \emph{non-homogeneous Besov space} $B^s_{p,r}$ as follows:
\begin{defin}
\label{def:besov}
  Let  $u\in \cS'$, $s\in \R$, $(p,r)\in [1,\infty]^2.$ We set
$$
\|u\|_{B^s_{p,r}}:=\bigg(\sum_{j} 2^{rjs}
\|\Delta_j  u\|^r_{L^p}\bigg)^{\frac{1}{r}}\ \text{ if }\ r<\infty
\quad\text{and}\quad
\|u\|_{B^s_{p,\infty}}:=\sup_{j}\left( 2^{js}
\|\Delta_j  u\|_{L^p}\right).
$$
The space $B^s_{p,r}$ is  the subset of  tempered distributions $u$ such  that
$\|u\|_{B^s_{p,r}}$ is finite.
\end{defin}

Recall that, for all $s\in\R$, we have the equivalence $H^s\equiv B^s_{2,2}$, while for all $s\in\,\R_+\!\!\setminus\!\N$,
the space $B^s_{\infty,\infty}$ is actually the H\"older space $ {C}^s$.
If $s\in\N$, instead, we set $ {C}^s_*:=B^s_{\infty,\infty}$, to distinguish it from the space $ {C}^s$ of
the differentiable functions with continuous partial derivatives up to the order $s$. Moreover, the strict inclusion
$ {C}^s_b\,\hra\, {C}^s_*$ holds, where $ {C}^s_b$ denotes the subset of $ {C}^s$ functions bounded with all
their derivatives up to the order $s$.
Finally, for $s<0$, the ``negative H\"older space'' $ {C}^s$ is defined as the Besov space $B^s_{\infty,\infty}$.

\medbreak
 For spectrally localized functions, one has the following   \emph{Bernstein's inequalities}:
  \begin{lemma}\label{lpfond}
There exists a $C>0$  such that, for any $k\in \Z^+$, $\lambda\in\R^+$,  $(p,q)\in [1,\infty]^2$ with  $p\leq q$, then
$$
\displaylines{
{\rm Supp}\, \widehat u \subset   B(0,\lambda )
\Longrightarrow
\|  u\|_{L^q} \leq
 C \lambda^{ d(\frac{1}{p}-\frac{1}{q})}\|u\|_{L^p};
 \cr
{\rm Supp}\, \widehat u \subset \{\xi\in\R^N\,/\,  \lambda\leq|\xi|\leq 2\lambda\}
\Longrightarrow C^{-k-1}\lambda^k\|u\|_{L^p}
\leq
\|\nabla^k u\|_{L^p}
\leq
C^{k+1}  \lambda^k\|u\|_{L^p}.
}$$
\end{lemma}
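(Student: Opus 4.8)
The plan is to derive both estimates from the elementary fact that a spectrally localized function equals its own convolution with the (suitably rescaled) inverse Fourier transform of a \emph{fixed} smooth cut-off, after which one only has to invoke Young's convolution inequality together with the scaling of $L^r$ norms under dilations. For the first inequality, I would fix $\theta\in\cS(\R^d)$ with $\widehat\theta\equiv1$ on $B(0,1)$ and set $\theta_\lambda:=\lambda^d\theta(\lambda\,\cdot)$, so that $\widehat{\theta_\lambda}\equiv1$ on $B(0,\lambda)$; then $\Supp\widehat u\subset B(0,\lambda)$ forces $u=\theta_\lambda*u$. Choosing $r\in[1,\infty]$ by $1+\tfrac1q=\tfrac1r+\tfrac1p$ --- which is admissible precisely because $p\le q$ --- Young's inequality gives
$$
\|u\|_{L^q}\ \le\ \|\theta_\lambda\|_{L^r}\,\|u\|_{L^p}\ =\ \lambda^{\,d(1-1/r)}\,\|\theta\|_{L^r}\,\|u\|_{L^p}\ =\ \lambda^{\,d(1/p-1/q)}\,\|\theta\|_{L^r}\,\|u\|_{L^p}\,,
$$
and one concludes since $\|\theta\|_{L^r}\le\|\theta\|_{L^1}+\|\theta\|_{L^\infty}$ uniformly in $r$.

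For the second inequality I would reduce everything to the case $k=1$ and then iterate; this keeps the constants independent of $k$ with no bookkeeping. Fix a smooth $\psi$, compactly supported and vanishing near the origin, with $\psi\equiv1$ on $\{1\le|\xi|\le2\}$, and set $\psi_\lambda:=\psi(\cdot/\lambda)$. If $\Supp\widehat u\subset\{\lambda\le|\xi|\le2\lambda\}$, then $\widehat{\partial_j u}=i\xi_j\widehat u=i\xi_j\,\psi_\lambda\,\widehat u$ (the cut-off being $\equiv1$ on $\Supp\widehat u$); since the symbol $i\xi_j\psi_\lambda(\xi)=\lambda\,\bigl(i\eta_j\psi(\eta)\bigr)\big|_{\eta=\xi/\lambda}$ has inverse Fourier transform $\lambda^{d+1}g_j(\lambda\,\cdot)$ with $g_j:=\cF^{-1}(i\eta_j\psi)\in\cS(\R^d)$, Young yields $\|\partial_j u\|_{L^p}\le\lambda\,\|g_j\|_{L^1}\,\|u\|_{L^p}$, i.e. $\|\nabla u\|_{L^p}\le C\lambda\,\|u\|_{L^p}$ with $C$ depending only on $d$ and $\psi$. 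Conversely, on $\Supp\psi$ one may divide by $|\eta|^2$, so from $\widehat u=\sum_j\frac{-i\xi_j}{|\xi|^2}\,\psi_\lambda(\xi)\,\widehat{\partial_j u}$ and the same rescaling (the fixed symbol $\frac{-i\eta_j}{|\eta|^2}\psi(\eta)$ being again smooth and compactly supported) one gets $\|u\|_{L^p}\le C\lambda^{-1}\|\nabla u\|_{L^p}$. Since $\partial^\beta u$ has the same spectral support as $u$ for every multi-index $\beta$, applying these two estimates successively to $u,\nabla u,\dots,\nabla^{k-1}u$ produces $\|\nabla^k u\|_{L^p}\le(C\lambda)^k\|u\|_{L^p}$ and $\|u\|_{L^p}\le(C\lambda^{-1})^k\|\nabla^k u\|_{L^p}$, which are exactly the asserted inequalities (note $C^{k+1}\ge C^k$).

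The proof presents no serious difficulty; the one point that must be handled with a little care --- and the reason a naive ``prove it for $\lambda=1$ and rescale'' does not quite suffice --- is the \emph{uniformity of the constants in $k$}, which the iteration above settles automatically, every step being the $k=1$ estimate with one and the same constant. If one instead preferred a non-iterative proof of the lower bound, one could use the multinomial identity $|\xi|^{2k}=\sum_{|\alpha|=k}\binom{k}{\alpha}\xi^{2\alpha}$, which expresses $u$ as a finite sum of convolutions of the $\partial^\alpha u$ (with $|\alpha|=k$) against the rescaled kernels $\cF^{-1}\!\bigl(|\eta|^{-2k}(-i\eta)^\alpha\psi(\eta)\bigr)$; there one would have to check that $\sum_{|\alpha|=k}\binom{k}{\alpha}\bigl\|\cF^{-1}\bigl(|\eta|^{-2k}(-i\eta)^\alpha\psi\bigr)\bigr\|_{L^1}\le C^{k+1}$, which follows from the standard $(1-\Delta_\eta)^N$-weight argument with any fixed $N>d/2$, using that on the fixed annulus $\Supp\psi$ every $\eta$-derivative of $|\eta|^{-2k}$ and of $(-i\eta)^\alpha$ is bounded by a constant to the power $k$ times a polynomial in $k$. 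All of this is classical; see \cite{B-C-D}, Chapter~2.
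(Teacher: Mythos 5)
Your proof is correct. Note that the paper itself offers no proof of this lemma: it is stated as background material, with the blanket remark at the start of Section \ref{s:tools} that everything there is proved in \cite{B-C-D}, Chapter 2, so there is no internal argument to compare against. Your route is the classical one (convolution with a rescaled cut-off whose Fourier transform equals $1$ on the spectral support, Young's inequality, and scaling of $L^r$ norms under dilation), and the direct-plus-inverse first inequality and the upper Bernstein bound coincide with the textbook treatment. The one genuinely different choice is your handling of the reverse inequality and of the uniformity in $k$: instead of the textbook's one-shot expansion $|\xi|^{2k}=\sum_{|\alpha|=k}\binom{k}{\alpha}\xi^{2\alpha}$, which requires estimating the $L^1$ norms of the kernels $\cF^{-1}\bigl(|\eta|^{-2k}(-i\eta)^\alpha\psi\bigr)$ uniformly enough to get $C^{k+1}$, you iterate the $k=1$ estimates, using that each component of $\nabla^{j}u$ keeps the same annular spectral localization; this makes the $k$-dependence of the constant automatic and eliminates the bookkeeping, at no loss since $(C')^k\le C^{k+1}$ for $C=\max(C',1)$. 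Both arguments are standard and complete; the only microscopic point worth stating explicitly is that $\widehat\theta$ (resp. $\psi$) should equal $1$ on a slightly larger ball (resp. annulus) than the unit one, so that after rescaling it is $1$ on the closed set $\Supp\widehat u$ — a cosmetic adjustment that does not affect anything.
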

We remark explicitly that by previous lemma one has, for any $f\in L^2$,
$$
\|\Delta_{-1}f\|_{L^\infty}\leq C\|\Delta_{-1}f\|_{L^2}\leq C\|f\|_{L^2}.
$$

  One also has the following embedding and interpolation results:
  \begin{prop}\label{c:embed}
  Space $B^{s_1}_{p_1,r_1}$ is continuously embedded in Space $B^{s_2}_{p_2,r_2}$ whenever
  $1\leq p_1\leq p_2\leq\infty$ and
  $$
  s_2< s_1-d/p_1+d/p_2\quad\hbox{or}\quad
  s_2=s_1-d/p_1+d/p_2\ \hbox{ and }\ 1\leq r_1\leq r_2\leq\infty.
  $$
Moreover, one  has the following interpolation inequality:
$$
  \|\varrho\|_{B^{s+1}_{\infty,r}}\,\leq\,C\|\varrho\|_{B^s_{\infty,r}}^{1/2}\, \|\varrho\|_{B^{s+2}_{\infty,r}}^{1/2}\;,
\qquad  \qquad
  \|\nabla\pi \|_{  B^{s-1}_{\infty,r} }\,\leq\,
C\|\nabla\pi \|_{  L^2 }^{\g}\,\|\nabla\pi \|_{ B^s_{\infty,r} }^{1-\g}\quad(0<\g<1)\,.
$$
  \end{prop}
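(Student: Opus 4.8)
The plan is to prove the three statements separately, in each case reducing to the behaviour of the dyadic blocks $\Delta_j$ and using Bernstein's inequalities (Lemma~\ref{lpfond}) together with discrete H\"older/Cauchy--Schwarz inequalities. For the embedding I would estimate the sequence $\big(2^{js_2}\|\Delta_j u\|_{L^{p_2}}\big)_{j\ge-1}$ in $\ell^{r_2}$: for $j\ge0$ the spectrum of $\Delta_j u$ lies in an annulus of size $2^j$, so the first Bernstein inequality gives $2^{js_2}\|\Delta_j u\|_{L^{p_2}}\lesssim 2^{j\beta}\,2^{js_1}\|\Delta_j u\|_{L^{p_1}}$ with $\beta:=s_2-s_1+d/p_1-d/p_2\le0$, while for $j=-1$ the spectrum is contained in a fixed ball and the same holds with the power of $2$ replaced by a constant. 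When $\beta=0$ (the borderline case) the left-hand sequence is dominated termwise by $\big(2^{js_1}\|\Delta_j u\|_{L^{p_1}}\big)_j$ and one concludes via $\ell^{r_1}\hra\ell^{r_2}$, which holds since $r_1\le r_2$; when $\beta<0$ it is $\lesssim 2^{j\beta}\|u\|_{B^{s_1}_{p_1,\infty}}$, and one concludes by taking $\ell^{r_2}$ norms, using $\sum_{j\ge-1}2^{jr_2\beta}<\infty$ and $\|u\|_{B^{s_1}_{p_1,\infty}}\le\|u\|_{B^{s_1}_{p_1,r_1}}$ (with sup in place of the $\ell^{r_2}$ norm if $r_2=\infty$).

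For the first interpolation inequality, since $s+1=\tfrac12 s+\tfrac12(s+2)$ one has, for each $j$, $2^{j(s+1)}\|\Delta_j\vrho\|_{L^\infty}=\big(2^{js}\|\Delta_j\vrho\|_{L^\infty}\big)^{1/2}\big(2^{j(s+2)}\|\Delta_j\vrho\|_{L^\infty}\big)^{1/2}$. Setting $a_j:=2^{js}\|\Delta_j\vrho\|_{L^\infty}$ and $b_j:=2^{j(s+2)}\|\Delta_j\vrho\|_{L^\infty}$, so that $\|\vrho\|_{B^s_{\infty,r}}=\|(a_j)\|_{\ell^r}$ and $\|\vrho\|_{B^{s+2}_{\infty,r}}=\|(b_j)\|_{\ell^r}$, the claim is $\big\|(\sqrt{a_jb_j})_j\big\|_{\ell^r}\le\|(a_j)\|_{\ell^r}^{1/2}\|(b_j)\|_{\ell^r}^{1/2}$, which for $r<\infty$ follows from Cauchy--Schwarz applied to $\sum_j a_j^{r/2}b_j^{r/2}$, and for $r=\infty$ is the trivial $\sup_j\sqrt{a_jb_j}\le(\sup_j a_j)^{1/2}(\sup_j b_j)^{1/2}$.

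For the second interpolation inequality I would split the Littlewood--Paley series of $\|\nabla\pi\|_{B^{s-1}_{\infty,r}}$ at an integer threshold $N\ge0$. On the low part ($j=-1$ and $0\le j<N$) use Bernstein from $L^2$ to $L^\infty$, $\|\Delta_j\nabla\pi\|_{L^\infty}\lesssim 2^{jd/2}\|\Delta_j\nabla\pi\|_{L^2}\lesssim 2^{jd/2}\|\nabla\pi\|_{L^2}$, so that $2^{j(s-1)}\|\Delta_j\nabla\pi\|_{L^\infty}\lesssim 2^{j(s-1+d/2)}\|\nabla\pi\|_{L^2}$, and summing the geometric series — which is increasing, since $s-1+d/2>0$ because $s\ge1$ and $d\ge2$ — this part contributes $\lesssim 2^{N(s-1+d/2)}\|\nabla\pi\|_{L^2}$. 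On the high part ($j\ge N$), factor out one derivative, $2^{j(s-1)}\|\Delta_j\nabla\pi\|_{L^\infty}=2^{-j}\,2^{js}\|\Delta_j\nabla\pi\|_{L^\infty}$, which contributes $\lesssim 2^{-N}\|\nabla\pi\|_{B^s_{\infty,r}}$ in $\ell^r$. Hence $\|\nabla\pi\|_{B^{s-1}_{\infty,r}}\lesssim 2^{N(s-1+d/2)}\|\nabla\pi\|_{L^2}+2^{-N}\|\nabla\pi\|_{B^s_{\infty,r}}$, and choosing $N$ with $2^{N(s+d/2)}\sim\|\nabla\pi\|_{B^s_{\infty,r}}/\|\nabla\pi\|_{L^2}$ — possible precisely when this ratio is $\gtrsim1$ — yields the inequality with $\g=1/(s+d/2)\in(0,1)$; when instead $\|\nabla\pi\|_{B^s_{\infty,r}}\lesssim\|\nabla\pi\|_{L^2}$, the elementary embedding $B^s_{\infty,r}\hra B^{s-1}_{\infty,r}$ gives it at once, and any smaller positive exponent then follows by interpolating once more against that same embedding.

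The whole Proposition is entirely standard, so I do not expect a genuine obstacle: everything rests on Bernstein's inequalities and on summing geometric series or applying discrete Cauchy--Schwarz. The only points that require a little care are, in the last step, that the truncation level $N$ must be a nonnegative integer — which forces one to separate the regime $\|\nabla\pi\|_{B^s_{\infty,r}}\gtrsim\|\nabla\pi\|_{L^2}$, where optimising the threshold produces the sharp exponent $\g=1/(s+d/2)$, from the opposite one, where the trivial embedding already suffices — and, in each of the three arguments, the (harmless but not to be forgotten) separate treatment of the block $\Delta_{-1}$, whose Fourier support is a ball rather than an annulus.
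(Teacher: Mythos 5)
Your proof is correct: the embedding and both interpolation inequalities follow exactly from the dyadic-block and Bernstein arguments you give, including the two points you flag (the separate block $\Delta_{-1}$ and the integer threshold $N$, whose optimisation gives $\gamma=1/(s+d/2)$ — consistent with the exponent $1/(d+2)$ the paper later uses for a half-derivative drop at $s=1$). The paper offers no proof of this proposition, quoting it from \cite{B-C-D}, Chapter 2, and your argument is precisely the standard one used there.
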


 One also has the following classical  commutator estimate: 
 \begin{prop}\label{p:comm}
  If $s>0$, $r\in [1,\infty]$, then there exists a constant $C$ depending only on $d,s,r$ such that
\begin{equation}\label{linest:comm}
\int^t_0\left\|2^{j s}\left\|[\vphi,\Delta_j]\nabla\psi\right\|_{L^\infty}\right\|_{\ell^r}d\tau\;\leq\;
C\Int^t_0\left(\left\|\nabla\vphi\right\|_{L^\infty}\,
\left\|\psi\right\|_{B^{s}_{\infty,r}}\,+\,
\left\|\nabla\vphi\right\|_{B^{s-1}_{\infty,r}}
\,\left\|\nabla\psi\right\|_{L^\infty}\right)d\tau.
\end{equation}
\end{prop}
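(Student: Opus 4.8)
The plan is to combine \emph{Bony's paraproduct decomposition} with the elementary commutator estimate for Fourier multipliers recalled in \cite{B-C-D}. First I would write the commutator as $\cR_j:=[\vphi,\dj]\nabla\psi=\vphi\,\dj\nabla\psi-\dj(\vphi\,\nabla\psi)$ and decompose \emph{both} products in the Bony fashion, namely $\vphi\,\nabla\psi=T_\vphi\nabla\psi+T'_{\nabla\psi}\vphi$ and $\vphi\,\dj\nabla\psi=T_\vphi(\dj\nabla\psi)+T'_{\dj\nabla\psi}\vphi$, with the notation $T'_gf:=T_gf+R(f,g)$. Subtracting then yields the splitting
$$
\cR_j\;=\;[T_\vphi,\dj]\nabla\psi\;+\;T'_{\dj\nabla\psi}\vphi\;-\;\dj\bigl(T'_{\nabla\psi}\vphi\bigr)\;=:\;\cR^1_j+\cR^2_j+\cR^3_j\,.
$$
Since every bound below will be pointwise in the time variable, the integral over $[0,t]$ in \eqref{linest:comm} is recovered by a trivial final integration; the piece $\cR^1$ will produce the first summand on the right-hand side of \eqref{linest:comm}, and $\cR^2,\cR^3$ the second one.

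Next I would treat the ``true commutator'' $\cR^1_j=\sum_k[S_{k-1}\vphi,\dj]\dk\nabla\psi$. Spectral support considerations restrict the sum to $|k-j|\le N_0$ for a fixed integer $N_0$, and for each such term I would use the kernel representation
$$
[S_{k-1}\vphi,\dj]g(x)\;=\;\int_{\R^d}2^{jd}\,\theta\bigl(2^j(x-y)\bigr)\,\bigl(S_{k-1}\vphi(x)-S_{k-1}\vphi(y)\bigr)\,g(y)\,dy\,,
$$
with $\theta\in\cS(\R^d)$ the unit-scale convolution kernel of $\dj$, together with $|S_{k-1}\vphi(x)-S_{k-1}\vphi(y)|\le|x-y|\,\|\nabla S_{k-1}\vphi\|_{L^\infty}\lesssim|x-y|\,\|\nabla\vphi\|_{L^\infty}$ and $\int|z|\,|\theta(z)|\,dz<\infty$, to get $\|[S_{k-1}\vphi,\dj]\dk\nabla\psi\|_{L^\infty}\lesssim 2^{-j}\|\nabla\vphi\|_{L^\infty}\|\dk\nabla\psi\|_{L^\infty}$. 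Multiplying by $2^{js}$, writing $2^{j(s-1)}=2^{(j-k)(s-1)}2^{k(s-1)}$ and taking the $\ell^r_j$ norm — a finite-band convolution of the sequence $(2^{k(s-1)}\|\dk\nabla\psi\|_{L^\infty})_k\in\ell^r$, whose norm is $\sim\|\nabla\psi\|_{B^{s-1}_{\infty,r}}\lesssim\|\psi\|_{B^s_{\infty,r}}$ — then gives $\bigl\|2^{js}\|\cR^1_j\|_{L^\infty}\bigr\|_{\ell^r}\lesssim\|\nabla\vphi\|_{L^\infty}\|\psi\|_{B^s_{\infty,r}}$.

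For $\cR^2$ and $\cR^3$ I would rely on the standard analysis of the action of $\dj$ on paraproducts and remainders, bounding each sub-term by $\|\nabla\psi\|_{L^\infty}$ times a sum of the type $\sum_k 2^{js}\|\dk\vphi\|_{L^\infty}$, which in turn is controlled by $\|\nabla\psi\|_{L^\infty}\,\|\nabla\vphi\|_{B^{s-1}_{\infty,r}}$ thanks to $\|\dk\nabla\vphi\|_{L^\infty}\sim 2^k\|\dk\vphi\|_{L^\infty}$ (Bernstein) and the harmlessness of the lowest-frequency block. The only genuinely non-formal point here — and the step I expect to demand the most care — is the spectral bookkeeping: in the sub-terms $T_{\dj\nabla\psi}\vphi$ (inside $\cR^2$) and $\dj R(\nabla\psi,\vphi)$ (inside $\cR^3$) the summation runs over \emph{high} frequencies $k\gtrsim j$ with no upper cut-off, so one must recognize the relevant piece as a convolution of $(2^{ks}\|\dk\vphi\|_{L^\infty})_k\in\ell^r$ with a kernel of the form $(2^{-ms}\mathbf 1_{\{m\ge -N\}})_m$, which belongs to $\ell^1$ \emph{precisely because $s>0$} — this is exactly where the hypothesis is used. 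The remaining sub-terms, $R(\dj\nabla\psi,\vphi)$ and $\dj T_{\nabla\psi}\vphi$, are finite-band sums and are immediate. Adding the three estimates and integrating in $\tau$ over $[0,t]$ yields \eqref{linest:comm}; the single low-frequency index $j=-1$ and the passages between $\|\nabla\cdot\|_{B^{s-1}_{\infty,r}}$ and $\|\cdot\|_{B^s_{\infty,r}}$ are handled routinely via Bernstein's inequalities, exactly as in \cite{B-C-D}, Chapter~2.
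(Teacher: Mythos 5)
Your overall strategy is the standard one: the paper itself offers no proof of Proposition \ref{p:comm}, relying on the classical commutator lemma of \cite{B-C-D}, Chapter 2, and your splitting $[\vphi,\dj]\nabla\psi=[T_\vphi,\dj]\nabla\psi+T'_{\dj\nabla\psi}\vphi-\dj(T'_{\nabla\psi}\vphi)$, the mean-value/kernel bound $\|[S_{k-1}\vphi,\dj]\dk\nabla\psi\|_{L^\infty}\lesssim 2^{-j}\|\nabla\vphi\|_{L^\infty}\|\dk\nabla\psi\|_{L^\infty}$ on the quasi-diagonal band, and the use of $s>0$ to sum the upper-triangular tails in $T_{\dj\nabla\psi}\vphi$ and $\dj R(\vphi,\nabla\psi)$ are exactly the ingredients of that proof (the time integration in \eqref{linest:comm} is indeed immaterial).

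There is, however, one step that fails as you state it: the claim that the lowest-frequency block of $\vphi$ is harmless ``thanks to Bernstein''. The reverse Bernstein inequality $\|\dk\vphi\|_{L^\infty}\lesssim 2^{-k}\|\dk\nabla\vphi\|_{L^\infty}$ holds only for $k\geq0$ (annulus support) and is false for $k=-1$; more structurally, no bound of $\|\Delta_{-1}\vphi\|_{L^\infty}$ by the right-hand side of \eqref{linest:comm} can exist, because the inequality is invariant under $\vphi\mapsto\vphi+c$ (constants commute with $\dj$ and only $\nabla\vphi$ appears on the right), while $\|\Delta_{-1}(\vphi+c)\|_{L^\infty}\to\infty$ as $c\to\infty$. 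Concretely, the offending pieces are the $k=-1$ terms of the two remainders, namely $\Delta_{-1}\vphi\,\wtilde{\Delta}_{-1}\dj\nabla\psi$ inside $\cR^2_j$ and $\dj\bigl(\Delta_{-1}\vphi\,\wtilde{\Delta}_{-1}\nabla\psi\bigr)$ inside $\cR^3_j$ (present only for the finitely many $j\lesssim 2$, with $\wtilde{\Delta}_{-1}=\Delta_{-2}+\Delta_{-1}+\Delta_0$): taken separately they are \emph{not} controlled by $\nabla\vphi$, so ``bounding each sub-term'' cannot work for them. The standard remedy --- and the one the paper itself employs in the proof of Proposition \ref{p:transp_0}, where the decomposition is written for $\tilde v:=v-\Delta_{-1}v$ and the extra term $[\Delta_{-1}v,\dj]\cdot\nabla\omega_k$ is kept aside --- is to isolate $\Delta_{-1}\vphi$ from the start: apply the Bony machinery only to $\wtilde\vphi:=\vphi-\Delta_{-1}\vphi$, all of whose blocks obey reverse Bernstein, and estimate $[\Delta_{-1}\vphi,\dj]\nabla\psi$ as a genuine commutator via your kernel argument, after noting that only blocks $\dk\nabla\psi$ with $k$ in a fixed band around $j$ (plus finitely many low modes when $j$ itself is small) contribute. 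Equivalently, you may simply recombine the two offending pieces above into the commutator $[\Delta_{-1}\vphi,\dj]\wtilde{\Delta}_{-1}\nabla\psi$ and apply the same lemma. With this correction the rest of your bookkeeping goes through and yields \eqref{linest:comm}.
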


Let us recall the \emph{Bony's paraproduct decomposition} (first introduced in \cite{Bony}):
\begin{equation}\label{eq:bony}
uv\,=\,T_uv\,+\,T_vu\,+R(u,v)\,,
\end{equation}
where we defined the paraproduct operator $T$ and the remainder $R$ as
$$
T_uv:=\sum_j S_{j-1}u\dj v\ \hbox{ and }\
R(u,v):=\sum_j\sum_{|j'-j|\leq1}\dj u\,\Delta_{j'}v\,.
$$
These operators enjoy the following continuity properties in the class of Besov spaces.
\begin{prop}\label{p:prod}
For any $(s,p,r)\in\R\times[1,\infty]^2$ and $t>0$, the paraproduct operator
$T$ maps $L^\infty\times B^s_{p,r}$ in $B^s_{p,r},$
and  $B^{-t}_{\infty,\infty}\times B^s_{p,r}$ in $B^{s-t}_{p,r}.$
Moreover, the following estimates hold:
$$
\|T_uv\|_{B^s_{p,r}}\leq C\|u\|_{L^\infty}\|\nabla v\|_{B^{s-1}_{p,r}}\quad\hbox{and}\quad
\|T_uv\|_{B^{s-t}_{p,r}}\leq C\|u\|_{B^{-t}_{\infty,\infty}}\|\nabla v\|_{B^{s-1}_{p,r}}.
$$
For any $(s_1,p_1,r_1)$ and $(s_2,p_2,r_2)$ in $\R\times[1,\infty]^2$ such that
$s_1+s_2>0,$ $1/p:=1/p_1+1/p_2\leq1$ and $1/r:=1/r_1+1/r_2\leq1$
the remainder operator $R$ maps
$B^{s_1}_{p_1,r_1}\times B^{s_2}_{p_2,r_2}$ in $B^{s_1+s_2}_{p,r}.$
\end{prop}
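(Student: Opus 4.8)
\emph{Proof strategy.} The plan is to establish each of the three continuity statements by the classical quasi-orthogonality argument: estimate a single dyadic block $\Delta_q$ of the output and then take the $\ell^r$-norm in $q$. The only inputs are two ``support lemmas'' and Bernstein's inequality together with discrete H\"older and Young inequalities. First I would record the support facts. Since $S_{j-1}u$ is spectrally supported in a ball $\{|\xi|\lesssim 2^j\}$ while $\Delta_j v$ is supported in an annulus $\{|\xi|\sim 2^j\}$, each term $S_{j-1}u\,\Delta_j v$ of the paraproduct has spectrum in a fixed dilated annulus $2^j\mathcal{C}$; hence there is an integer $N_0$, depending only on $\chi$, such that $\Delta_q\bigl(S_{j-1}u\,\Delta_j v\bigr)\equiv0$ whenever $|q-j|>N_0$. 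By contrast, a term $\Delta_j u\,\Delta_{j'}v$ of the remainder with $|j-j'|\le1$ has spectrum only in a ball $\{|\xi|\lesssim 2^j\}$, so there is an integer $N_1$ with $\Delta_q\bigl(\Delta_j u\,\Delta_{j'}v\bigr)\equiv0$ unless $j\ge q-N_1$. I would also use $\|S_{j-1}u\|_{L^\infty}\lesssim\|u\|_{L^\infty}$ (the convolution kernel of $S_{j-1}$ has $L^1$-norm independent of $j$) and, for $t>0$, $\|S_{j-1}u\|_{L^\infty}\le\sum_{k\le j-2}\|\Delta_k u\|_{L^\infty}\lesssim\|u\|_{B^{-t}_{\infty,\infty}}\sum_{k\le j-2}2^{kt}\lesssim 2^{jt}\|u\|_{B^{-t}_{\infty,\infty}}$, the geometric sum converging precisely because $t>0$.

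For the paraproduct estimates I would note that the sum defining $T_uv$ only runs over $j\ge2$, so Bernstein's inequality (Lemma \ref{lpfond}) gives $\|\Delta_j v\|_{L^p}\le C2^{-j}\|\Delta_j\nabla v\|_{L^p}$ for every such $j$. Combining this with the bounds on $S_{j-1}u$ and the support lemma, for any $q$ one gets $2^{qs}\|\Delta_q(T_uv)\|_{L^p}\lesssim\|u\|_{L^\infty}\sum_{|q-j|\le N_0}2^{(q-j)s}\,2^{j(s-1)}\|\Delta_j\nabla v\|_{L^p}$ in the first case, and the same bound with $s$ replaced by $s-t$ in the outer weight and an extra harmless factor $2^{jt}$ absorbed in the second case. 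Taking the $\ell^r$-norm in $q$ and applying Young's inequality for the convolution with the finitely supported sequence $(2^{ks})_{|k|\le N_0}$ (resp. $(2^{k(s-t)})_{|k|\le N_0}$) then yields $\|T_uv\|_{B^s_{p,r}}\lesssim\|u\|_{L^\infty}\|\nabla v\|_{B^{s-1}_{p,r}}$ and $\|T_uv\|_{B^{s-t}_{p,r}}\lesssim\|u\|_{B^{-t}_{\infty,\infty}}\|\nabla v\|_{B^{s-1}_{p,r}}$, which are the two claimed paraproduct bounds; the mapping properties follow.

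For the remainder I would write $\Delta_q R(u,v)=\sum_{j\ge q-N_1}\Delta_q\bigl(\Delta_j u\sum_{|j'-j|\le1}\Delta_{j'}v\bigr)$ and apply H\"older's inequality with $1/p=1/p_1+1/p_2\le1$ to obtain $\|\Delta_q R(u,v)\|_{L^p}\lesssim\sum_{j\ge q-N_1}\|\Delta_j u\|_{L^{p_1}}\sum_{|j'-j|\le1}\|\Delta_{j'}v\|_{L^{p_2}}$. Setting $c_j:=2^{js_1}\|\Delta_j u\|_{L^{p_1}}$, which lies in $\ell^{r_1}$ with norm $\|u\|_{B^{s_1}_{p_1,r_1}}$, and $\tilde d_j:=\sum_{|j'-j|\le1}2^{j's_2}\|\Delta_{j'}v\|_{L^{p_2}}$, which lies in $\ell^{r_2}$ with norm $\lesssim\|v\|_{B^{s_2}_{p_2,r_2}}$, this turns into $2^{q(s_1+s_2)}\|\Delta_q R(u,v)\|_{L^p}\lesssim\sum_{j\ge q-N_1}2^{(q-j)(s_1+s_2)}c_j\tilde d_j$. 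Since $s_1+s_2>0$, the sequence $k\mapsto 2^{-k(s_1+s_2)}\mathbf 1_{\{k\ge -N_1\}}$ belongs to $\ell^1$; the discrete H\"older inequality gives $(c_j\tilde d_j)\in\ell^r$ with $1/r=1/r_1+1/r_2\le1$ and $\|(c_j\tilde d_j)\|_{\ell^r}\le\|c\|_{\ell^{r_1}}\|\tilde d\|_{\ell^{r_2}}$, and Young's inequality $\ell^1\ast\ell^r\hookrightarrow\ell^r$ then delivers $\|R(u,v)\|_{B^{s_1+s_2}_{p,r}}\lesssim\|u\|_{B^{s_1}_{p_1,r_1}}\|v\|_{B^{s_2}_{p_2,r_2}}$.

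The one genuinely delicate point — which I expect to be the main obstacle — is the summation over the \emph{high} frequencies $j\ge q$ in the remainder term: unlike the paraproduct, the spectra of the remainder terms are trapped only in a ball, not an annulus around $2^q$, so one must sum infinitely many dyadic contributions, and their summability in $j-q$ is exactly the content of the hypothesis $s_1+s_2>0$. (It is the failure of this summability at the borderline $s_1+s_2=0$ that forces logarithmic corrections elsewhere; here we do not meet that case.) Everything else — the support/quasi-orthogonality lemmas, Bernstein's inequality, and the discrete H\"older and Young inequalities — is routine.
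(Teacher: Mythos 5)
Your proof is correct: the dyadic-block estimates, the support/quasi-orthogonality facts, the reverse Bernstein inequality to pass from $v$ to $\nabla v$, and the $\ell^1\ast\ell^r$ Young argument (with summability of the high-frequency tail exactly from $s_1+s_2>0$) are all in order. The paper itself gives no proof of Proposition \ref{p:prod} but refers to Chapter 2 of \cite{B-C-D}, and your argument is essentially the standard one found there, so there is nothing further to reconcile.
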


When solving evolutionary PDEs in Besov spaces, we have to localize the equations by Littlewood-Paley decomposition. So we will have
estimates for the Lebesgue norm of  each dyadic block \emph{before}
performing integration in time. This leads to the following  definition, introduced for the first time in paper \cite{Chemin-Lerner}
by Chemin and Lerner.
\begin{defin}\label{def:Besov,tilde}
For  $s\in \R$, $(q,p,r)\in [1,+\infty]^3$ and $T\in [0,+\infty]$, we set
$$
\|u\|_{\tilde L^q_T(B^s_{p,r})}\,:=\,\Bigl\| \Bigl(  2^{js}  \|\dj u(t)\|_{L^q_T(L^p)} \Bigr)_{j\geq -1}\Bigr\|_{\ell^r}\,.
$$
We also set $\tilde C_T(B^s_{p,r})=\tilde L_T^\infty(B^s_{p,r})\cap C([0,T];B^s_{p,r})$.
\end{defin}
The relation between these classes and the classical $L^q_T(B^s_{p,r})$ can be easily recovered by Minkowski's inequality:
$$
\left\{\begin{array}{lcl}
        \|u\|_{\wtilde{L}^q_T(B^s_{p,r})}\;\leq\;\|u\|_{L^q_T(B^s_{p,r})} & \mbox{ if } & q\,\leq\,r \\[1ex]
\|u\|_{\wtilde{L}^q_T(B^s_{p,r})}\;\geq\;\|u\|_{L^q_T(B^s_{p,r})} & \mbox{ if } & q\,\geq\,r\,.
       \end{array}\right.
$$

Combining the above proposition \ref{p:prod} with Bony's decomposition \eqref{eq:bony},
we easily get the following product estimate in Chemin-Lerner space:
\begin{coroll}\label{c:op}
There exists a constant $C$ depending only on $d, s, p, r  $
such that 
$$
\|uv\|_{\tilde L^q_T(B^s_{p,r})}
\leq C\left(\|u\|_{L^{q_1}_T(L^\infty)}\|v\|_{\tilde L^{q_2}_T(B^{s}_{p,r})}
\,+\, \|u\|_{\tilde L^{q_3}_T(B^{s}_{p,r})} \| v\|_{L^{q_4}_T(L^\infty)}\right),
\quad  \frac 1q:=\frac{1}{q_1}  +\frac{1}{q_2}=\frac{1}{q_3} +\frac{1}{q_4}.
$$
\end{coroll}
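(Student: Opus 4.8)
The plan is to run the classical Bony-decomposition proof of the product rule, but carried out \emph{dyadic block by dyadic block} with the time integration kept inside — which is precisely the situation the Chemin--Lerner spaces of Definition \ref{def:Besov,tilde} are designed for. I would start from \eqref{eq:bony},
$$
uv\,=\,T_uv\,+\,T_vu\,+\,R(u,v)\,,
$$
and bound the three pieces separately in $\tilde L^q_T(B^s_{p,r})$. One cannot simply apply Proposition \ref{p:prod} at fixed time and then integrate: the three time exponents are \emph{coupled} through $1/q=1/q_1+1/q_2=1/q_3+1/q_4$, and Minkowski's inequality would send $L^q_T(B^s_{p,r})$ into $\tilde L^q_T(B^s_{p,r})$ in the wrong direction whenever $q>r$. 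So the Littlewood--Paley argument has to be redone with $L^q_T$-norms inserted at the level of each block.

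For the paraproduct $T_uv=\sum_jS_{j-1}u\,\Delta_jv$ I would use that $S_{j-1}u\,\Delta_jv$ is spectrally supported in an annulus of size $2^j$, so that $\Delta_k(T_uv)=\sum_{|j-k|\le N_0}\Delta_k(S_{j-1}u\,\Delta_jv)$ for a fixed $N_0$. Combining the uniform boundedness of $S_{j-1}$ on $L^\infty$ and of $\Delta_k$ on $L^p$ with H\"older's inequality in time ($1/q=1/q_1+1/q_2$), one gets
$$
2^{ks}\bigl\|\Delta_k(T_uv)\bigr\|_{L^q_T(L^p)}\;\lesssim\;\|u\|_{L^{q_1}_T(L^\infty)}\sum_{|j-k|\le N_0}2^{js}\,\|\Delta_jv\|_{L^{q_2}_T(L^p)}\,,
$$
and the $\ell^r$-norm in $k$ of this finite-band sum yields $\|T_uv\|_{\tilde L^q_T(B^s_{p,r})}\lesssim\|u\|_{L^{q_1}_T(L^\infty)}\|v\|_{\tilde L^{q_2}_T(B^s_{p,r})}$ for every $s\in\R$. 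The term $T_vu$ is handled symmetrically, producing the second summand $\|u\|_{\tilde L^{q_3}_T(B^s_{p,r})}\|v\|_{L^{q_4}_T(L^\infty)}$.

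For the remainder $R(u,v)=\sum_j\Delta_ju\,\widetilde\Delta_jv$ (with $\widetilde\Delta_j=\Delta_{j-1}+\Delta_j+\Delta_{j+1}$) I would instead exploit that each $\Delta_ju\,\widetilde\Delta_jv$ is supported in a \emph{ball} of size $2^j$, so only the indices $j\gtrsim k$ contribute to $\Delta_k(R(u,v))$; then, as above,
$$
2^{ks}\bigl\|\Delta_k(R(u,v))\bigr\|_{L^q_T(L^p)}\;\lesssim\;\|u\|_{L^{q_1}_T(L^\infty)}\sum_{j\ge k-N_1}2^{(k-j)s}\Bigl(2^{js}\,\|\widetilde\Delta_jv\|_{L^{q_2}_T(L^p)}\Bigr)\,,
$$
which is a convolution in $k$ against the sequence $m\mapsto 2^{-ms}\mathbf{1}_{\{m\ge-N_1\}}$. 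This sequence lies in $\ell^1(\Z)$ exactly because $s>0$ — a condition always in force here by \eqref{eq:cond_index} — so Young's inequality for series gives $\|R(u,v)\|_{\tilde L^q_T(B^s_{p,r})}\lesssim\|u\|_{L^{q_1}_T(L^\infty)}\|v\|_{\tilde L^{q_2}_T(B^s_{p,r})}$. Summing the three estimates proves the claim, and these same bounds justify a posteriori the convergence of all the series in $\cS'$.

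I do not expect a genuine obstacle: the argument is a routine adaptation of the classical $B^s_{p,r}$ product rule. The only two points needing a little care are (i) placing the time integration inside each dyadic block rather than integrating a fixed-time estimate, which is what makes the two Chemin--Lerner norms appear on the right with the prescribed coupling of the exponents, and (ii) the remainder $R(u,v)$, which — not being a finite-band sum like the paraproducts — genuinely requires $s>0$ and a Young-type summation over frequencies rather than a bare triangle inequality.
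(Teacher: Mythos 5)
Your proposal is correct and follows essentially the same route the paper intends: Bony's decomposition \eqref{eq:bony} combined with the continuity properties behind Proposition \ref{p:prod}, carried out block by block with H\"older's inequality in time inserted inside each dyadic piece so that the Chemin--Lerner norms appear with the stated coupling of exponents (the paper simply asserts this as an easy consequence and omits the details). Your observations that the fixed-time estimate cannot just be integrated when $q>r$, and that the remainder term is where $s>0$ is genuinely used, are exactly the right points of care.
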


One also has the estimates for the composition of functions in Besov spaces.
\begin{prop}\label{p:comp_grad}
Let  $F:\R\rightarrow\R$ be a smooth function.
Then for any   $s>0$, $(q,p,r)\in[1,+\infty]^3$,   we have
$$
\|\nabla(F(a))\|_{\tilde L^q_T(B^{s-1}_{p,r})}\leq C\|\nabla a\|_{\tilde L^q_T(B^{s-1}_{p,r})}.
$$
If furthermore   $F(0)=0$, then
$   \left\|F(a)\right\|_{\tilde L^q_T(B^s_{p,r})}\,\leq\,C\,\|a\|_{\tilde L^q_T(B^s_{p,r})}\,.
$ 
\end{prop}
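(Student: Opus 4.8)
The plan is to prove both estimates by \emph{Meyer's first linearization method}, and to observe that the Chemin--Lerner ($\tilde L^q_T$) statements follow at once from their time-independent counterparts, by taking the $L^q_T$ norm in time \emph{before} the $\ell^r$ summation in frequency. At the outset I note that the constant $C$ is allowed to depend also on $F$ and on $\|a\|_{L^\infty_T(L^\infty)}$; this is harmless in all the applications of the paper, since there $a$ is a fixed smooth function of the density $\rho\in[\rho_\ast,\rho^\ast]$.

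First I would fix $t$ and, using $S_0a\equiv0$ and $F(0)=0$, write the telescoping identity
$$
F(a)=\sum_{j\ge0}\bigl(F(S_{j+1}a)-F(S_ja)\bigr)=\sum_{j\ge0}m_j\,\Delta_ja\,,\qquad m_j:=\int_0^1F'\bigl(S_ja+\tau\,\Delta_ja\bigr)\,d\tau\,.
$$
The crucial ingredient is the bound $\|\nabla^\ell m_j\|_{L^\infty}\le C_\ell\,2^{j\ell}$ for every $\ell\ge0$, which I would get from the Fa\`a di Bruno formula combined with the elementary estimate $\|\nabla^k S_ja\|_{L^\infty}\lesssim 2^{jk}\|a\|_{L^\infty}$ (Bernstein's inequality, Lemma \ref{lpfond}, together with $\|\Delta_{j'}a\|_{L^\infty}\lesssim\|a\|_{L^\infty}$). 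By Leibniz' rule this yields $\|m_j\Delta_ja\|_{L^p}\lesssim\|\Delta_ja\|_{L^p}$ and $\|\nabla^N(m_j\Delta_ja)\|_{L^p}\lesssim 2^{jN}\|\Delta_ja\|_{L^p}$ for all $N\in\N$, whence, applying $\Delta_q$ and Bernstein once more,
$$
\|\Delta_q(m_j\Delta_ja)\|_{L^p}\lesssim\min\bigl(1,\,2^{(j-q)N}\bigr)\,\|\Delta_ja\|_{L^p}\,.
$$
Choosing $N>s$, multiplying by $2^{qs}$ and summing over $j\ge0$ (splitting into $j\ge q$, where one uses $s>0$, and $j<q$, where one uses $N>s$), the quantity $2^{qs}\|\Delta_qF(a)\|_{L^p}$ is dominated by a discrete convolution $(\gamma\ast c)_q$, with $c_j:=2^{js}\|\Delta_ja\|_{L^p}$ and $\gamma\in\ell^1(\Z)$; taking the $\ell^r$ norm and using Young's inequality for series gives $\|F(a)\|_{B^s_{p,r}}\le C\|a\|_{B^s_{p,r}}$.

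For the gradient bound I would run the same scheme on $w_j:=\nabla(m_j\Delta_ja)$, so that $\nabla F(a)=\sum_{j\ge0}w_j$. Using $\Delta_q\nabla=\nabla\Delta_q$, Bernstein, and the bounds on $m_j$ above, one obtains $\|\Delta_qw_j\|_{L^p}\lesssim 2^q\|\Delta_ja\|_{L^p}$ when $q\le j$ and $\|\Delta_qw_j\|_{L^p}\lesssim 2^{(j-q)N}2^{j}\|\Delta_ja\|_{L^p}$ when $q>j$. Multiplying by $2^{q(s-1)}$ and summing over $j\ge0$, the two pieces again combine into a convolution with an $\ell^1$ kernel provided $s>0$ and $N>s-1$, so that $\|\nabla F(a)\|_{B^{s-1}_{p,r}}\le C\,\bigl\|(2^{js}\|\Delta_ja\|_{L^p})_{j\ge0}\bigr\|_{\ell^r}$; a last application of Bernstein ($\|\Delta_ja\|_{L^p}\lesssim 2^{-j}\|\Delta_j\nabla a\|_{L^p}$ for $j\ge0$) converts the right-hand side into $C\|\nabla a\|_{B^{s-1}_{p,r}}$. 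Here no hypothesis on $F(0)$ is required, since only $\nabla a$ appears; and since the telescoping sum starts at $j\ge0$ the low-frequency block is controlled by the $q\le j$ estimate, so the possibly negative index $s-1$ causes no trouble. Finally, the passage to the Chemin--Lerner norms is immediate: all the inequalities above are pointwise in $t$ with a constant uniform in $t$, so it suffices to take the $L^q_T$ norm in time of the dyadic bound $2^{q\sigma}\|\Delta_q(\cdot)(t)\|_{L^p}\lesssim\sum_j\gamma(q-j)\,2^{j\sigma'}\|\Delta_ja(t)\|_{L^p}$ before taking the $\ell^r$ norm in $q$, and then apply Minkowski's inequality and Young's inequality for series.

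The step I expect to be the main obstacle is the uniform control $\|\nabla^\ell m_j\|_{L^\infty}\lesssim 2^{j\ell}$ and its consequence $\|\nabla^N(m_j\Delta_ja)\|_{L^p}\lesssim 2^{jN}\|\Delta_ja\|_{L^p}$: this is exactly where the smoothness of $F$ and the $L^\infty$ bound on $a$ enter, and it is what makes the building block $m_j\Delta_ja$ behave, as far as frequencies $\gtrsim 2^j$ are concerned, like a function spectrally localized near $|\xi|\sim 2^j$. Once this quasi-localization is secured, the rest is routine summation of geometric series, for which the only structural requirement is $s>0$. (Both estimates can alternatively be quoted from \cite{B-C-D}, Chapter~2.)
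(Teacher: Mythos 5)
The paper offers no proof of this proposition: it is recalled from \cite{B-C-D}, Chapter 2, and the scheme you propose (Meyer's first linearization into the blocks $m_j\Delta_j a$, quasi-localization via the bounds $\|\nabla^\ell m_j\|_{L^\infty}\lesssim 2^{j\ell}$, summation of a convolution with an $\ell^1$ kernel, and passage to the Chemin--Lerner norms by taking the $L^q_T$ norm of the dyadic estimates before the $\ell^r$ summation) is precisely the standard argument behind that citation, including the admissible dependence of $C$ on $F$ and on $\|a\|_{L^\infty_T(L^\infty)}$. For the second estimate your argument is complete up to a trivial re-indexing.

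There is, however, one concrete slip in the gradient estimate, located exactly at the low-frequency block. With the paper's nonhomogeneous operators one has $S_{j+1}a-S_ja=\Delta_ja$ for $j\ge-1$ (setting $S_{-1}a:=0$, since $S_0=\chi(D)=\Delta_{-1}$), so the telescoping sum necessarily contains the increment $m_{-1}\Delta_{-1}a$, i.e.\ the block $\Delta_{-1}a$ with no derivative on it; if instead you literally impose $S_0a\equiv 0$, then $S_1a-S_0a=\Delta_{-1}a+\Delta_0a$, the identity ``increment $=m_j\Delta_ja$'' fails at the first step, and $\Delta_{-1}a$ reappears there anyway. Your closing remark that ``the low-frequency block is controlled by the $q\le j$ estimate'' concerns the output frequencies $q$, not this input block, and the final conversion $\|\Delta_ja\|_{L^p}\lesssim 2^{-j}\|\Delta_j\nabla a\|_{L^p}$ is a reverse Bernstein inequality valid only for spectra in annuli, i.e.\ for $j\ge0$; it is false for $j=-1$, so as written the right-hand side $\|\nabla a\|_{\tilde L^q_T(B^{s-1}_{p,r})}$ does not control the first increment. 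The gap is easily filled: treat the $j=-1$ increment by hand, writing
$$
\nabla\bigl(m_{-1}\Delta_{-1}a\bigr)\,=\,m_{-1}\,\Delta_{-1}\nabla a\,+\,\Bigl(\int_0^1\tau\,F''(\tau\Delta_{-1}a)\,d\tau\Bigr)\,\Delta_{-1}a\;\Delta_{-1}\nabla a\,,
$$
and observe that both terms are of the form (function bounded in $L^\infty$, together with all its derivatives, uniformly in $t$ by constants depending on $F$ and $\|a\|_{L^\infty_T(L^\infty)}$) times $\Delta_{-1}\nabla a$; your own quasi-localization argument then bounds their $B^{s-1}_{p,r}$ norm by $C\,\|\Delta_{-1}\nabla a\|_{L^q_T(L^p)}\lesssim\|\nabla a\|_{\tilde L^q_T(B^{s-1}_{p,r})}$, while for the $j\ge0$ increments your estimates are correct. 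With this correction (and, for the second estimate, simply starting the telescoping at $j=-1$, where $\|\Delta_{-1}a\|_{L^p}\lesssim\|a\|_{B^s_{p,r}}$ suffices), the proof is complete and matches the quoted reference.
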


\medbreak
In the next section we will need also some notions about \emph{homogeneous paradifferential calculus}: let us recall them.

The homogeneous dyadic blocks $(\dot\Delta_j)_{j\in\Z}$ are defined by
$$
\dot\Delta_j:=\varphi(2^{-j}D)\qquad \text{ if }\quad  j\in \Z.
$$
The homogeneous low frequency cut-off are defined by:
$$
\dot S_ju:=\chi(2^{-j}D)u \qquad \mbox{ for }\quad j\in \Z.
$$
The homogeneous paraproduct operator and remainder operator are defined by:
$$
\dot T_uv:=\sum_{j\in \Z} \dot S_{j-1}u\ddj v\qquad \mbox{ and }\qquad
\dot R(u,v):=\sum_{j\in \Z}\sum_{|j'-j|\leq1}\ddj u\,\dot\Delta_{j'}v.
$$
Notice that, for all $u$ and $v$ in $\cS'$, the sequence $(\dot S_{j-1}u\, \ddj v)_{j\in\Z}$ is spectrally
supported in dyadic annuli. The analogous of Proposition \ref{p:prod} holds true also in the homogeneous setting.

Finally, let us set $\dot C^s=\dot B^s_{\infty,\infty}$, for $s>0$, to be the homogeneous H\"older space.
Recall that, for any $u\in\dot C^s$, the equality $u=\sum_{j\in \Z}\ddj u$ holds.
For homogeneous H\"{o}lder spaces and time-dependent homogeneous H\"{o}lder spaces, we have the following characterization.
\begin{prop}\label{prop:space,equiv}
$\forall \epsilon\in (0,1)$, there exists a constant $C$ such that for all $u\in \cS$,
\begin{equation}\label{equiv:space}
C^{-1} \|u\|_{ \dot C^\epsilon }
\leq \left\|  \frac{\|u(x+y)-u(x)\|_{L^\infty_x}}{|y|^\epsilon}\right\|_{L^\infty_y}
\leq C \|u\|_{ \dot C^\epsilon },
\end{equation}
and
\begin{equation}\label{equiv:space,time}
C^{-1} \|u\|_{\tilde L^1_t(\dot C^\epsilon)}
\leq \left\|\Int^t_0 \frac{\|u(\tau,x+y)-u(\tau,x)\|_{L^\infty_x}}{|y|^\epsilon}\, d\tau\right\|_{L^\infty_y}
\leq C \|u\|_{\tilde L^1_t(\dot C^\epsilon)}.
\end{equation}
\end{prop}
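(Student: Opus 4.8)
The plan is to prove the two equivalences by passing through the Littlewood--Paley characterization of $\dot C^\epsilon = \dot B^\epsilon_{\infty,\infty}$, splitting each difference $u(x+y)-u(x)$ into low- and high-frequency parts with a frequency threshold dictated by $|y|$. Concretely, for a fixed $y\ne 0$ set $N=N(y)\in\Z$ to be (roughly) $-\log_2|y|$, and write $u=\sum_{j<N}\ddj u+\sum_{j\ge N}\ddj u$. For the low frequencies one uses the mean value inequality together with Bernstein: $\|\ddj u(\cdot+y)-\ddj u(\cdot)\|_{L^\infty}\le |y|\,\|\nabla\ddj u\|_{L^\infty}\lesssim |y|\,2^{j}\,\|\ddj u\|_{L^\infty}\le |y|\,2^{j(1-\epsilon)}\,\|u\|_{\dot C^\epsilon}$, and summing the geometric series in $j<N$ gives a bound $\lesssim |y|\,2^{N(1-\epsilon)}\|u\|_{\dot C^\epsilon}\sim |y|^\epsilon\|u\|_{\dot C^\epsilon}$. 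For the high frequencies one simply uses the triangle inequality $\|\ddj u(\cdot+y)-\ddj u(\cdot)\|_{L^\infty}\le 2\|\ddj u\|_{L^\infty}\le 2\cdot 2^{-j\epsilon}\|u\|_{\dot C^\epsilon}$ and sums the geometric series in $j\ge N$, obtaining $\lesssim 2^{-N\epsilon}\|u\|_{\dot C^\epsilon}\sim |y|^\epsilon\|u\|_{\dot C^\epsilon}$. Adding the two contributions and dividing by $|y|^\epsilon$, then taking the supremum over $y$, yields the right-hand inequality in \eqref{equiv:space}.

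For the reverse (left-hand) inequality in \eqref{equiv:space}, I would recover $\|\ddj u\|_{L^\infty}\lesssim 2^{-j\epsilon}\sup_y |y|^{-\epsilon}\|u(\cdot+y)-u(\cdot)\|_{L^\infty}$ for each $j$. The standard device is to write $\ddj u = \varphi(2^{-j}D)u$ as convolution with a kernel $2^{jd}\check\varphi(2^j\cdot)$ having \emph{zero integral} (because $\varphi$ vanishes near the origin); hence $\ddj u(x)=\int 2^{jd}\check\varphi(2^j z)\,\bigl(u(x-z)-u(x)\bigr)\,dz$. Estimating $|u(x-z)-u(x)|\le |z|^\epsilon\,\bigl(\sup_y|y|^{-\epsilon}\|u(\cdot+y)-u(\cdot)\|_{L^\infty}\bigr)$ and using $\int 2^{jd}|\check\varphi(2^jz)|\,|z|^\epsilon\,dz\sim 2^{-j\epsilon}\int|w|^\epsilon|\check\varphi(w)|\,dw\lesssim 2^{-j\epsilon}$ gives exactly the desired bound, uniformly in $j$; taking the supremum in $j$ produces $\|u\|_{\dot C^\epsilon}$ on the left. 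This establishes \eqref{equiv:space}.

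For the time-dependent statement \eqref{equiv:space,time}, the same two chains of inequalities are applied pointwise in the time variable $\tau$ and then integrated. The key observation is that all the estimates above hold \emph{dyadic block by dyadic block}: for each fixed $j$ one has the pointwise-in-$\tau$ bounds $\|\ddj u(\tau)(\cdot+y)-\ddj u(\tau)(\cdot)\|_{L^\infty}\le C\min(|y|\,2^{j},1)\,2^{-j\epsilon}\cdot(\hbox{something controlled by }\|\ddj u(\tau)\|_{L^\infty}2^{j\epsilon})$, so that after choosing $N\sim-\log_2|y|$, summing the geometric series, integrating in $\tau$ over $[0,t]$, and taking $\sup_y$, one lands on $\|u\|_{\tilde L^1_t(\dot C^\epsilon)}=\sum_j 2^{j\epsilon}\|\ddj u\|_{L^1_t(L^\infty)}$ (the $\ell^\infty$-type sup over $j$ has been turned into the $\ell^\infty$ norm appearing in the $\dot C^\epsilon$ definition — more precisely one keeps the $\sup_j$ throughout). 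The reverse direction again uses the zero-mean convolution kernel representation of $\ddj u(\tau)$, estimate pointwise in $\tau$, integrate, and sup over $j$. The one point requiring a little care — and the main technical obstacle — is the bookkeeping of the frequency threshold $N(y)$ and making sure the splitting is done \emph{before} integrating in $\tau$ so that the Chemin--Lerner $\tilde L^1_t$ norm (which takes the $\ell^\infty_j$ norm outside the time integral) is correctly reproduced; this is precisely the reason the $\tilde L^1$ space, rather than $L^1_t(\dot C^\epsilon)$, appears in \eqref{equiv:space,time}.
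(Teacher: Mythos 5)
Your proof is correct and takes essentially the same route as the paper: for the stationary equivalence the paper simply cites \cite{B-C-D} (page 75), whose classical proof is precisely your low/high-frequency splitting plus the zero-mean kernel representation, and for the time-dependent one the paper writes out exactly your kernel argument integrated in $\tau$ for the left-hand inequality and dispatches the right-hand one as ``the classical proof with similar changes in time'', i.e.\ your block-by-block splitting done pointwise in $\tau$ before integrating. The only slip is the identity $\|u\|_{\tilde L^1_t(\dot C^\epsilon)}=\sum_j 2^{j\epsilon}\|\ddj u\|_{L^1_t(L^\infty)}$, which should be a $\sup_j$ (as your own parenthetical remark acknowledges); the correct bookkeeping in the right-hand inequality is to pull this $\sup_j$ out and sum the weights $\min(|y|2^j,1)\,2^{-j\epsilon}\sim|y|^\epsilon$.
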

\begin{proof} The proof of \eqref{equiv:space} can be found at page 75 of \cite{B-C-D}.

Let us just show the left-hand
inequality of \eqref{equiv:space,time}. Since
$$
\ddj u(t,x)=2^{jd}\Int_{\R^d} h(2^j y) (u(t,x-y) - u(t,x))\, dy,
$$
then we easily find
\begin{align*}
\Int^t_0 2^{j\epsilon} \|\ddj u(\tau,\cdot)\|_{L^\infty_x}
&\leq 2^{jd} \Int_{\R^d} 2^{j\epsilon} |y|^\epsilon | h(2^j y) |
     \Int^t_0 \frac{ \| u(\tau,x-y) - u(\tau,x) \|_{L^\infty_x}}{|y|^\epsilon}\,d\tau\, dy \\
&\leq C\left\|\Int^t_0 \frac{\|u(\tau,x+y)-u(\tau,x)\|_{L^\infty_x}}{|y|^\epsilon}\right\|_{L^\infty_y}.
\end{align*}
This relation gives us the left-hand side inequality  of \eqref{equiv:space,time}.

The inverse inequality follows immediately after similar changes with respect to time in the classical proof.
\end{proof}

Recall the classical a priori estimates for heat equations   in homogeneous Besov spaces:
\begin{prop}\label{prop:holder,est}
For any $s\in \R$, there exists a constant $C_0$ such that
\begin{equation}\label{holder-heat}
\|F\|_{\wtilde L^\infty_T(\dot C^s)\cap \tilde L^1_T(\dot C^{2+s})}
\leq C_0(\|f_0\|_{\dot C^s}+\|f\|_{\tilde L^1_T(\dot C^s)}),
\end{equation}
where $f_0,f,F\in \cS(\R^d)$ and they are linked by the relation
$$F(t,x)=  e^{t \Delta}f_0  + \Int^t_0 e^{(t-\tau)\Delta}f(\tau)\, d\tau.$$
\end{prop}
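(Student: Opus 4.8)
The plan is to localize the Duhamel formula in frequency with the homogeneous Littlewood--Paley decomposition and to exploit the dissipative smoothing of the heat semigroup on dyadic annuli. Since here the third Besov index is $r=\infty$, all the operations over $j$ reduce to suprema, hence commute trivially with the time integrations; this is precisely what keeps the argument clean.

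The essential ingredient I would isolate first is the following uniform estimate: there are constants $c>0$, $C>0$, depending only on $d$ and on the fixed annulus $\cC=\{\xi\in\R^d:3/4\leq|\xi|\leq 8/3\}$ (which contains $\Supp\varphi$), such that for every $j\in\Z$, every tempered distribution $u$ with $\Supp\widehat u\subset 2^j\cC$, every $t\geq 0$ and every $p\in[1,\infty]$,
$$\|e^{t\Delta}u\|_{L^p}\,\leq\,C\,e^{-c\,t\,2^{2j}}\,\|u\|_{L^p}.$$
To prove it, write $e^{t\Delta}u=g_{t,j}*u$ with $\widehat{g_{t,j}}(\xi)=e^{-t|\xi|^2}\psi(2^{-j}\xi)$, where $\psi\in C^\infty_c$ is $\equiv 1$ on $\cC$ and supported in a slightly larger annulus; by Young's inequality it suffices to bound $\|g_{t,j}\|_{L^1}$. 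A dilation reduces this to $\|g_{\tau,0}\|_{L^1}$ with $\tau=t\,2^{2j}$, and repeated integrations by parts in the Fourier integral --- using that $|\xi|\sim 1$ on $\Supp\psi$, whence $e^{-\tau|\xi|^2}\leq e^{-c\tau}$ there --- yield $\|g_{\tau,0}\|_{L^1}\leq C e^{-c\tau}$. (This is essentially Lemma~2.4 of \cite{B-C-D}.)

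Granting this, I would apply $\ddj$ to the Duhamel representation, obtaining $\ddj F(t)=e^{t\Delta}\ddj f_0+\int_0^t e^{(t-\tau)\Delta}\ddj f(\tau)\,d\tau$; since $\ddj f_0$ and $\ddj f(\tau)$ have spectrum in $2^j\cC$, the estimate above gives, for $t\in[0,T]$,
$$\|\ddj F(t)\|_{L^\infty}\leq C e^{-ct2^{2j}}\|\ddj f_0\|_{L^\infty}+C\int_0^t e^{-c(t-\tau)2^{2j}}\|\ddj f(\tau)\|_{L^\infty}\,d\tau.$$
For the $\wtilde L^\infty_T(\dot C^s)$ bound I multiply by $2^{js}$, bound each exponential by $1$ (and use Young's inequality in time on the convolution), then take $\sup_{t\leq T}$ and $\sup_{j\in\Z}$, which gives $\|F\|_{\wtilde L^\infty_T(\dot C^s)}\lesssim\|f_0\|_{\dot C^s}+\|f\|_{\wtilde L^1_T(\dot C^s)}$. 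For the $\wtilde L^1_T(\dot C^{2+s})$ bound I multiply by $2^{j(s+2)}$ and integrate over $t\in[0,T]$: the first term is controlled via $\int_0^T e^{-ct2^{2j}}\,dt\leq c^{-1}2^{-2j}$, and the second via Fubini together with $\int_\tau^T e^{-c(t-\tau)2^{2j}}\,dt\leq c^{-1}2^{-2j}$, so in both cases the extra factor $2^{2j}$ is absorbed; taking $\sup_{j\in\Z}$ then gives $\|F\|_{\wtilde L^1_T(\dot C^{2+s})}\lesssim\|f_0\|_{\dot C^s}+\|f\|_{\wtilde L^1_T(\dot C^s)}$. Adding the two bounds proves \eqref{holder-heat}.

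I expect the only genuinely analytic obstacle to be the uniform-in-$j$ heat-smoothing lemma above; everything afterwards is elementary bookkeeping, made painless by the choice $r=\infty$. A minor technical point is that one should first argue for $f_0,f\in\cS$, as in the hypothesis, so that differentiation under the integral sign and Fubini's theorem are unambiguously legitimate.
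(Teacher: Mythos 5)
Your proof is correct: the uniform smoothing estimate $\|e^{t\Delta}u\|_{L^p}\leq Ce^{-ct2^{2j}}\|u\|_{L^p}$ for annulus-supported $u$, applied blockwise to the Duhamel formula and followed by the $\sup_j$/time-integration bookkeeping, is exactly the classical argument, and your handling of the two norms $\wtilde L^\infty_T(\dot C^s)$ and $\wtilde L^1_T(\dot C^{2+s})$ is accurate. The paper itself gives no proof of this proposition --- it is recalled as a classical result from \cite{B-C-D} --- and your argument coincides with that standard proof, so there is nothing to reconcile.
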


For later use, it is also convenient to show an a priori estimate for the paraproduct $\dot T_v u$ in the space $\tilde L^1_T(\dot C^s)$.
\begin{lemma}\label{lem:holder-product}
For any $s>0$, $\eps>0$, $a,b>0$,  there exists a constant $C_\eps\sim \eps^{-a/b}$ such that
\begin{equation}\label{est:holder-paraproduct}
\|\dot T_v u\|_{\tilde L^1_T(\dot C^s)}\leq C_\eps \Int^T_0  \|u(t)\|_{\dot C^s}^{\frac{a+b}{b}} \|v\|_{\dot C^{-a}}
                                               +\eps \|v\|_{\tilde L^1_T(\dot C^{b})}.
\end{equation}
\end{lemma}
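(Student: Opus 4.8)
The plan is to run the classical paraproduct splitting. For each $t\in[0,T]$ pick an integer $N(t)$ with $2^{bN(t)}\sim\|u(t)\|_{\dot C^s}/\eps$ and decompose the low–regularity factor into low and high frequencies,
\[
\dot T_{v(t)}u(t)=\dot T_{\dot S_{N(t)}v(t)}u(t)+\dot T_{(\Id-\dot S_{N(t)})v(t)}u(t),
\]
expecting the first (low–frequency) paraproduct to produce the term $C_\eps\int_0^T\|u(t)\|_{\dot C^s}^{(a+b)/b}\|v(t)\|_{\dot C^{-a}}\,dt$ and the second (high–frequency) one to produce $\eps\,\|v\|_{\tilde L^1_T(\dot C^b)}$. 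The basic ingredients are: every block $\dot S_{j-1}w\,\dot\Delta_j u$ is spectrally supported in a dyadic annulus $\sim2^j$, so that $\|\dot T_w u(t)\|_{\dot C^s}\lesssim\sup_j2^{js}\|w\|_{L^\infty}\|\dot\Delta_j u(t)\|_{L^\infty}=\|w\|_{L^\infty}\|u(t)\|_{\dot C^s}$; and the elementary bounds $\|\dot S_N v(t)\|_{L^\infty}\le\sum_{j'\le N-1}\|\dot\Delta_{j'}v(t)\|_{L^\infty}\lesssim2^{aN}\|v(t)\|_{\dot C^{-a}}$ (here $a>0$ is used) and $\|\dot\Delta_{j'}v(t)\|_{L^\infty}\le2^{-bj'}\|v(t)\|_{\dot C^b}$ (here $b>0$ is used).

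For the low–frequency piece, the choice of $N(t)$ gives $2^{aN(t)}\sim\eps^{-a/b}\|u(t)\|_{\dot C^s}^{a/b}$, whence
\[
\|\dot T_{\dot S_{N(t)}v(t)}u(t)\|_{\dot C^s}\lesssim\|\dot S_{N(t)}v(t)\|_{L^\infty}\|u(t)\|_{\dot C^s}\lesssim2^{aN(t)}\|v(t)\|_{\dot C^{-a}}\|u(t)\|_{\dot C^s}\sim\eps^{-a/b}\|u(t)\|_{\dot C^s}^{\frac{a+b}{b}}\|v(t)\|_{\dot C^{-a}};
\]
integrating in time and using $\|\cdot\|_{\tilde L^1_T(\dot C^s)}\le\|\cdot\|_{L^1_T(\dot C^s)}$ yields exactly the first term of the statement, with constant $C_\eps\sim\eps^{-a/b}$.

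For the high–frequency piece one has $\dot S_{j-1}\bigl((\Id-\dot S_{N(t)})v(t)\bigr)=\sum_{N(t)\le j'\le j-2}\dot\Delta_{j'}v(t)$, so each block of that paraproduct is again localized at $2^j$ with $L^\infty$–norm $\lesssim\bigl(\sum_{N(t)\le j'\le j-2}\|\dot\Delta_{j'}v(t)\|_{L^\infty}\bigr)\|\dot\Delta_j u(t)\|_{L^\infty}$; bounding $2^{js}\|\dot\Delta_j u(t)\|_{L^\infty}\le\|u(t)\|_{\dot C^s}\lesssim\eps\,2^{bN(t)}$ and summing the geometric tail $\sum_{j'\ge N(t)}2^{-bj'}\lesssim2^{-bN(t)}$ gives the pointwise bound $\|\dot T_{(\Id-\dot S_{N(t)})v(t)}u(t)\|_{\dot C^s}\lesssim\eps\,\|v(t)\|_{\dot C^b}$, hence after time–integration $\lesssim\eps\,\|v\|_{L^1_T(\dot C^b)}$. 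The hard part will be to sharpen this last estimate from $L^1_T(\dot C^b)$ to the Chemin--Lerner norm $\tilde L^1_T(\dot C^b)$ demanded in the statement: this requires treating the high–frequency paraproduct \emph{block by block}, forming $\int_0^T\|\dot\Delta_{j'}v(t)\|_{L^\infty}\,dt$ before summing over $j'$, which is in tension with the time–dependence of the cut–off $N(t)$ that was needed to obtain the pointwise power $\|u(t)\|_{\dot C^s}^{(a+b)/b}$ in the first term; arranging the cut–off so that \emph{both} terms come out precisely as stated is the delicate step of the proof. The boundary contributions (indices $j\sim N(t)$, and times $t$ where $\|u(t)\|_{\dot C^s}$ is very small or zero) are harmless and go into the constants.
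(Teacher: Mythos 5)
Your strategy is the same as the paper's: the identical time-dependent threshold $N_t$ with $2^{bN_t}\sim\eps^{-1}\|u(t)\|_{\dot C^s}$, the same splitting of $v$ into frequencies below and above $2^{N_t}$, and the same treatment of the low-frequency paraproduct, which indeed produces the first term with $C_\eps\sim\eps^{-a/b}$. The problem is that you stop exactly at the step which is the whole content of the lemma. Bounding the high-frequency piece pointwise in time by $\eps\,\|v(t)\|_{\dot C^{b}}$ only yields $\eps\,\|v\|_{L^1_T(\dot C^{b})}$, and for third index $\infty$ Minkowski's inequality goes the wrong way, $\|v\|_{\tilde L^1_T(\dot C^{b})}\leq\|v\|_{L^1_T(\dot C^{b})}$, so what you prove is strictly weaker than \eqref{est:holder-paraproduct}. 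The weaker version is also useless for the purpose the lemma serves: in Step 1 of the proof of Proposition \ref{prop:heat-holder} (estimates \eqref{holder-est:source1}--\eqref{holder-est:source2}) the $\eps$-term must be absorbed into $\|\tilde\varrho_n\|_{\tilde L^1_T(\dot C^{2+\epsilon})}$, which is the only quantity the heat estimate \eqref{holder-heat} controls on the left-hand side; an $L^1_T(\dot C^{2+\epsilon})$ remainder could not be absorbed there. So, as written, the proposal does not prove the statement, and the part you defer is not a routine boundary issue but the decisive step.

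For comparison, the paper closes it by exactly the block-by-block scheme you gesture at but do not carry out, and it never passes through the pointwise norm $\|v(t)\|_{\dot C^{b}}$: starting from $\|\dot T_v u\|_{\tilde L^1_T(\dot C^s)}\leq C\int_0^T\|u\|_{\dot C^s}\sum_{j}\|\ddj v\|_{L^\infty}\,dt$, on the tail $j\geq N_t+1$ it replaces $\|u(t)\|_{\dot C^s}$ by $\eps\,2^{bN_t}$, writes the summand as $\eps\,2^{-(j-N_t)b}\,2^{jb}\|\ddj v\|_{L^\infty}$, shifts the index $j=N_t+k$ with $k\geq1$ so that the geometric weight $2^{-kb}$ is independent of $t$, and only then integrates in time, bounding $\int_0^T 2^{(N_t+k)b}\|\dot\Delta_{N_t+k}v\|_{L^\infty}\,dt$ by $\sup_{j}\int_0^T 2^{jb}\|\ddj v\|_{L^\infty}\,dt=\|v\|_{\tilde L^1_T(\dot C^{b})}$ and summing $\sum_{k\geq1}2^{-kb}$ at the end. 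The ``tension'' you identify -- the $t$-dependence of $N_t$ inside the shifted block -- is thus concentrated in that single sup-bound, which the paper states without further comment; to turn your sketch into a complete proof you must actually perform this index shift and justify (or suitably replace) that inequality, rather than record it as an open delicate step.
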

\begin{proof}
First, let us notice that,
\begin{align*}
\|\dot T_v u\|_{\tilde L^1_T(\dot C^s)}\leq   \Int^T_0 \sum_{j\in \Z} \|u\|_{\dot C^s}\|\ddj v\|_{L^\infty}.
\end{align*}
For any $\eps>0$, $a,b>0$ and $t\in (0,T)$, we fix an integer
$$
N_t=\left[ \frac 1b \log_2 (\eps^{-1}\|u(t)\|_{\dot C^s}) \right]+1,
$$
then  noticing that $\eps^{-1} \|u(t)\|_{\dot C^s}\sim 2^{N_t b}$, we have
\begin{align*}
\Int^T_0 \sum_{j\in \Z} \|u\|_{\dot C^s}\|\ddj v\|_{L^\infty}
&\leq \Int^T_0 \sum_{ j\leq N_t} 2^{ja} \|u(t)\|_{\dot C^s} 2^{-ja} \|\ddj v\|_{L^\infty}
               +\sum_{j\geq N_t+1} 2^{-jb} \|u(t)\|_{\dot C^s} 2^{jb} \|\ddj v\|_{L^\infty}  \\
&\lesssim \Int^T_0  2^{N_t a} \|u(t)\|_{\dot C^s} \|v\|_{\dot C^{-a}}
                +\sum_{j\geq N_t+1} 2^{-(j-N_t)b}\, \eps\,  2^{jb} \|\ddj v\|_{L^\infty} \\
&\lesssim  \Int^T_0 \eps^{-a/b} \|u(t)\|_{\dot C^s}^{(a+b)/b} \|v\|_{\dot C^{-a}}
               + \eps \sum_{j\geq 1} 2^{-jb}\, 2^{(j+N_t)b} \|\dot\Delta_{j+N_t} v\|_{L^\infty} \\
&\lesssim  \eps^{-a/b} \Int^T_0  \|u(t)\|_{\dot C^s}^{(a+b)/b} \|v\|_{\dot C^{-a}}
                +\eps \sup_{j}  \Int^T_0 2^{jb} \|\ddj v\|_{L^\infty}.
\end{align*}
Thus the lemma follows.
\end{proof}

\section{A priori estimates for  parabolic and transport equations in endpoint Besov spaces} \label{s:est_Besov}

The present section is devoted to obtain new a priori estimates for parabolic
and transport equations in the endpoint Besov spaces.
These estimates will be the key point to get our results.

In the first subsection we will focus on the parabolic equations, and we will prove a priori estimates in Chemin-Lerner spaces based on endpoint Besov spaces 
$B^s_{\infty,r}$. This will be useful in the proof of Theorem \ref{thm:L2wp}.

In the second subsection we will prove refined a priori estimates in the enpoint space $B^0_{\infty,1}$ for linear transport equations. This
is a generalization of such results in \cite{H-K-2008,V}, and it will be fundamental in getting
lower bounds for the lifespan of the solution to the inviscid zero-Mach number system.

\subsection{Parabolic estimates in $B^s_{\infty,r}$} \label{s:parabolic}

The present subsection is devoted to state new a priori estimates for linear parabolic equations in endpoint Besov spaces $B^s_{\infty,r}$.
For the reasons explained in the previous section, we will work in the time-dependent Besov spaces
$\tilde L^q_T(B^s_{\infty,r})$ defined above.

\begin{prop}\label{prop:heat-holder}
Let $\rho\in \cS(\R^d)$ solve the following linear parabolic equation
\begin{equation}\label{holder-eq:para}
\left\{
\begin{array}{c}
\d_t\rho-\div(\kappa\nabla\rho)=f,\\
\rho|_{t=0}=\rho_0,
\end{array}
\right.
\end{equation}
with $\kappa,f,\rho_0\in \cS(\R^d)$ and
$$
0<\rho_\ast\leq \rho_0 \leq \rho^\ast,\quad
0<\kappa_\ast\leq \kappa(t,x)\leq \kappa^\ast.
$$
 If $s>0$,  then for any $\epsilon\in (0,1)$, there exists a constant $C$,
depending on $d,s,r,\rho_\ast,\rho^\ast,\kappa_\ast,\kappa^\ast$, such that  the following estimate holds true:
\begin{eqnarray*}\label{heat-holder:rho,s}
& & \hspace{-0.3cm}\|\rho\|_{\wtilde L^\infty_t(B^s_{\infty,r})\cap \wtilde L^1_t(B^{s+2}_{\infty,r})}
\,\leq\,C \,\left[1+\|\kappa\|_{L^\infty_t(B^\epsilon_{\infty,\infty})}^{2/\epsilon}\right]\,
\times \Biggl(\|\rho_0\|_{B^s_{\infty,r}} + \|f\|_{\wtilde L^1_t(B^s_{\infty,r})}
         \\
& &\qquad\qquad\qquad
+\int^t_0\left( \,\left[1+\|\kappa\|_{B^{1+\epsilon}_{\infty,\infty}}^{2/(1+\epsilon)}\right]\,
         \|\rho\|_{B^s_{\infty,r}}+\|\nabla\kappa\|_{L^\infty}\|\nabla\rho\|_{B^s_{\infty,r}}
         +\|\nabla\kappa\|_{B^s_{\infty,r}} \|\nabla\rho\|_{L^\infty}\right)d\tau\Biggr).
\end{eqnarray*}
\end{prop}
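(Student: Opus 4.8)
The plan is to reduce the variable-coefficient parabolic equation \eqref{holder-eq:para} to the constant-coefficient heat equation, to which the classical homogeneous estimate of Proposition \ref{prop:holder,est} applies after reassembling the dyadic pieces. First I would rewrite the principal part by freezing the diffusion coefficient: write $\div(\kappa\nabla\rho)=\kappa\,\Delta\rho+\nabla\kappa\cdot\nabla\rho$, so that $\rho$ solves $\d_t\rho-\kappa\,\Delta\rho = f+\nabla\kappa\cdot\nabla\rho$. Then localise in frequency with the homogeneous blocks $\ddj$ and set $\rho_j:=\ddj\rho$; it solves a heat equation with diffusion frozen at a reference value (say $\kappa_\ast$, or better at an average of $\kappa$) plus a commutator term $[\kappa,\ddj]\Delta\rho$ and the term $\ddj(\nabla\kappa\cdot\nabla\rho)$. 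The point of freezing the coefficient is to have a genuine constant-coefficient semigroup $e^{t\kappa_\ast\Delta}$ to apply; alternatively one can keep $\kappa$ inside and run a Lagrangian/maximal-regularity argument, but the commutator route is cleaner here and matches the way Proposition \ref{prop:holder,est} is stated.

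Next I would estimate the two error terms. For $\ddj(\nabla\kappa\cdot\nabla\rho)$ the natural tool is Bony's decomposition \eqref{eq:bony}: the paraproduct $\dot T_{\nabla\kappa}\nabla\rho$ contributes $\|\nabla\kappa\|_{L^\infty}\|\nabla\rho\|_{\dot C^s}$ (using that $\nabla\kappa$ only appears through $L^\infty$), while $\dot T_{\nabla\rho}\nabla\kappa$ and the remainder contribute $\|\nabla\rho\|_{L^\infty}\|\nabla\kappa\|_{\dot C^s}$ — these exactly produce the last two integral terms in the claimed bound, $\|\nabla\kappa\|_{L^\infty}\|\nabla\rho\|_{B^s_{\infty,r}}$ and $\|\nabla\kappa\|_{B^s_{\infty,r}}\|\nabla\rho\|_{L^\infty}$. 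For the commutator $[\kappa,\ddj]\Delta\rho$ the key estimate is Proposition \ref{prop:space,equiv}: writing $\ddj$ as convolution against $2^{jd}h(2^j\cdot)$, the commutator with multiplication by $\kappa$ gains one derivative in the sense that $\|[\kappa,\ddj]\Delta\rho\|_{L^\infty}$ can be controlled using the Hölder modulus of continuity of $\kappa$ of order $\epsilon$ against $\|\Delta\rho\|$ losing $\epsilon$ derivatives; after summing in $j$ in the $\tilde L^1_t$-sense this is where the factor $\|\kappa\|_{B^\epsilon_{\infty,\infty}}$ and, from the second-order piece, $\|\kappa\|_{B^{1+\epsilon}_{\infty,\infty}}$ enter. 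The powers $2/\epsilon$ and $2/(1+\epsilon)$ arise from an interpolation/absorption step: the commutator term is of the form (small gain in regularity)$\times$(a norm one order too high), and to absorb the top-order part $\|\rho\|_{\dot C^{s+2}}$ into the left-hand side one uses a Young-type inequality $ab\le \eta a^{p}+C_\eta b^{p'}$ exactly as in Lemma \ref{lem:holder-product}, with the constant $C_\eta\sim\eta^{-a/b}$ there giving the $\epsilon$-powers here.

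Concretely, after localisation and the above bounds one gets, for each $j$,
\[
2^{js}\|\rho_j\|_{L^\infty_t(L^\infty)} + 2^{(s+2)j}\|\rho_j\|_{L^1_t(L^\infty)}
\;\lesssim\; 2^{js}\|\ddj\rho_0\|_{L^\infty} + 2^{js}\|\ddj f\|_{L^1_t(L^\infty)} + (\text{error}_j),
\]
and then one takes the $\ell^r$ norm in $j$. The ``error'' contributions split into: a part proportional to $\|\nabla\kappa\|_{L^\infty}\|\nabla\rho\|_{B^s_{\infty,r}}+\|\nabla\kappa\|_{B^s_{\infty,r}}\|\nabla\rho\|_{L^\infty}+(1+\|\kappa\|_{B^{1+\epsilon}_{\infty,\infty}}^{2/(1+\epsilon)})\|\rho\|_{B^s_{\infty,r}}$, which stays on the right-hand side inside the time integral exactly as written; and a part involving the top norm $\|\rho\|_{\tilde L^1_t(B^{s+2}_{\infty,r})}$ multiplied by a factor controlled by $\epsilon(1+\|\kappa\|_{B^\epsilon_{\infty,\infty}})$ after the Young step, which for $\epsilon$ small enough (absorbed into the prefactor $C[1+\|\kappa\|_{L^\infty_t(B^\epsilon_{\infty,\infty})}^{2/\epsilon}]$) is moved to the left. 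Finally, I would pass from homogeneous to non-homogeneous Besov norms using that low frequencies ($j=-1$) are harmless (Bernstein, $\|\Delta_{-1}\rho\|_{L^\infty}\lesssim\|\Delta_{-1}\rho\|_{L^2}$ and the $L^2$ energy estimate \eqref{L2:rho}, or simply the maximum principle for $\rho$), which closes the estimate in $\tilde L^\infty_t(B^s_{\infty,r})\cap\tilde L^1_t(B^{s+2}_{\infty,r})$.

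The main obstacle I expect is the commutator/absorption bookkeeping: getting a clean gain of regularity from $[\kappa,\ddj]\Delta\rho$ that is measured by only the $\epsilon$- and $(1+\epsilon)$-order Hölder norms of $\kappa$ (rather than something requiring $\kappa$ Lipschitz), and then making the Young inequality absorb the $B^{s+2}_{\infty,r}$ norm of $\rho$ while tracking that the constant blows up no faster than $\epsilon^{-2/\epsilon}$-type, i.e. exactly the stated $\|\kappa\|_{B^\epsilon_{\infty,\infty}}^{2/\epsilon}$ and $\|\kappa\|_{B^{1+\epsilon}_{\infty,\infty}}^{2/(1+\epsilon)}$. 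This is precisely the ``new a priori estimate'' the introduction advertises, so it is reasonable that the technical heart of the proof lives here; Lemma \ref{lem:holder-product} and Proposition \ref{prop:space,equiv} are tailored to carry it out.
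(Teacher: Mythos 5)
Your bookkeeping for the lower-order terms is in the right spirit (Bony's decomposition for $\nabla\kappa\cdot\nabla\rho$ producing the last two terms of the estimate, a commutator gaining $\epsilon$ derivatives, and a Young-type absorption as in Lemma \ref{lem:holder-product} producing the exponents $2/\epsilon$ and $2/(1+\epsilon)$), but the core of your argument has a genuine gap: freezing the diffusion coefficient at a \emph{global} constant (whether $\kappa_\ast$ or a global average of $\kappa$) and localizing only in frequency cannot work. Writing $\ddj(\kappa\Delta\rho)=\kappa_\ast\Delta\ddj\rho+(\kappa-\kappa_\ast)\Delta\ddj\rho+[\kappa,\ddj]\Delta\rho$, the term your list of errors omits is $(\kappa-\kappa_\ast)\Delta\ddj\rho$: it is of top order, of size $2^{2j}\|\ddj\rho\|_{L^\infty}$ times $\|\kappa-\kappa_\ast\|_{L^\infty}\sim\kappa^\ast-\kappa_\ast$, an $O(1)$ quantity which is not assumed small and is not improved by the $\dot C^\epsilon$ regularity of $\kappa$. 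Hence it can neither be absorbed into the left-hand side (one would need the oscillation of $\kappa$ to be small compared with $\kappa_\ast$ and the semigroup constant) nor be treated by the interpolation/Young trick, which only helps when the prefactor of the $B^{s+2}_{\infty,r}$ norm can be made small; choosing $\epsilon$ small, as you suggest at the end, does not make this factor small either. This frozen-coefficient difference, not the commutator, is the real difficulty --- the classical Schauder-type obstruction for merely H\"older-continuous coefficients.

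The paper's proof gets around it by a two-level localization that your proposal misses. Step 1 localizes in \emph{physical space}: $\vrho_n=\psi_n\rho$ on balls $B(x_n,\delta)$, with the coefficient frozen at the \emph{local average} $\bar\kappa_n(t)$, so that the top-order error $(\kappa-\bar\kappa_n)\Delta\vrho_n$ carries the factor $C\,\|\kappa\|_{L^\infty_t(\dot C^\epsilon)}\kappa_\ast^{-1}\delta^\epsilon$, made small by the choice $\delta^{-\epsilon}\sim 1+\|\kappa\|_{L^\infty_t(\dot C^\epsilon)}$ in \eqref{delta}; a change of the time variable then removes $\bar\kappa_n(t)$ and Proposition \ref{prop:holder,est} applies, with Lemma \ref{lem:holder-product} handling the paraproducts. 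It is this choice of $\delta$, through the factor $\delta^{-2}=(\delta^{-\epsilon})^{2/\epsilon}$ generated by the derivatives of the cut-offs in $g_n$, that produces the prefactor $1+\|\kappa\|^{2/\epsilon}_{B^\epsilon_{\infty,\infty}}$ (together with $K_2$ in \eqref{K2} giving the $1+\|\kappa\|^{2/(1+\epsilon)}_{B^{1+\epsilon}_{\infty,\infty}}$ inside the time integral), not a pure Young step as in your sketch. Step 2 reassembles the global H\"older estimate from the pieces $\vrho_n$ via the modulus-of-continuity characterization of Proposition \ref{prop:space,equiv} (including its time-integrated version, which is proved precisely for this purpose), and only in Step 3 does your frequency localization appear: the Step 2 estimate is applied to $\tilde\Delta_j\rho$ with $\epsilon<\min\{s,1\}$, the commutator $\div([\kappa,\tilde\Delta_j]\nabla\rho)$ is controlled by Proposition \ref{p:comm}, and one multiplies by $2^{js}$ and takes $\ell^r$ norms. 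Without the physical-space localization (or some substitute able to handle large oscillations of $\kappa$), the absorption step in your plan fails, so the proposal as written does not prove the proposition.
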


\begin{rem}\label{rem:rho}
By Gronwall lemma, it is easy to get the following a priori estimate for $\rho$:
\begin{align*}
\|\rho\|_{\wtilde L^\infty_t(B^s_{\infty,r})\cap  \wtilde L^1_t(B^{s+2}_{\infty,r})}\,
 \leq\,
      C_1\, \exp\{ C_1 K(t)  \}\,
        \bigl ( \|\rho_0\|_{B^s_{\infty,r}} + \|f\|_{\wtilde L^1_t(B^s_{\infty,r})} \bigr)\,,
\end{align*}
where $C_1=C_1(t)$ depends on $d,s,r,\rho_\ast,\rho^\ast,\kappa_\ast,\kappa^\ast$
and $\|\kappa\|_{L^\infty_t(B^\epsilon_{\infty,\infty})}$, and  
$$
K (t):=\Int^t_0 \left( 1+ \| \nabla\kappa\|_{ L^\infty}^{2 }
    +\|\nabla\kappa\|_{B^{s}_{\infty,r}}^{\max\{2/(1+s),1\}} \right) \,d\tau\,.
$$
\end{rem}

Proposition \ref{prop:heat-holder} will be proved in three steps. The strategy is the following.

First of all, we localize the function $\rho$ into countable functions $\vrho_n$, each of which is supported on some ball $B(x_n,\delta)$,
with small radius $\delta\in (0,1)$ to be determined in the proof. 
Hence up to some small perturbation, $\vrho_n$ verifies a heat equation with a \emph{time-dependent} heat-conduction coefficient
(see System \eqref{holder-eq:para-n} below).
Consequently, changing the time variable and making use of estimates for the heat equation entail a control for $\vrho_n$ in  $ \dot E^\epsilon$:
 $$
  \dot E^\epsilon:=  \tilde L^\infty_T( \dot C^\epsilon)\cap \tilde L^1_T (\dot  C^{2+\epsilon}),
  \quad  \epsilon\in (0,1),
$$
This will be done in Step 1.

In step 2, thanks to Proposition \ref{prop:space,equiv}, we carry the result from $\{\vrho_n\}$ to $\rho$.
Note that Maximum Principle applied to parabolic equations has already given us the control on low frequencies of the solution:
\begin{equation}\label{holder:bound}
\|\rho\|_{L^\infty_t(L^\infty)}\leq \|\rho_0\|_{L^\infty}+\Int^t_0 \|f\|_{L^\infty}.
\end{equation}
This already ensures the result in H\"older space $C^\epsilon$.

Step 3 is devoted to  handle general Besov spaces of form $B^s_{\infty,r}$: we again localize the system, but in \textit{Fourier variables};
then we apply the result of Step 2 to $\rho_j:=\dj \rho$ and a careful calculation on commutator terms will yield the thesis.

\medbreak
We agree that in this subsection $\{\varrho_n(t,x)\}$ always denote localized functions of $\rho(t,x)$ in \textit{$x$-space},
while $\rho_j$ as usual, denotes $\dj\rho$ (localization in the \textit{phase space}).


\subsubsection*{Step 1: estimate for $\vrho_n$ in $\dot E^\epsilon$}

  Let us take first a smooth partition of unity $\{\psi_n\}_{n\in \N}$ subordinated to a locally finite covering of $\R^d$.
We suppose that the $\psi_n$'s satisfy the following conditions:
\begin{enumerate}[(i)]
\item $\Supp \psi_n\subset B(x_n,\delta)\triangleq B_n$, with $\delta<1$ to be determined later;
\item $\sum_{n}\psi_n\equiv 1$;
\item $0\leq \psi_n\leq 1$, with $\psi_n\equiv 1$ on $B(x_n,\delta/2)$;
\item $\|\nabla^\eta \psi_n\|_{L^\infty}\leq C|\delta|^{-|\eta|}$, for $|\eta|\leq 3$;
\item for each $x\in \R^d$, there are at most  $N_d$ (depending on the dimension $d$)
elements in  $\{\psi_n\}_{n\in \N}$ covering the ball $B(x,\delta/2)$.
\end{enumerate}


Now by multiplying $\psi_n$ to Equation \eqref{holder-eq:para}, we get the equation for $\vrho_n\triangleq \rho\psi_n$,
which is compactly supported on $B_n$:
\begin{equation}\label{holder-eq:para-n}
\left\{
\begin{array}{c}
\d_t\vrho_n-\bar\kappa_n\Delta\vrho_n=(\kappa-\bar\kappa_n)\Delta\vrho_n+\nabla\kappa\cdot\nabla\vrho_n+g_n,\\
\vrho_n|_{t=0}=\vrho_{0,n}=\psi_n \rho_0,
\end{array}
\right.
\end{equation}
where
$$
\bar\kappa_n(t)\triangleq \frac{1}{\textrm{vol}(B_n)}\Int_{B_n}\kappa(t,y)\,dy
$$
is a function depending only on $t$, and
\begin{equation}\label{g_n}
g_n=-2\kappa\nabla\psi_n\cdot\nabla\rho-(\kappa\Delta\psi_n+\nabla\kappa\cdot\nabla\psi_n)\rho+f\psi_n.
\end{equation}
For convenience  we suppose that there exists a  positive constant $C_\kappa $
such that
\begin{equation}\label{holder-kappa}
\quad |\kappa(t,x)-\kappa(t,y)|\leq C_0 \|\kappa\|_{L^\infty_{t_0}(\dot C^\epsilon)} |x-y|^\epsilon,\quad\forall\, x,y\in \R^d,\, t\in [0,t_0].
\end{equation}
Notice that, by \eqref{holder-kappa}, we have $\bar \kappa_n\geq \kappa_\ast>0$, which ensures that, for all $t\in [0,t_0]$,
\begin{equation}\label{holder-small}
\| \kappa/\bar\kappa_n-1  \|_{L^\infty(B_n)}
\leq \kappa_\ast^{-1}\left\|\frac{1}{\textrm{vol}(B_n)}\Int_{B_n}\kappa(t,x)-\kappa(t,y)\,dy\right\|_{L^\infty(B_n)}
\leq C_\kappa\|\kappa\|_{L^\infty_{t_0}(\dot C^\epsilon)} \kappa_\ast^{-1}\delta^\epsilon.
\end{equation}


In order to get rid of  the variable coefficient $\bar \kappa_n(t)$, let us make the one-to-one change in time variable
\begin{equation}\label{holder-transform}
\tau \triangleq  \tau(t) = \Int^t_0 \bar\kappa_n(t')\, dt'.
\end{equation}
Therefore, the new unknown
$$
\tilde \varrho_n(\tau,x)\triangleq
 \rho_n(t,x),
$$
 satisfies (observe that $\frac{d\tau}{ dt}=\bar\kappa_n(t)$)
\begin{equation}\label{holder-eq:para-tilde}
\left\{
\begin{array}{c}
\d_\tau\tilde\varrho_n-\Delta\tilde \varrho_n
=\left(\frac{\tilde\kappa(\tau)}{\tilde\kappa_n(\tau)}-1\right) \Delta\tilde\varrho_n
  +\frac{\nabla\tilde \kappa(\tau)}{\tilde\kappa_n(\tau)}\cdot\nabla\tilde \varrho_n
  +\frac{\tilde g_n(\tau)}{\tilde\kappa_n(\tau)}
  ,\\
\tilde\varrho_n|_{\tau=0}=\rho_{0,n},
\end{array}
\right.
\end{equation}
where $\tilde\kappa(\tau,x)=\kappa(t,x),\, \tilde\kappa_n(\tau)=\bar\kappa_n(t),\, \tilde \rho(\tau,x)=\rho(t,x),\, \tilde g_n(\tau,x)=g_n(t,x)$.


This is a heat equation: in view of Proposition \ref{prop:holder,est}, we have to bound the ``source'' terms.
Estimate \eqref{est:holder-paraproduct} and
\begin{equation*}
\|\dot T_u v + \dot R(u,v)\|_{\tilde L^1_T(\dot C^\epsilon)}\leq C\|u\|_{L^\infty_T(L^\infty)} \|v\|_{\tilde L^1_T(\dot C^\epsilon)}
\end{equation*}
imply that the first source term of Equation \eqref{holder-eq:para-tilde} can be controlled by
\begin{align*}
\left\|\left(\frac{\tilde\kappa(\tau,\cdot)}{\tilde\kappa_n(\tau)}-1\right)
     \Delta\tilde\varrho_n(\tau,\cdot)\right\|_{\tilde L^1_T(\dot C^\epsilon)}
&\leq C\left\|\tilde\kappa/\tilde\kappa_n-1 \right\|_{L^\infty_T(L^\infty(B_n))}\,
                    \|\Delta\tilde \varrho_n\|_{\tilde L^1_T(\dot C^\epsilon)}\\
&\quad +C_{\eta_1} \Int^T_0 \left\|\tilde\kappa/\tilde\kappa_n-1 \right\|_{\dot C^\epsilon}^{\frac{2}{\epsilon}}
                          \|\Delta\tilde \varrho_n\|_{\dot C^{\epsilon-2}}
                 +\eta_1 \|\Delta\tilde \varrho_n\|_{\tilde L^1_T(\dot C^\epsilon)},
\end{align*}
for any   $\eta_1\in (0,1)$ with $C_{\eta_1}\sim \eta_1^{ \frac{\epsilon-2}{\epsilon}}$. Besides, Inequality \eqref{holder-small} ensures that
for all $\tau\in [0,\tau_0]$, with $\tau_0=\tau(t_0)$,
$$
\|\tilde\kappa/\tilde\kappa_n-1  \|_{L^\infty_{\tau_0}(B_n)}
\leq C_\kappa\|\kappa\|_{L^\infty_{t_0}(\dot C^\epsilon)} \kappa_\ast^{-1}\delta^\epsilon,
$$
which implies, for $T\in [0,\tau_0]$,
\begin{equation}\label{holder-est:source1}
\left\|\left(\frac{\tilde\kappa(\tau,\cdot)}{\tilde\kappa_n(\tau)}-1\right)
\Delta\tilde\varrho_n(\tau,\cdot)\right\|_{\tilde L^1_T(\dot C^\epsilon)}
\leq C_{\eta_1} \Int^T_0  \left\|\tilde\kappa \right \|_{\dot C^\epsilon}^{\frac{2}{\epsilon}}
                          \| \tilde \varrho_n\|_{\dot C^{\epsilon}}
       +(CC_\kappa \|\kappa\|_{L^\infty_{t_0}(\dot C^\epsilon)} \kappa_\ast^{-1} \delta^\epsilon+\eta_1)
       \|\tilde \varrho_n\|_{\tilde L^1_T(\dot C^{2+\epsilon})}.
\end{equation}

For any $\eta>0$, there exists $C_{\eta}\sim \eta^{-1}$ such that
$$
\|T_u v + R(u,v)\|_{\tilde L^1_T(\dot C^\epsilon)}
\leq C_{\eta}\Int^T_0 \|u\|_{L^\infty}^{2} \|v\|_{\dot C^{\epsilon-1}}
                                                +\eta \|v\|_{\tilde L^1_T(\dot C^{\epsilon+1})}.
$$
Thus, also by use of Lemma \ref{lem:holder-product} with $a=1-\epsilon$ and $b=1+\epsilon$, for any $\eta_2\in (0,1)$ we have the following (with $C_{\eta_2}\sim\eta_2^{-1}$):
\begin{align}\label{holder-est:source2}
\left \|\frac{\nabla\tilde \kappa(\tau, \cdot)}{\tilde\kappa_n(\tau)}
            \cdot\nabla\tilde \varrho_n(\tau, \cdot) \right \|_{\tilde L^1_T(\dot C^\epsilon)}
\leq C_{\eta_2} \Int^T_0
              \left (\|\nabla\tilde\kappa\|_{L^\infty}^{2}
                   + \|\nabla\tilde\kappa\|_{\dot C^\epsilon}^{\frac{2}{1+\epsilon}} \right)
                  \|\tilde\varrho_n\|_{\dot C^{\epsilon}}
                  +\eta_2 \|\tilde \varrho_n\|_{\tilde L^1_T(\dot C^{2+\epsilon})}.
\end{align}


Now let us choose $\delta,\eta_1,\eta_2$ such that
\begin{equation}\label{heat-parameter}
C_0CC_\kappa \|\kappa\|_{L^\infty_{t_0}(\dot C^\epsilon)}  \kappa_\ast^{-1} \delta^\epsilon\;,\;\; C_0\eta_1\;,\;\; C_0\eta_2\;\leq\; 1/6,
\end{equation}
with the same $C_0$ in \eqref{holder-heat}.  Then, from Proposition \ref{prop:holder,est} and
estimates \eqref{holder-est:source1}, \eqref{holder-est:source2}, for any $t\in[0,T_0]$ we get, for some ``harmless''
constant still denoted by $C$,
$$
\|\tilde \varrho_n\|_{L^\infty_T(\dot C^\epsilon)\cap \tilde L^1_T(\dot C^{2+\epsilon})}
 \leq C \left( \|\vrho_{0,n}\|_{\dot C^\epsilon}    +   \Int^T_0
      \left(\|\tilde \kappa\|_{\dot C^\epsilon}^{\frac{2 }{\epsilon}}\,
         +\, \|\nabla\tilde\kappa\|_{L^\infty}^{2 }
       \,+\, \|\nabla\tilde \kappa\|_{\dot C^\epsilon}^{\frac{2 }{1+\epsilon}}\right)
          \|\tilde \varrho_n\|_{\dot C^{\epsilon }}
       + \|\tilde g_n\|_{\tilde L^1_T(\dot C^\epsilon)}\right).
$$
 Since $\kappa_\ast\leq \bar\kappa_n\leq \kappa^\ast$, after transformation in time \eqref{holder-transform} we arrive at
\begin{equation}\label{holder-est:rho-n}
\| \vrho_n\|_{L^\infty_T(\dot C^\epsilon)\cap \tilde  L^1_T( \dot C^{2+\epsilon})}
\leq C  \left( \|\vrho_{0,n}\|_{\dot C^\epsilon}+
  \Int^T_0 K_1\|\vrho_n\|_{\dot C^{\epsilon}}
                               + \|  g_n\|_{\tilde L^1_T(\dot C^\epsilon)} \right)
\end{equation}
for all $T\in [0,t_0]$, with
$$
K_1=\|\kappa\|_{\dot C^\epsilon}^{\frac{2 }{\epsilon}}+\|\nabla\kappa\|_{L^\infty}^{2 }
       + \|\nabla\kappa\|_{\dot C^\epsilon}^{\frac{2 }{1+\epsilon}},
$$
provided that we choose
\begin{equation}\label{delta}
\delta^{-\epsilon}\,=\,1\,+\,\wtilde{C}\,\|\kappa\|_{L^\infty_{t_0}(\dot C^\epsilon)}
\qquad\mbox{for some  constant } \wtilde{C}\,\mbox{ depending only on } d\,,\,\epsilon\,.
\end{equation}

\subsubsection*{Step 2: H\"older estimates for $\rho$} 

Now we come back to consider $\rho=\sum_n \varrho_n$.
By assumptions on the partition of unity $\{\psi_n\}$, for any $x$ there exist
$N_d$ balls of our covering which cover the small ball $B(x,\delta/4)$.
Therefore, from inequality \eqref{equiv:space} we have
\begin{align*}
\|\rho\|_{\tilde L^1_t(\dot C^\epsilon)}
&\leq C \left\|\Int^t_0 \frac{\|\rho (\tau, x+y)-\rho(\tau, x)\|_{L^\infty_x}}{|y|^\epsilon}\right\|_{L^\infty_y}\\
&\leq C\sup_{|y|>\delta/4}\Int^t_0 \frac{\|\rho (x+y)-\rho(x)\|_{L^\infty_x}}{|y|^\epsilon}
    +C\sup_{|y|\leq \delta/4} \Int^t_0 \frac{\|\rho (x+y)-\rho(x)\|_{L^\infty_x}}{|y|^\epsilon},
\end{align*}
whose second term can be controlled by
$$
N_dC\sup_{|y-z|\leq \delta/4} \Int^t_0 \frac{\sup_n \|\vrho_n (x+y)-\vrho_n(x+z)\|_{L^\infty_x}}{|y-z|^\epsilon}.
$$
Thus we find
$$
\|\rho\|_{\tilde L^1_t(\dot C^\epsilon)}
\leq C\delta^{-\epsilon} \Int^t_0 \|\rho\|_{L^\infty}
     +N_d C \sup_n \|\vrho_n\|_{\tilde L^1_t(\dot C^\epsilon)}.
$$
Similarly, we have
$$
\|\rho\|_{\tilde L^\infty_t(\dot C^\epsilon)}
\leq C\delta^{-\epsilon}   \|\rho\|_{L^\infty_t(L^\infty)}
     +C \sup_n \|\vrho_n\|_{\tilde L^\infty_t(\dot C^\epsilon)}.
$$
Since $\nabla^2 \rho=\sum_n (\nabla^2 \vrho_n)$, from the same arguments as before we infer
$$
\|\rho\|_{\tilde L^1_t(\dot C^{2+\epsilon})}
\leq  C\|\nabla^2\rho\|_{\tilde L^1_t(\dot C^\epsilon)}
\leq C\delta^{-\epsilon}  \Int^t_0 \|\nabla^2\rho\|_{L^\infty}
     +C \sup_n \|\vrho_n\|_{\tilde L^1_t(\dot C^{2+\epsilon})}.
$$
Therefore, to sum up, for all $t\in [0,t_0]$,
\begin{align*}
\|\rho\|_{\tilde L^\infty_t(\dot C^\epsilon)\cap \tilde L^1_t(\dot C^{2+\epsilon})}
&\leq C\delta^{-\epsilon} \left(\|\rho\|_{L^\infty_t(L^\infty)}+ \Int^t_0 \|\nabla^2\rho\|_{L^\infty} \right)
     +C \sup_n \|\vrho_n\|_{\tilde L^\infty_t(\dot C^\epsilon)\cap \tilde L^1_t(\dot C^{2+\epsilon})} \\
&\leq C\delta^{-\epsilon} \left(\|\rho\|_{L^\infty_t(L^\infty)}+ \Int^t_0 \|\nabla^2\rho\|_{L^\infty} \right)+ \\
&\quad  +C \sup_n \left( \|\vrho_{0,n}\|_{\dot C^\epsilon}+
   \Int^t_0 K_1\|\vrho_n\|_{\dot C^{\epsilon}}
                               + \|  g_n\|_{\tilde L^1_t(\dot C^\epsilon)} \right) ,
\end{align*}
with the second inequality deriving from Estimate   \eqref{holder-est:rho-n}. Thanks to \eqref{holder:bound} and the fact that
$$
\|\vrho_n\|_{  C^\epsilon}
=\|\rho \psi_n\|_{  C^\epsilon}\leq C\|\rho\|_{C^\epsilon}\|\psi_n\|_{C^\epsilon}
\leq C\delta^{-\epsilon}  \|\rho\|_{C^\epsilon},
$$
 we thus have the following estimate for $\rho$ in the nonhomogeneous H\"{o}lder space
 \begin{align*}
 \|\rho\|_{\tilde L^\infty_t(  C^\epsilon)\cap \tilde L^1_t(  C^{2+\epsilon})}
&\leq
      C\|\rho\|_{L^\infty_t(L^\infty)}
      +C\Int^t_0 \|\rho\|_{L^\infty}
      +\|\rho\|_{\tilde L^\infty_t(\dot C^\epsilon)\cap \tilde L^1_t(\dot C^{2+\epsilon})}\\
&\leq
      C \delta^{-\epsilon}  \left(\|\rho_0\|_{ C^\epsilon}
      + \Int^t_0 (\|\nabla^2\rho\|_{L^\infty}+\|\rho\|_{L^\infty}+\|f\|_{L^\infty})
      +\Int^t_0 K_1\|\rho \|_{ C^{\epsilon}} \right) \\
&\quad
      +C \sup_n   \|  g_n\|_{\tilde L^1_t(\dot C^\epsilon)}   ,
\end{align*}


It rests us to bound $g_n$ uniformly. In fact, starting from definition \eqref{g_n} of $g_n$, we follow the same method
to get \eqref{holder-est:source1} and \eqref{holder-est:source2} and we arrive at
\begin{align*}
\|g_n\|_{\tilde L^1_t( \dot  C^\epsilon)}
&\leq C\Int^t_0
     \left( \eta^{-1}(\|\kappa\nabla\psi_n\|_{L^\infty}^2
        +\|\kappa\nabla\psi_n\|_{\dot C^\epsilon}^{\frac{2}{1+\epsilon}})
           +\|\kappa\Delta\psi_n+\nabla\kappa\cdot\nabla\psi_n\|_{L^\infty} \right)\|\rho\|_{\dot C^\epsilon}\\
&\quad
   +\eta \|\rho\|_{\tilde L^1_t(\dot C^{2+\epsilon})}
   +C\Int^t_0 \|\kappa\Delta\psi_n+\nabla\kappa\cdot\nabla\psi_n\|_{\dot C^\epsilon}  \|\rho\|_{L^\infty}
 +C\delta^{-\epsilon}\Int^t_0 \|f\|_{L^\infty}
 +C\|f\|_{\tilde L^1_t(\dot C^\epsilon)} \\
&\leq C_\eta \delta^{-2} \Int^t_0  \left( 1
           + \|\kappa  \|_{  C^\epsilon}^{\frac{2}{1+\epsilon}}
           + \|\kappa \|_{C^{1+\epsilon}} \right) \|\rho\|_{  C^\epsilon}
 +\eta\|\rho\|_{\tilde L^1_t(\dot C^{2+\epsilon})}
        +C\delta^{-\epsilon}\|f\|_{\tilde L^1_t(  C^\epsilon)},
\end{align*}
where we have used $\|f\|_{L^1_t(L^\infty)\cap \tilde L^1_t(\dot C^\epsilon)}\leq C\|f\|_{\tilde L^1_t(C^\epsilon)}$.


We finally get a priori estimates for $\rho$:
\begin{align*}
\|\rho\|_{\tilde L^\infty_t(  C^\epsilon)\cap \tilde L^1_t(  C^{2+\epsilon})}
&\leq C \delta^{-\epsilon}  \left(   \|\rho_0\|_{  C^\epsilon}
              + \Int^t_0 \| \rho\|_{C^2}
               +\|f\|_{\tilde L^1_t(  C^\epsilon)}   \right)
               +C\delta^{-2} \Int^t_0   K_2 \|\rho\|_{  C^{\epsilon}} , \label{holder-est:rho-dot}
\end{align*}
with
\begin{equation}\label{K2}
K_2 =1 + \| \kappa\|_{  C^{1+\epsilon}}^{\frac{2 }{1+\epsilon}}
    \geq C\left(K_1+1+\|\kappa  \|_{  C^\epsilon}^{\frac{2}{1+\epsilon}}
           +\|\kappa\|_{C^{1+\epsilon}} \right)      .
\end{equation}
Thus, by a direct interpolation inequality, that is to say
$$
\delta^{-\epsilon}\|\rho\|_{L^1_t(C^2)}
\leq C_\eta \delta^{-2}\Int^t_0 \|\rho\|_{C^\epsilon}+\eta \|\rho\|_{\tilde L^1_t(C^{2+\epsilon})},
$$
Gronwall's Inequality tells us
\begin{equation}\label{heat-Holder:rho}
\|\rho\|_{\tilde L^\infty_t(  C^\epsilon)\cap \tilde L^1_t(  C^{2+\epsilon})}
\leq C\delta^{-\epsilon}
\left(\|\rho_0\|_{  C^\epsilon}
               +\|f\|_{\tilde L^1_t(  C^\epsilon)}        \right)
               + C\delta^{-2} \Int^t_0   K_2 \|\rho\|_{  C^{\epsilon}}.
\end{equation}

\subsubsection*{Step 3: the general case $B^s_{\infty,r}$} 
Now we want to deal with the general case $B^s_{\infty,r}$.  Let us apply $\tilde\dj=\Delta_{j-1}+\dj+\Delta_{j+1}$, $j\geq 0$,
 to System \eqref{holder-eq:para}, yielding
\begin{equation}\label{holder-eq:dj}
\left\{
\begin{array}{c}
\d_t\ov\rho_j-\div(\kappa\nabla\ov\rho_j)=\ov f_j-\ov R_j,\\
\ov\rho_j|_{t=0}=\ov\rho_{0,j},
\end{array}
\right.
\end{equation}
with
$$
\ov\rho_j=\tilde\dj \rho,\quad \ov f_j=\tilde\dj f,
\quad \ov R_j=\div([\kappa,\tilde\dj]\nabla\rho),\quad \ov\rho_{0,j}=\tilde\dj \rho_0.
$$
We apply the a priori estimate \eqref{heat-Holder:rho} to the solution $\ov\rho_j$ of System \eqref{holder-eq:dj}, for some positive
$\epsilon< \min\{s,1\}$, entailing
$$
\|\ov\rho_j\|_{\tilde L^\infty_t(  C^\epsilon)\cap \tilde L^1_t(  C^{2+\epsilon})}
\leq
  C \delta^{-\epsilon}
     \left (\|\ov\rho_{0,j}\|_{C^\epsilon}
   +\|\ov f_j- \ov R_j\|_{\tilde L^1_t(  C^\epsilon)} \right)
     + C\delta^{-2} \Int^t_0   K_2 \|\ov\rho_j\|_{  C^{\epsilon}}.
 $$

Let us notice that for $j\geq 0$, denoted by $\rho_j=\dj \rho$ and $\rho_q=\dq \rho$ as usual, then we have
$$
\dj \ov \rho_j=\rho_j
\quad\hbox{and}\quad
\dq \ov\rho_j\equiv 0\hbox{ if }|q-j|\geq 2.
$$
  Hence, due to the dyadic characterization of H\"older spaces, the above inequality gives
$$\displaylines{
2^{j\epsilon}\| \rho_j\|_{  L^\infty_t( L^\infty) }
+2^{j(2+\epsilon)} \Int^t_0 \|\rho_j\|_{L^\infty}\,\leq
\hfill\cr\hfill
\leq
   C \delta^{-\epsilon}
 \left( 2^{j\epsilon} \sum_{ |j-q|\leq 1}
      \Bigl( \| \rho_{0,q}\|_{L^\infty}+\Int^t_0\|  f_q\|_{L^\infty}\Bigr)
      +\|\ov R_j\|_{\tilde L^1_t(  C^\epsilon)} \right)
       + C\delta^{-2} 2^{j\epsilon} \sum_{ |j-q|\leq 1} \Int^t_0   K_2 \| \rho_q\|_{ L^\infty}.
 }$$
Finally, by use also of classical commutator estimates Proposition \ref{p:comm} to control the $\ov R_j$ term,
one gets a priori estimate for $\rho_j$:
$$\displaylines{
 \| \rho_j\|_{  L^\infty_t( L^\infty) }
+2^{ 2j } \Int^t_0 \|\rho_j\|_{L^\infty}
\leq
C\delta^{-2}   \sum_{ |j-q|\leq 1} \Int^t_0   K_2 \| \rho_q\|_{ L^\infty}
\hfill\cr
+ C \delta^{-\epsilon}    \left(
          \sum_{ |j-q|\leq 2}
      \Bigl( \| \rho_{0,q}\|_{L^\infty}+\Int^t_0\|  f_q\|_{L^\infty}\Bigr)
      +2^{-js}c_j
        \Int^t_0 \|\nabla\kappa\|_{B^{s}_{\infty,r}}\| \nabla \rho\|_{L^\infty}+
\Int^t_0 \|\nabla\kappa\|_{L^\infty} \|\nabla\rho\|_{B^s_{\infty,r}}
        \right),
 }$$
for some suitable $\left(c_j\right)_j\,\in\,\ell^r$.

Therefore, we multiply both sides by $2^{js}$ (for $s>-1$) and then take $\ell^r$ norm, to arrive at
$$\displaylines{
\|\rho\|_{\tilde L^\infty_t(B^s_{\infty,r})\cap \tilde L^1_t(B^{s+2}_{\infty,r})}
\leq
\hfill\cr
   \leq C \delta^{-2} \left(
         \|\rho_0\|_{B^s_{\infty,r}} + \|f\|_{\tilde L^1_t(B^s_{\infty,r})}
          +\Int^t_0   K_2 \| \rho \|_{B^s_{\infty,r}}
         +\Int^t_0 \|\nabla\kappa\|_{L^\infty} \|\nabla\rho\|_{B^s_{\infty,r}}
                +\|\nabla\kappa\|_{B^{s}_{\infty,r}}\| \nabla \rho\|_{L^\infty}
      \right).
}$$
The definitions \eqref{delta} of $\delta$ and \eqref{K2} of $K_2$ imply the result. This concludes the proof
of Proposition \ref{prop:heat-holder}.

\subsection{Transport equations in $B^0_{\infty,1}$} \label{ss:transport}

We state and prove here new a priori estimates for transport equations
\begin{equation}\label{eq:transport}\left\{\begin{array}{c}
\d_t\omega+v\cdot\nabla\omega=g,\\
\omega|_{t=0}=\omega_0.
\end{array}
\right.\end{equation}
in the enpoint Besov space $B^0_{\infty,1}$.

First of all, let us recall the classical result in the setting of $B^s_{\infty,r}$ classes (see e.g. \cite{B-C-D}, Chapter 3).
\begin{prop}\label{p:transport}  Let $1\leq r\leq\infty$ and
$\sigma>0$ ($\sigma>-1$ if $\div v=0$).
Let $\omega_0\in B^\s_{\infty,r},$
 $g\in L^1([0,T];B^\s_{\infty,r})$ and  $v$
 be a time dependent vector field in $\cC_b([0,T]\times\R^N)$
such that
$$
\begin{array}{lllll}
\nabla v&\in&  L^1([0,T];
L^\infty)&\hbox{\rm if}&\sigma<1,\\[1.5ex]
\nabla v&\in& L^1([0,T];B^{\sigma -1}_{\infty,r})
&\hbox{\rm if}&\sigma>1,\quad
\hbox{\rm  or }\  \sigma=r=1.
\end{array}
$$
Then equation \eqref{eq:transport} has a unique solution  $\omega$ in
\begin{itemize}
\item the space $\cC([0,T];B^{\sigma}_{\infty,r})$ if $r<\infty,$
\item the space $\Bigl(\bigcap_{\sigma'<\sigma} \cC([0,T];B^{\sigma'}_{\infty,\infty})\Bigr)
\bigcap  \cC_w([0,T];B^{\sigma}_{\infty,\infty})$ if $r=\infty.$
\end{itemize}
Moreover,  for all $t\in[0,T],$ we have
\begin{equation}\label{sanspertes1}
e^{-CV(t)}\|\omega(t)\|_{B^\sigma_{\infty,r}}\leq
\|\omega_0\|_{B^\sigma_{\infty,r}}+\int_0^t
e^{-CV(t')}
\|g(t')\|_{B^\sigma_{\infty,r}}\,dt'
\end{equation}
$$\displaylines{
\mbox{with}\quad\!\! V'(t):=\left\{
\begin{array}{l}
\!\|\nabla v(t)\|_{L^\infty}\!\!\!\quad\mbox{if}\!\!\!
\quad\sigma<1,\\[1.5ex]
\!\|\nabla v(t)\|_{B^{\sigma-1}_{\infty,r}}\ \mbox{ if }\
\sigma>1,\quad\!\mbox{or }\
\sigma=r=1.
\end{array}\right.\hfill} $$
 If $\omega=v$ then, for all $\sigma>0$ ($\sigma>-1$ if $\div v=0$),
    Estimate  \eqref{sanspertes1} holds with
$V'(t):=\|\nabla \omega(t)\|_{L^\infty}.$
\end{prop}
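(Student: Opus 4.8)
The plan is to run the classical Littlewood--Paley plus commutator scheme for transport equations, as in \cite{B-C-D}, Chapter~3. First I would localize \eqref{eq:transport} in frequency by applying $\dj$, obtaining for $\omega_j:=\dj\omega$ the equation
\[
\d_t\omega_j+v\cdot\nabla\omega_j=\dj g+R_j,\qquad R_j:=[v\cdot\nabla,\dj]\omega .
\]
An $L^\infty$ estimate along the flow of $v$ on each $\omega_j$ then gives
\[
\|\omega_j(t)\|_{L^\infty}\leq\|\dj\omega_0\|_{L^\infty}+\int_0^t\Bigl(\|\dj g\|_{L^\infty}+\|R_j\|_{L^\infty}+\|\div v\|_{L^\infty}\|\omega_j\|_{L^\infty}\Bigr)\,d\tau ,
\]
where the $\div v$ term comes from integration by parts and is absent when $\div v=0$; this is precisely why the range $\sigma\in(-1,0]$ is allowed only in the solenoidal case, while for $\sigma>0$ one has $\|\div v\|_{L^\infty}\lesssim\|\nabla v\|_{L^\infty}$ and this term is harmlessly absorbed into $V'$.

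The core of the argument is the commutator estimate: I would show that the sequence $\bigl(2^{j\sigma}\|R_j\|_{L^\infty}\bigr)_j$ is bounded in $\ell^r$ by $C\,\|\nabla v\|_{X}\,\|\omega\|_{B^\sigma_{\infty,r}}$, with $X=L^\infty$ for $\sigma<1$ and $X=B^{\sigma-1}_{\infty,r}$ for $\sigma>1$ or $\sigma=r=1$. This follows by writing $v\cdot\nabla\omega=\sum_k\bigl(T_{v^k}\d_k\omega+T_{\d_k\omega}v^k+R(v^k,\d_k\omega)\bigr)$ via \eqref{eq:bony} and commuting $\dj$ with each piece: the paraproduct $T_{v^k}\d_k\omega$ produces the leading term, handled by the (local) Lipschitz bound on $v$, or at the endpoint by the $B^{\sigma-1}_{\infty,r}$ bound on $\nabla v$ in the spirit of Proposition~\ref{p:comm}; the terms $T_{\d_k\omega}v^k$ and $R(v^k,\d_k\omega)$ are estimated by Proposition~\ref{p:prod} together with Bernstein's inequalities (Lemma~\ref{lpfond}), and the hypothesis $\sigma>0$ (resp.\ $\sigma>-1$ when $\div v=0$) is exactly what makes these remainder contributions $\ell^r$-summable. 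Multiplying the localized bound by $2^{j\sigma}$, taking the $\ell^r$ norm, and applying Gronwall's lemma yields \eqref{sanspertes1}. In the special case $\omega=v$ the estimate with $V'(t)=\|\nabla\omega(t)\|_{L^\infty}$ follows directly from the above when $\sigma<1$, and for $\sigma\geq1$ from the refined commutator estimate (cf.\ \cite{B-C-D}), valid precisely when the transported quantity coincides with the transporting field, which bounds $\|[v\cdot\nabla,\dj]v\|_{L^\infty}$ by $c_j\,2^{-j\sigma}\,\|\nabla v\|_{L^\infty}\,\|v\|_{B^\sigma_{\infty,r}}$ with $(c_j)_j\in\ell^r$ of unit norm.

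Finally, for well-posedness: uniqueness is immediate by applying the a priori estimate to the difference of two solutions, which solves \eqref{eq:transport} with zero data and zero source. Existence is obtained by the standard approximation procedure --- mollify $v,\omega_0,g$, solve the smooth problem by the method of characteristics, derive the uniform bounds above, and pass to the limit; the stated time-continuity (strong in $B^\sigma_{\infty,r}$ when $r<\infty$, and continuity into $B^{\sigma'}_{\infty,\infty}$ for every $\sigma'<\sigma$ together with weak continuity into $B^\sigma_{\infty,\infty}$ when $r=\infty$) is then read off from the equation and the uniform estimates, exactly as in \cite{B-C-D}, Chapter~3. The hard part will be the commutator estimate at the borderline indices --- the endpoint $\sigma=r=1$ and the negative range under $\div v=0$ --- where the naive paraproduct bounds are not quite summable and one must carefully exhibit the $\ell^r$ sequence with the right structure; the $r=\infty$ case additionally demands the extra care of proving only weak-in-time continuity.
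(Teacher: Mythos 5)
Your proposal is correct, but note that the paper itself gives no proof of this proposition: it is recalled as a classical result with an explicit pointer to \cite{B-C-D}, Chapter 3, and your sketch is precisely the standard Littlewood--Paley localization plus commutator estimate plus Gronwall argument (with mollification for existence and the usual treatment of time continuity when $r=\infty$) given in that reference, so the two approaches coincide. The only slip is the assertion that a $\|\div v\|_{L^\infty}\|\omega_j\|_{L^\infty}$ term appears ``from integration by parts'' in the $L^\infty$ bound and that this is why $\sigma\in(-1,0]$ requires solenoidal $v$: the sup-norm estimate along characteristics carries no divergence term whatsoever (that phenomenon concerns $L^p$ norms with $p<\infty$), and the solenoidality is needed only where you also, correctly, invoke it --- in the commutator, to rewrite the remainder $R(v^k,\d_k\omega)$ as $\d_k R(v^k,\omega)$ so that the resulting sequence is $\ell^r$-summable down to $\sigma>-1$.
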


Then, the Besov norm of the solution grows in an exponential way with respect to the norm of the transport field $v$.
{}
Nevertheless,  if $v$ is \emph{divernge-free}
then the $B^0_{\infty,r}$ norm of $\omega$ grows linearly in $v$. This
was proved first by Vishik in \cite{V}, and then by Hmidi and Keraani in \cite{H-K-2008}.
{}
Here we generalize their result to the case when $v$ is not divergence free. Of course, we will get a growth also
on $\div v$, which is still suitable for our scopes (see subsection \ref{ss:life}).

\begin{prop}\label{p:transp_0}
Let us consider the linear transport equation \eqref{eq:transport}.

For any $\beta>0$, there exists a constant $C$, depending only on $d$ and $\beta$, such that the following a priori estimate  holds true:
\begin{equation*}\label{est:vort}
\|\omega(t)\|_{B^0_{\infty,1}}\leq
C \Bigl(\|  \omega_0\|_{B^0_{\infty,1}}\,+\,  \|g\|_{L^1_t(B^0_{\infty,1})} \Bigr)
 \Bigl(1+ \cV(t)\Bigr),
\end{equation*}
with $\cV(t)$ defined by
$$\cV(t):=  \Int^t_0\|\nabla v\|_{L^\infty}\,+\, \|\div v\|_{B^{\beta}_{\infty,\infty}}\, dt'\,.$$
\end{prop}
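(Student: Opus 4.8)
The strategy is to adapt the Vishik--Hmidi--Keraani argument, which proves linear growth of the $B^0_{\infty,1}$ norm along a divergence-free flow, to the case of a general transport field $v$ by isolating the contribution of $\div v$. First I would write the flow map $X=X(t,x)$ associated to $v$, i.e. $\partial_t X(t,x)=v(t,X(t,x))$ with $X(0,x)=x$, so that along characteristics the solution reads $\omega(t,X(t,x))=\omega_0(x)+\int_0^t g(\tau,X(\tau,x))\,d\tau$. Localizing in frequency, set $\omega_j=\Delta_j\omega$; each $\omega_j$ solves a transport equation with source $\Delta_j g+[v\cdot\nabla,\Delta_j]\omega$. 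The classical commutator estimate (the non-divergence-free version, which produces a term in $\div v$) controls $\|[v\cdot\nabla,\Delta_j]\omega\|_{L^\infty}$ by $c_j(\|\nabla v\|_{L^\infty}+\|\div v\|_{B^\beta_{\infty,\infty}})\|\omega\|_{B^0_{\infty,1}}$ with $(c_j)\in\ell^1$ of norm $1$; summing these over $j$ and applying Gronwall would only give \emph{exponential} growth, which is not good enough, so the point is to be more clever for the low frequencies.

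\textbf{Key steps.} The refined argument splits $\omega=S_N\omega+(\mathrm{Id}-S_N)\omega$ for an integer $N=N(t)$ to be chosen. For the high-frequency part, $\|(\mathrm{Id}-S_N)\omega(t)\|_{B^0_{\infty,1}}$ is estimated by propagating regularity in a slightly better space, say $B^\varepsilon_{\infty,\infty}$ (or using the logarithmic interpolation inequality), so that $\sum_{j\ge N}\|\Delta_j\omega\|_{L^\infty}\lesssim 2^{-\varepsilon N}\|\omega\|_{\widetilde L^\infty_t(B^\varepsilon_{\infty,\infty})}$, and the latter norm grows at worst exponentially in $\mathcal V(t)$ by Proposition \ref{p:transport} applied at regularity $\varepsilon<1$ (here we use the non-solenoidal commutator estimate again, but at positive regularity the $\div v$ term is harmless). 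For the low-frequency part $S_N\omega$, one uses the explicit representation along characteristics: $\|\Delta_j\omega(t)\|_{L^\infty}$ for $j\le N$ is bounded, via the kernel of $\Delta_j$ and the flow, by $\|\omega_0\|_{B^0_{\infty,1}}+\|g\|_{L^1_t(B^0_{\infty,1})}$ times a factor involving the Lipschitz norm $\|\nabla X\|_{L^\infty}$, which itself is $\le \exp(\int_0^t\|\nabla v\|_{L^\infty})$; crucially, summing $j$ from $-1$ to $N$ only costs a factor $N$, \emph{not} exponential in $N$. Then choosing $N(t)\sim \log(e+\mathcal V(t))$ balances the $2^{-\varepsilon N}e^{C\mathcal V}$ high-frequency term against the $N$ low-frequency factor, and after a Gronwall step one arrives at the linear-in-$\mathcal V(t)$ bound claimed. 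I would track the dependence on $\beta$ through the commutator estimate: $\beta>0$ arbitrary is exactly what is needed so that $\|\div v\|_{B^\beta_{\infty,\infty}}$ (rather than $\|\div v\|_{L^\infty}$, which would not suffice in the commutator at regularity $0$) appears.

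\textbf{Main obstacle.} The delicate point is the low-frequency estimate: one must show that $\|S_N\omega(t)\|_{B^0_{\infty,1}}$ grows only \emph{linearly} in $\mathcal V(t)$, not exponentially. This requires exploiting the Lagrangian representation carefully — estimating $\|\Delta_j(f\circ X^{-1})\|_{L^\infty}$ in terms of $\|f\|_{B^0_{\infty,1}}$ and the Jacobian/Lipschitz bounds of the flow — and then the summation over the $O(N)$ low dyadic blocks, combined with the $\log$-type choice of $N$, is what converts the naive exponential into the final factor $1+\mathcal V(t)$. Handling the non-divergence-free commutator cleanly (so that the $\div v$ contribution is genuinely lower order and sits in $B^\beta_{\infty,\infty}$) is the other technical hurdle; everything else is a routine adaptation of \cite{V,H-K-2008}.
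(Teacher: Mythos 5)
Your plan deviates from the paper's (Hmidi--Keraani style) argument in a way that opens a genuine gap: you decompose the \emph{solution}, $\omega=S_N\omega+(\Id-S_N)\omega$, whereas the paper decomposes the \emph{data}, writing $\omega=\sum_k\omega_k$ where $\omega_k$ solves \eqref{eq:transport} with datum $\dk\omega_0$ and source $\dk g$. With your splitting, the high-frequency bound $\sum_{j\ge N}\|\dj\omega\|_{L^\infty}\lesssim 2^{-\varepsilon N}\|\omega\|_{\wtilde L^\infty_t(B^\varepsilon_{\infty,\infty})}$ is unusable: the datum is only in $B^0_{\infty,1}$, which does \emph{not} embed in $B^\varepsilon_{\infty,\infty}$ for any $\varepsilon>0$, so no transport estimate (nor a log-interpolation inequality) can furnish the norm you invoke. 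The data decomposition is exactly what circumvents this: each $\dk\omega_0$ is spectrally localized, so $\|\dk\omega_0\|_{B^{\pm\epsilon}_{\infty,1}}\sim 2^{\pm\epsilon k}\|\dk\omega_0\|_{L^\infty}$, and propagating each $\omega_k$ at \emph{both} regularities $+\epsilon$ and $-\epsilon$ yields the off-diagonal decay $\|\dj\omega_k\|_{L^\infty}\lesssim 2^{-\epsilon|j-k|}c_j\bigl(\|\dk\omega_0\|_{L^\infty}+\|\dk g\|_{L^1_t(L^\infty)}\bigr)e^{C\cV(t)}$. Moreover, the negative-regularity step $B^{-\epsilon}_{\infty,1}$ is precisely where $\div v\in B^\beta_{\infty,\infty}$ with $\beta>\epsilon$ is needed (one rewrites the remainder in the commutator as a divergence, producing a term $R(\omega_k,\div\tilde v)$); your sketch never performs an estimate below regularity $0$, so the role of $\beta>0$ is not actually located in a step that works.

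Two further points would sink the quantitative conclusion even if the decomposition issue were repaired. First, the balancing: with $N\sim\log(e+\cV(t))$ the high-frequency contribution $2^{-\varepsilon N}e^{C\cV}\sim(e+\cV)^{-c}e^{C\cV}$ remains exponentially large; the correct choice (the paper's) is $N\epsilon\log 2\sim 1+C\cV(t)$, i.e. $N$ \emph{linear} in $\cV$, and it is the factor $N$ coming from the $O(N)$ near-diagonal blocks that produces the final linear growth $1+\cV(t)$. Second, your low-frequency estimate must not carry the factor $\|\nabla X\|_{L^\infty}\le\exp\bigl(\int_0^t\|\nabla v\|_{L^\infty}\bigr)$ you mention, otherwise linearity is lost again; for the near-diagonal blocks the plain maximum principle $\|\omega_k(t)\|_{L^\infty}\le\|\dk\omega_0\|_{L^\infty}+\int_0^t\|\dk g\|_{L^\infty}$ suffices, with no flow bounds at all. (If you instead want a genuinely Lagrangian proof in the spirit of Vishik, the key lemma is a composition estimate with only a \emph{logarithmic} loss, $\|f\circ A\|_{B^0_{\infty,1}}\lesssim\bigl(1+\log(\|\nabla A\|_{L^\infty}\|\nabla A^{-1}\|_{L^\infty})\bigr)\|f\|_{B^0_{\infty,1}}$, not an exponential factor; that is a different proof from the one you outlined and still requires care to see where, if anywhere, $\div v$ enters.) No final Gronwall step is needed in the paper's scheme: the conclusion follows algebraically once $N$ is chosen.
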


\begin{proof}
We will follow the proof of \cite{H-K-2008}.
Firstly we can write the solution $\omega$ of the transport equation \eqref{eq:transport} as a sum:
$\omega=\sum_{k\geq -1}\omega_k$, with $\omega_k$ satisfying
\begin{equation}\label{eq:vort_k}\left\{\begin{array}{c}
\d_t \omega_k+v\cdot\nabla\omega_k=\Delta_k g,\\
\omega_k|_{t=0}=\Delta_k \omega_0.
\end{array}
\right.
\end{equation}
We obviously have from above that
\begin{equation}\label{est:vortL}
\|\omega_k(t)\|_{L^\infty}\leq \|\Delta_k \omega_0\|_{L^\infty}\,+\, \Int^t_0 \|\Delta_k g\|_{L^\infty} \, dt'.
\end{equation}
By classical transport estimates in Proposition \ref{p:transp_0}, for any $\epsilon\in (0,1)$, we have
\begin{equation}\label{est:vorteps}
\|\omega_k(t)\|_{B^{ \epsilon}_{\infty,1}}
\leq \Bigl(\|\Delta_k \omega_0\|_{B^{ \epsilon}_{\infty,1}}
\,+\,   \|\Delta_k g\|_{L^1_t(B^{ \epsilon}_{\infty,1})}\Bigr)\exp\Bigl(C\|\nabla v\|_{L^1_t(L^\infty)}\Bigr).
\end{equation}

In order to get a priori estimates in Besov space $B^{-\epsilon}_{\infty,1}$, after applying the operator $\dj$ to Equation
\eqref{eq:vort_k}, we  write the commutator $[v,\dj]\cdot\nabla\omega_k$  as follows
(recalling Bony's decomposition \eqref{eq:bony} and denoting ${\tilde v}:=v-\Delta_{-1}v$)
\begin{eqnarray*}
& & [T_{\tilde v},\dj]\cdot\nabla\omega_k+T_{\dj\nabla\omega_k}{\tilde v}+R(\dj\nabla\omega_k, {\tilde v})
-\dj(T_{\nabla\omega_k} {\tilde v}) \\
& & \qquad\qquad\qquad\qquad\qquad\qquad
-\dj\div(R(\omega_k,{\tilde v}))+\dj R(\omega_k,\div {\tilde v})
+[\Delta_{-1}v,\dj]\cdot\nabla\omega_k.
\end{eqnarray*}
Then, for all $\beta > \epsilon$, the $L^\infty$-norm of all the above terms can be bounded by
(for some nonnegative sequence $\|(c_j)\|_{\ell^1}=1$):
$$
C(d,\beta)\,2^{-j\epsilon}\,c_j\, \cV'(t) \|\omega_k\|_{B^{-\epsilon}_{\infty,1}}.
$$
Thus, we have the following a priori estimate in the space  $B^{-\epsilon}_{\infty,1}$:
\begin{equation}\label{est:vort-eps}
\|\omega_k(t)\|_{B^{ -\epsilon}_{\infty,1}}
\leq \Bigl(\|\Delta_k \omega_0\|_{B^{ -\epsilon}_{\infty,1}}
\,+\,   \|\Delta_k g\|_{L^1_t(B^{- \epsilon}_{\infty,1})}\Bigr)
\exp\Bigl(C \cV(t)\Bigr).
\end{equation}

On the other side, one has  the following, for some positive integer $N$ to be determined hereafter:
$$
\|\omega\|_{B^0_{\infty,1}}\leq \sum_{j,k\geq -1} \|\dj \omega_k\|_{L^\infty}
=\sum_{|j-k|<N} \|\dj \omega_k\|_{L^\infty}\,+\,
\sum_{|j-k|\geq N} \|\dj \omega_k\|_{L^\infty}.
$$
 Estimate \eqref{est:vortL} implies
$$
\sum_{|j-k|<N} \|\dj \omega_k\|_{L^\infty}
\leq N\sum_k \Bigl(\|\Delta_k \omega_0\|_{L^\infty}\,+\,  \|\Delta_k g\|_{L^1_t(L^\infty)} \Bigr)
\leq N \Bigl(\| \omega_0\|_{B^0_{\infty,1}}\,+\,  \| g\|_{L^1_t(B^0_{\infty,1})} \Bigr),
$$
while Estimates \eqref{est:vorteps} and \eqref{est:vort-eps} entail the following (for some nonnegative sequence $(c_j)\in \ell^1$):
$$
  \|\dj \omega_k\|_{L^\infty}
\leq 2^{-\epsilon|k-j|}  c_j
 \Bigl(\|  \Delta_k \omega_0\|_{L^\infty}\,+\,  \| \Delta_k g\|_{L^1_t(L^\infty)} \Bigr)
\exp\Bigl(C\cV(t)\Bigr),
$$
which issues immediately
$$
\sum_{|j-k|\geq N} \|\dj \omega_k\|_{L^\infty}
\leq 2^{-N\epsilon}   \Bigl(\|  \omega_0\|_{B^0_{\infty,1}}\,+\,  \| g\|_{L^1_t(B^0_{\infty,1})} \Bigr)
\exp\Bigl(C\cV(t)\Bigr).
$$

Therefore, for any $\beta>0$, we can choose   $\epsilon\in (0,1)$ and $N\in \N$ such that $\epsilon<\beta$ and $N\epsilon\log 2\sim 1+C\cV(t)$.
Thus the lemma follows from the above estimates.
\end{proof}

\section{Proof of the main results} \label{s:proofs}

We are now ready to tackle the proof our main results, which this section is devoted to.

First of all we will focus on the proof of Theorem \ref{thm:L2wp}; in the second part, instead,
we will deal with Theorem \ref{th:2D_life}.

\subsection{Proof of the local in time well-posedness result  } \label{s:any_p}

In this subsection we will prove Theorem  \ref{thm:L2wp}.
 We will follow the standard procedure: in Step 1 we construct a sequence of approximate solutions having uniform
 bounds, and in Step 2 we prove the convergence of this sequence.

For the sake of conciseness, we will present the proof just for $r=1$, for which we can use classical time-dependent spaces
$L^q_T(B^s_{\infty,1})$. The general case is just more technical, but it doesn't involve
any novelty: it can be treated as in \cite{F-L_p}, by use of refined commutator and product estimates in Chemin-Lerner spaces.

\medbreak
Let us make some simplifications in the coming proof.
We always suppose the existence time $T^\ast\leq 1$ and that
all the constants appearing in the sequel,  such as $C,C_M,C_E$, are bigger than $1$.
We always denote $f^n=f(\rho^n)$ and $\delta f^n=f(\rho^n)- f(\rho^{n-1})$ for $f=f(\rho)$.

\subsubsection*{Step 1: construction of a sequence of approximate solutions}

 As usual, after  fixing $(\vrho^0, u^0,\nabla\pi^0)=(\vrho_0,  u_0,0)$, we consider inductively
the $n$-th approximate solution $(\vrho^n, u^n, \pi^n)$ to be the unique global solution of the  following linear system:
\begin{equation}\label{L2seqeq}
\left \{\begin{array}{cc}
&\d_{t}\varrho^{n}+u^{n-1}\cdot\nabla\varrho^{n}-\div(\kappa^{n-1}\nabla\varrho^{n})=0,\\
&\d_{t}u^{n}+(u^{n-1}- \kappa^{n-1}(\rho^{n})^{-1}\nabla \rho^n)\cdot\nabla u^{n}+\lambda^{n}\nabla\pi^{n}=h^{n-1},\\
&\div u^{n}=0,\\
&  (\varrho^{n},  u^{n})|_{t=0}= (\varrho_{0}, u_{0}),
\end{array}
\right .
\end{equation}
where $b^{n-1}=b(\rho^{n-1})$, $\kappa^{n-1}=\kappa(\rho^{n-1})$, $\lambda^{n-1}=\lambda(\rho^{n-1})$  and
\begin{equation}\label{L2seqeq:h}
h^{n-1} =(\rho^{n-1})^{-1}\Bigl( \Delta b^{n-1} \nabla a^{n-1}
    \,+\,  u^{n-1}\cdot \nabla^2 a^{n-1}
   \, + \,\nabla b^{n-1}\cdot\nabla^2 a^{n-1} \Bigr)\,,\quad a^{n-1}=a(\rho^{n-1}).
\end{equation}
It is easy to see that by testing $\eqref{L2seqeq}_2$ by $\rho^n u^n$, one should has the energy identity for $u^n$
\begin{equation}\label{L2seq:u,energy}
\frac 12\frac{d}{dt}\Int_{\R^d} \rho^n |u^n|^2=\Int_{\R^d} \rho^n h^{n-1}\cdot u^n.
\end{equation}


In this paragraph, one denotes
$$
M:=\|(\varrho_0, u_0)\|_{B^s_{\infty,1}},
\quad E_0:=\|\varrho_0\|_{L^2}+\|u_0\|_{L^2}.
$$
We aim at proving that there exist a sufficiently small
parameter $\tau$ (to be determined later),
a positive time $T^{*}$ (which may depend on $\tau$),
a positive constant $C_M$ (which may depend on $M$) and a positive constant $C_E$ such that  the uniform estimates for
the solution sequence $(\rho^n,u^n,\nabla\pi^n)$ hold:
\begin{align}
&
 \rho_\ast \leq \rho^{n-1}:=1+\varrho^{n-1}\,,
  \|\varrho^{n-1}\|_{  L^{\infty}_{T^\ast}(  B^s_{\infty,1} )}\leq C_M\,,
\quad
\|\varrho^{n-1}\|_{  L^{2}_{T^\ast}(  B^{s+1}_{\infty,1} ))\cap  L^{1}_{T^\ast}(  B^{s+2}_{\infty,1} )}
       \leq \tau\,,\label{seqest:rho}\\[1ex]
&\|u^{n-1}\|_{  L^{\infty}_{T^\ast}(  B^s_{\infty,1} ) }\leq C_M,\;
\|u^{n-1}\|_{  L^{2}_{T^\ast}(  B^s_{\infty,1} )\cap   L^{1}_{T^\ast}(  B^s_{\infty,1} ) }\leq \tau,
\;\|\nabla\pi^{n-1}\|_{  L^{1}_{T^\ast}(  B^s_{\infty,1}   \cap  L^2)}\leq \tau^{1/2(d+2)},\label{seqest:u}\\[1ex]
&\|\varrho^{n }\|_{L^\infty_{T^\ast}(L^2)}+\|\nabla\varrho^{n }\|_{L^2_{T^\ast}(L^2)}
+\|u^{n }\|_{L^\infty_{T^\ast}(L^2)}\leq C_E E_0 .\label{L2seqest}
\end{align}

Firstly,   by choosing small $T^\ast$, Estimates
\eqref{seqest:rho}, \eqref{seqest:u}  and \eqref{L2seqest} all hold true for $n=0$.
Next we
 suppose the $(n-1)$-th element $(\varrho^{n-1},u^{n-1},\nabla\pi^{n-1}) $ to belong to Space $E$, defined as the set of the
 triplet $(\varrho, u, \nabla\pi)$ belonging to
\begin{equation}\label{space:E,L2}
   \Bigl(
      C(\R^+;B^s_{\infty,1}\cap L^2)\cap L^2_\loc(H^1)\cap \tilde L^1_\loc(B^{s+2}_{\infty,1})
      \Bigr)
 \times
       \Bigl(
    C(\R^+;B^s_{\infty,1} \cap  L^2)
    \Bigr)
 \times
  \Bigl(   L^1_\loc (B^s_{\infty,1})\cap L^1_\loc( L^2) \Bigr),
\end{equation}
such that  the   inductive assumptions are satisfied.
We then just have to show that the $n$-th unknown
$(\varrho^{n }, u^{n },\nabla\pi^{n })$  defined by System \eqref{L2seqeq} belongs to the same space $E$.

\smallbreak

According to Proposition \ref{prop:heat-holder} (or Remark \ref{rem:rho}),  $\varrho^n$  belongs to
$C(\R^+;B^s_{\infty,1})\cap  L^1_t(B^{s+2}_{\infty,1})$ for any finite $t>0$ .
On the other hand, since $u^{n-1}\in L^\infty_\loc(L^\infty)$, energy inequality \eqref{L2:rho} for $\vrho^n$ follows and
$\vrho^n\in C(\R^+;L^2)\cap L^2_\loc(H^1)$.

 As in \cite{F-L_p}, we introduce $\vrho_L$ to be the solution
of the free heat equation with initial datum $\vrho_0$,    which satisfies
 \begin{align}
&\|\varrho_{L}\|_{  L^{\infty}_{T }(B^s_{\infty,1} )}
\,+\,\|\varrho_{L}\|_{  L^{1}_{T }( B^{s+2}_{\infty,1} )}\;\leq\;
C_T\,\|\varrho_0\|_{B^s_{\infty,1}}\,,\quad\forall T>0,\quad C_T \hbox{ depends on }T, \label{seqest:rho,L}\\
& \|\varrho_L\|_{  L^{2}_{T^\ast}( B^{s+1}_{\infty,1} )\cap L^{1}_{T^\ast}( B^{s+2}_{\infty,1})}\leq \tau^2
\quad \hbox{ for small enough }T^\ast.\label{seqest:rho,L,tau}
\end{align}
  Correspondingly, the remainder $\bar\vrho^n:=\vrho^n-\vrho_L$ verifies the following system:
\begin{equation}\label{seqeq:theta,bar}
\left \{
\begin{array}{cc}
&\d_{t}\bar{\varrho}^{n}+u^{n-1}\cdot\nabla \bar{\varrho}^{n}-
\div (\kappa^{n-1} \nabla \bar{\varrho}^{n})=-u^{n-1}\cdot\nabla \varrho _{L}+\div ((\kappa^{n-1}-1)\nabla\varrho _{L}),\\
&\bar{\varrho}^{n}|_{t=0}=0.
\end{array}
\right .
\end{equation}
 Proposition   \ref{prop:heat-holder} (or Remark \ref{rem:rho}) thus implies that
$$
\|\bar\vrho^n\|_{ L^\infty_t( B^s_{\infty,1} )\cap   L^1_t(B^{s+2}_{\infty,1})}
\leq
  \left( C^{n-1}(t)\, e^{C^{n-1}(t) \cK^{n-1}(t)} \right)
  \|f^n\|_{  L^1_t(B^s_{\infty,1})}  ,
$$
where $C^{n-1}(t)$  depends  on $\|\varrho^{n-1}\|_{L^\infty_t(B^s_{\infty,1})}$, and
\begin{align*}
&\cK^{n-1}(t):=t+\|\nabla\kappa^{n-1}\|_{L^2_t( B^s_{\infty,1} )}^2 ,
\\
&f^n:=-u^{n-1}\cdot\nabla\bar\varrho^n
    -u^{n-1}\cdot\nabla\vrho_L
    +\div((\kappa^{n-1}-1) \nabla\varrho_L).
\end{align*}
Inductive assumptions and product estimates in Proposition \ref{p:prod} entail hence
\begin{align*}
&C^{n-1}(t)\, e^{C^{n-1}(t) \cK^{n-1}(t)}\leq C_\cK, \quad \forall t\in [0,T^\ast],
\, C_\cK \hbox{ depending only on }M,
\\&
\|f^n\|_{ L^1_{T^\ast}(B^s_{\infty,1})}
 \leq  \|u^{n-1}\|_{  L^2_{T^\ast}( B^s_{\infty,1} )}
       \| \nabla\bar\varrho^n\|_{  L^2_{T^\ast}(B^s_{\infty,1})}
       + C C_M\tau^2.
\end{align*}
Therefore by the interpolation inequality
 $$
 \| \nabla\bar\varrho^n\|_{  L^2_{T^\ast}(B^s_{\infty,1})}
 \leq C \| \bar\varrho^n\|_{  L^\infty_{T^\ast}(B^s_{\infty,1})} ^{1/2}
 \| \bar\varrho^n\|_{  L^1_{T^\ast}(B^{s+2}_{\infty,1})} ^{1/2},
 $$
 the following smallness statement   pertaining to $\bar\vrho^n$
 \begin{equation}\label{seqest:theta,bar}
\|\bar{ \varrho }^{n}\|_{  L^{2}_{T^{*}}(B^{s+1}_{\infty,1})}
\leq \|\bar{ \varrho }^{n}\|_{ L^{ \infty }_{T^{*}}(B^s_{\infty,1})}
+\Vert \bar{\varrho}^{n}  \Vert_{ L^{1}_{T^{*}}(B^{s+2}_{\infty,1})}\leq \tau^{3/2}\,.
\end{equation}
is verified.
Hence inductive
assumption \eqref{seqest:rho} holds for $\varrho^n$.

\smallbreak 
We will bound $u^n$ and $\nabla\pi^n$   in the following steps:
\begin{enumerate}[(i)]
\item
Energy Identity \eqref{L2seq:u,energy} holds. For bounding $\|h^{n-1}\|_{L^2}$ we use the following inequalities
 \begin{align*}
&\|\Delta b\nabla a\|_{L^2}
\lesssim \|b\|_{B^{s+1}_{\infty,1}} \|\nabla a\|_{L^2}
\lesssim \|\varrho\|_{B^{s+1}_{\infty,1}}\|\nabla\rho\|_{L^2}, \\
&\|u\cdot\nabla^2 a\|_{L^2}
\lesssim \| u\|_{L^2} \|\varrho\|_{B^{s+1}_{\infty,1}}  .
\end{align*}
 Thus the induction assumptions imply  \eqref{L2seqest}.

\item
Standard estimates for transport equation in Besov spaces and inductive assumptions ensure that
\begin{equation}\label{L2seqest:u}
\|u^n\|_{ L^\infty_t(B^s_{\infty,1})}
 \leq
    CC_M e^{C C_M \tau}
   \left( \|u_0\|_{B^s_{p,r}}
    \,+\, \tau +\|\nabla\pi^{n }\|_{  L^1_t(B^s_{\infty,1})}
    \right)
    \leq CC_M( 1  +  \Pi^n ),
 \end{equation}
 where we have defined
 $$
 \Pi^n\,:=\,\|\nabla\pi^n\|_{  L^1_{T^\ast}(B^s_{\infty,1})}.
 $$

 \item 
Consider the elliptic equation satisfied by $\pi^n$:
 \begin{equation*}\label{L2seqeq:pi}
\div(\lambda^n\nabla\pi^n)\,=\,
\div \Bigl( h^{n-1}
-    (u^{n-1}- \kappa^{n-1}(\rho^{n})^{-1}\nabla \rho^n)\cdot\nabla   u^{n} \Bigr).
\end{equation*}
By view of $\|\nabla u^n\|_{L^2_{T^\ast}(L^\infty)}\leq (T^\ast)^{1/2}\|u^n\|_{L^\infty_{T^\ast}(B^s_{\infty,1})}$,
inductive assumptions  \eqref{seqest:u}   and \eqref{L2seqest} imply
\begin{align}\label{L2seqest:pi,L2}
\|\nabla\pi^n\|_{L^1_{T^\ast}(L^2)}
&\leq
C\Bigl\| h^{n-1} -    (u^{n-1}- \kappa^{n-1}(\rho^{n})^{-1}\nabla \rho^n)\cdot\nabla   u^{n}  \Bigr\|_{L^1_{T^\ast}(L^2)} \nonumber \\
&\leq CC_E E_0 ( \tau +  \tau \|u^n\|_{L^\infty_{T^\ast}(B^s_{\infty,1})}) \nonumber\\
&\leq CC_E C_M E_0 \tau   (1+ \Pi^n  ),\ \hbox{ if }  (T^\ast)^{1/2}\leq \tau.
\end{align}

\item 
Now, interpolation between $B^s_{\infty,1}$ and $L^2$   entails hence (with some appropriated $C_\Pi$
and $0<\g<1$)
\begin{align*}
\|\nabla\pi^{n }\|_{ L^1_{T^\ast}(B^{s-1/2}_{\infty,1})}
 \leq
     C\|\nabla\pi^{n }\|_{  L^1_{T^\ast}(L^2)}^{\g}
      \|\nabla\pi^{n }\|_{ L^1_{T^\ast}(B^s_{\infty,1})}^{1-\g}
 \leq
      C_\Pi(1+\Pi^n) \tau^{\g} .
\end{align*}
\item 

Let's consider the following equation
\begin{equation*}\label{L2seqeq:pi,Lap}
\Delta\pi^n
\,=\,
   \nabla\log\rho^n\cdot\nabla\pi^n
  + \rho^n \div \Bigl( h^{n-1}
        -    (u^{n-1}- \kappa^{n-1}(\rho^{n})^{-1}\nabla \rho^n)\cdot\nabla   u^{n} \Bigr).
\end{equation*}
According to product estimates in Proposition \ref{p:prod}, for  some appropriated   $C_\Pi$,
\begin{equation}\label{L2seqest:Delta}
\|\Delta\pi^n\|_{ L^1_{T^\ast}( B^{s-1}_{\infty,1} )}
\leq
        CC_M \|\nabla\pi^n\|_{  L^1_{T^\ast}(B^{s-1/2}_{\infty,1})}
          +CC_M \tau +  CC_M \tau ^2
\leq C_\Pi (1+\Pi^n) \tau^{\g} .
\end{equation}
Notice that such an inequality is true also in the endpoint case $s=1$, for which we have to estimate the $B^0_{\infty,1}$ norm
of $\Delta\pi^n$. This space is no more an algebra, but we can overcome the problem using the $B^{1/2}_{\infty,1}$ regularity.

\item
By decomposing $\nabla\pi^n$ into low frequency part and high frequency part (and using Bernstein's inequality Lemma \ref{lpfond}), one has
$$
\Pi^n\lesssim \|\nabla\Delta_{-1}\pi^n\|_{L^1_{T^\ast}(L^2)}
 + \|\Delta \pi^n\|_{L^1_{T^\ast} (B^{s-1}_{\infty,1})}
 \lesssim \|\nabla \pi^n\|_{L^1_{T^\ast}(L^2)}
 + \|\Delta \pi^n\|_{L^1_{T^\ast} (B^{s-1}_{\infty,1})}.
$$
Thus, the above two estimates  \eqref{L2seqest:pi,L2} and \eqref{L2seqest:Delta}
imply, for $\tau$ and $T^\ast$ small enough, the inductive assumption  \eqref{seqest:u} for $\pi^n$.
Furthermore, \eqref{L2seqest:u} entails the inductive assumption  \eqref{seqest:u} for $u^n$.
\end{enumerate}

\subsubsection*{Step 2: convergence of the approximate solution sequence}\label{s:L2conv}

Let us turn to establish that the above sequence converges to the solution. Let's introduce the difference sequence
$$
(\delta\vrho^n,\,\delta u^n,\,\nabla\delta\pi^n)
   \,=\,(\vrho^n-\vrho^{n-1},\, u^n-u^{n-1},\, \nabla\pi^n-\nabla\pi^{n-1}),
   \quad n\geq 1.
$$
When $n\geq 2$, it verifies the following system:
\begin{equation}\label{L2seqeq:diff}
\left \{
\begin{array}{cc}
&\d_{t}\delta \varrho^{n}+u^{n-1}\cdot\nabla\delta \varrho^{n}
-\div(\kappa^{n-1}\nabla\delta \varrho^{n})=F ^{n-1},\\
&\d_{t}\delta u^{n}
       +( u^{n-1}-\kappa^{n-1}\,\nabla\log\rho^n )\cdot\nabla\delta u^{n}
          +\lambda^{n }\nabla\delta\pi^{n} \, =\, H_e^{n-1},\\
&\div\delta u^{n}=0,\\
&(\delta \varrho^{n},\delta u^{n})|_{t=0}=(0,0),
\end{array}\right .
\end{equation}
where we have set
\begin{align*}
& F ^{n-1}=
        -\delta u^{n-1}\cdot\nabla\varrho^{n-1}
        +\div(\delta\kappa^{n-1} \nabla\varrho^{n-1}),\\
& H_e^{n-1}=\delta h^{n-1}
             -(\delta u^{n-1}
                 -\delta\kappa^{n-1}\nabla\log\rho^n
                 \, -\, \kappa^{n-2} \nabla \delta(\log\rho)^n )\cdot\nabla u^{n-1}
        - \delta\lambda^{n } \nabla\pi^{n-1},
\end{align*}
with $\delta h^{n-1} = h^{n-1}-h^{n-2}$.


 We will consider the difference sequence in the energy space.
 Let's do some analysis first: one needs $H^{n-1}_e$ in $L^1_{T^\ast}(L^2)$ and hence
$$
(\rho^{n-1})^{-1}\Delta\delta b^{n-1} \nabla a^{n-1}
\quad\hbox{ and }\quad
(\rho^{n-1})^{-1}\nabla b^{n-2}\cdot\nabla^2\delta a^{n-1} \hbox{ in }L^1_{T^\ast}(L^2).
$$
We only have $\nabla \varrho^n$ in $L^\infty_{T^\ast}(L^\infty)$, and thus we
need $\nabla^2\delta \varrho^n$ in $L^1_{T^\ast}(L^2)$:  this property
follows from the energy inequality of the equation of $\nabla\delta\varrho^n$
\begin{equation}\label{L2diffseqeq:drho}
 \d_{t}\nabla\delta \varrho^{n}
  +u^{n-1}\cdot\nabla^2\delta \varrho^{n}
 -\div(\kappa^{n-1}\nabla^2\delta \varrho^{n})
=-\nabla\delta\vrho^n\cdot\nabla u^{n-1}
   +\div(\nabla\delta\vrho^n\otimes \nabla\kappa^{n-1})
   +\nabla F^{n-1}\,.
\end{equation}
In the above, the first two terms of the right-hand side are of lower order, while the third one is in \textit{$L^2_\loc(H^{-1})$},
thus taking $L^2$ inner product works.

\smallbreak

Now we begin to make the above analysis in detail.\\
Since $\delta\varrho^n\in E$, the energy equality for Equation $\eqref{L2seqeq:diff}_1$  holds
for $n\geq 2$:
\begin{align*}
\frac 12 \frac{d}{dt}\Int_{\R^d} |\delta\vrho^n|^2
& + \Int_{\R^d} \kappa^{n-1} |\nabla\delta\vrho^n|^2
\\
& \,=\,
 -\Int_{\R^d} \delta u^{n-1}\cdot\nabla\rho^{n-1} \,\delta\vrho^n
 -\Int_{\R^d} \delta \kappa^{n-1}\nabla\rho^{n-1}\cdot\nabla\delta\vrho^n,\,.
\end{align*}
Thus integration in time  and the uniform estimates for solution sequence  give
\begin{align}\label{L2diffseq:rho}
\|\delta\vrho^n\|_{L^\infty_{T^\ast}(L^2)}
&+ \|\nabla\delta\vrho^n\|_{L^2_{T^\ast}(L^2)}
\nonumber \\
& \leq
   C  (\|\delta \vrho^{n-1}\|_{L^\infty_{T^\ast}(L^2)}
       + \|\delta u^{n-1}\|_{L^2_{T^\ast}(L^2)})\,\tau.
\end{align}


Similarly, energy equality holds for  $\nabla\delta\vrho^n$, $n\geq 2$ (in fact, it's not clear that $\delta\varrho^1\in L^2_\loc(H^2)$):
 $$\displaylines{
\hspace{1cm}  \frac 12\frac{d}{dt} \Int_{\R^d} |\nabla\delta\vrho^n|^2
 + \Int_{\R^d} \kappa^{n-1} |\nabla^2\delta\vrho^n|^2
\hfill \cr
=
- \Int \nabla\delta\vrho^n\cdot \nabla u^{n-1}\cdot\nabla\delta\vrho^{n }
+ \nabla\delta\vrho^n\cdot \nabla^2\delta\vrho^n\cdot\nabla\kappa^{n-1}
+ F^{n-1} \Delta\delta\vrho^n .
 }$$
  Integrating in time and the inductive assumptions also imply
\begin{align*}
  \|\nabla\delta\vrho^n\|_{L^\infty_{T^\ast}(L^2)}
&+ \|\nabla^2\delta\vrho^n\|_{L^2_{T^\ast}(L^2)}
 \\
& \leq
   C\tau  \|(\delta \vrho^{n-1}, \delta u^{n-1})\|_{L^\infty_{T^\ast}(L^2)}
   + CC_M\|\nabla\delta\varrho^{n-1}\|_{L^2_{T^\ast}(L^2)}.
\end{align*}
By controlling  $\|\nabla\delta\varrho^{n-1}\|_{L^2_{T^\ast}(L^2)}$ above by \eqref{L2diffseq:rho},
one sums up these two inequalities, entailing
\begin{align}\label{L2diffseqest:rho}
\| \delta\vrho^n\|_{L^\infty_{T^\ast}(H^1)}
&+ \|\nabla\delta\vrho^n\|_{L^2_{T^\ast}(H^1)} \nonumber\\
&\leq
   CC_M\tau \|(\delta \vrho^{n-1},  \delta \vrho^{n-2}, \delta u^{n-1}, \delta u^{n-2})\|_{L^\infty_{T^\ast}(L^2)}.
\end{align}

\smallbreak  

Now we turn to $\delta u^n$. We rewrite $\delta h^{n-1}$ as
$$\displaylines{
\frac{1}{\rho^{n-1}} \Bigl(
      \Delta\delta b^{n-1} \nabla a^{n-1}
         +\Delta b^{n-2}  \nabla\delta a^{n-1}
    + \delta u^{n-1}\cdot\nabla^2 a^{n-1}
    \hfill\cr\hfill
        +   u^{n-2}\cdot\nabla^2\delta a^{n-1}
        +\nabla\delta b^{n-1}\cdot \nabla^2 a^{n-1}
         +\nabla b^{n-2} \cdot\nabla^2\delta a^{n-1}
\Bigr )
\hfill\cr\hfill
    +\Bigl( (\rho^{n-1})^{-1} - (\rho^{n-2})^{-1} \Bigr)
       (\Delta b^{n-2}\cdot \nabla  a^{n-2}
         +  u^{n-2}\cdot\nabla^2 a^{n-2}
          +\nabla b^{n-2} \cdot\nabla^2  a^{n-2}).
}$$
From the inductive estimates we also have that
\begin{align*}
\|\delta h^{n-1}\|_{L^1_{T^\ast}(L^2)}
&\leq CC_M\tau (\|\delta\vrho^{n-1}\|_{L^2_{T^\ast}(H^2) }
       +\|\delta u^{n-1}\|_{L^\infty_{T^\ast}(L^2)})
       \\
&\qquad
       +CC_E E_0 \tau \|\delta\vrho^{n-1}\|_{L^\infty_{T^\ast}(L^2)},
\end{align*}
and
\begin{align*}
\|H_e^{n-1}\|_{L^1_{T^\ast}(L^2)}
&\leq C(C_M+C_E E_0)\tau (\|\delta\vrho^{n-1}\|_{L^2_{T^\ast}(H^2) }
       +\|\delta u^{n-1}\|_{L^\infty_{T^\ast}(L^2)})
       \\
 &\qquad
       +C\tau^{1/{2(d+2)}} \|\delta\vrho^n\|_{L^\infty_{T^\ast}(L^2)}.
\end{align*}

By view of the density equation for $\rho^n$  and $\div \delta u^n=0$, one has
\begin{equation} \label{L2diffseqest:u}
\|\delta u^n\|_{L^\infty_{T^\ast}(L^2)}
\leq C\|H_e^{n-1}\|_{L^1_{T^\ast}(L^2)}.
\end{equation}
Combining Estimate \eqref{L2diffseqest:rho} and \eqref{L2diffseqest:u} entails, for sufficiently small $\tau$ (depending
only on $d$,  $C_M$, $C_E$, $E_0$),
$$
\| \delta\vrho^n\|_{L^\infty_{T^\ast}(H^1)\cap L^2_{T^\ast}(H^2)}
 + \|\delta u^n\|_{L^\infty_{T^\ast}(L^2)}
 \leq
       \frac 16  \|(\delta\vrho^{n-1}, \delta\vrho^{n-2}, \delta\vrho^{n-3},
       \delta u^{n-1},\delta u^{n-2},\delta u^{n-3}) \|_{L^\infty_{T^\ast}(L^2) }.
$$
Thus $\sum \|(\delta\varrho^n, \delta u^n)\|_{L^\infty_{T^\ast}(L^2)}$ converges. Since $\delta\varrho^n\in C(\R^+;L^2)$,
 the Cauchy sequences  $\{\varrho^n\}$ and $\{u^n\}$ converge respectively to $\varrho$ and $u$ in $C([0,T^\ast];L^2)$.
It is also easy to see that
$$
\sum_{n\geq 2} \| \delta\vrho^n\|_{L^\infty_{T^\ast}(H^1)\cap L^2_{T^\ast}(H^2)},
\quad \sum_{n\geq 2} \|\delta h^n\|_{L^1_{T^\ast}(L^2) } ,
\quad  \sum_{n\geq 2} \| H^{n-1}_e\|_{L^1_{T^\ast}(L^2) }  <+\infty.
$$
Rewrite the elliptic equation for  $\pi^n$
\begin{align*}\label{L2diffseqeq:pi}
\div(\lambda^n \nabla\delta\pi^n)
&=\div H^{n-1}_e \,-\, \div(( u^{n-1}-\kappa^{n-1}\,\nabla\log\rho^n )\cdot\nabla\delta u^{n}),\\
&=\div H^{n-1}_e \,-\,\div \Bigl(   \delta u^{n}\cdot \nabla ( u^{n-1}-\kappa^{n-1}\,\nabla\log\rho^n )
+\delta u^n  \div(\kappa^{n-1}\nabla\log \rho^n)  \Bigr).
 \end{align*}
We hence get
$$
\|\nabla\delta\pi^n\|_{L^1_{T^\ast}(L^2)}
\leq C
      (\|H^{n-1}_e\|_{L^1_{T^\ast}(L^2)} + C_M \|\delta u^n\|_{L^\infty_{T^\ast}(L^2)} ).
$$
Thus $\sum^\infty_2 \|\nabla\delta\pi^n\|_{L^1_{T^\ast}(L^2) } $ also converges and hence $\nabla\pi^n$ converges
to the unique limit $\nabla\pi$ in $L^1_{T^\ast}(L^2)$.

Finally, one easily checks that the limit $(\rho, u,\nabla\pi)$ solves System \eqref{system} and   is in $E(T^\ast)$ by Fatou property.
{}
The proof of the uniqueness is quite similar and we omit it.

\subsection{Lower bounds for the lifespan in dimension $d=2$} \label{ss:life}

In this  section, we aim to get a lower bound for the lifespan of the solution in the case of dimension $d=2$.
The idea is   to resort to the vorticity in order to control the \textit{high frequencies} of the velocity field,
as done in \cite{D-F} in the context of incompressible Euler equations with variable density.

We define the (scalar) vorticity $\omega$ of the fluid as in the classical case:
\begin{equation} \label{d:vort}
 \omega\,:=\,\d_1u^2\,-\,\d_2u^1\, \equiv\,\d_1v^2\,-\,\d_2v^1\,.
 \end{equation}
According to  $\eqref{eq:EH}_2$, it satisfies the following transport equation:
\begin{equation} \label{eq:vort}
 \d_t\omega\,+\,v\cdot\nabla\omega\,+\,\omega\,\Delta b\,+\,\nabla\lambda\wedge\nabla\Pi\,=\,0\,,
\end{equation}
where   (recalling the change of variables \eqref{relation:a,b} and \eqref{relation:u})
 $$\displaylines{
 v=u+\nabla b,\quad
 a=a(\rho),\, b=b(\rho),\, \lambda=\lambda(\rho),
 \cr
 \nabla\Pi=\nabla\pi+\nabla\d_t a,\quad
\nabla\lambda\wedge\nabla\Pi\,=\,\d_1\lambda\,\d_2\Pi\,-\,\d_2\lambda\,\d_1\Pi.
}$$

The key to the proof of Theorem \ref{th:2D_life} will be bounding, by use of Proposition \ref{p:transp_0}, the
$B^0_{\infty,1}$ norm of the vorticity \textit{linearly} (but not exponentially)
with respect to the velocity field.

Similarly as in \cite{F-L_p}, let us introduce the following notations:
\begin{align*}
&R_0=\|\varrho_0\|_{B^1_{\infty,1}},
\quad U_0=\|u_0\|_{B^1_{\infty,1}},\\
&R(t)=\|\varrho\|_{L^\infty_t(B^1_{\infty,1})},\quad
S(t)=\|\varrho\|_{L^1_t(B^3_{\infty,1})},\quad
U(t)=\|u\|_{L^\infty_t(B^1_{\infty,1})}.
\end{align*}

First of all, we apply Proposition \ref{prop:heat-holder} to the density equation $\eqref{system}_1$: it's easy to see that we get (noticing $S'(t)=\|\varrho(t)\|_{ B^3_{\infty,1} }$)
\begin{align*}
R+S
&\leq C(1+R^3)\Bigl( R_0 + \int^t_0 UR +UR^{1/2} (S')^{1/2}
+(1+R^{(2-\epsilon)/(1+\epsilon)} (S')^{\epsilon/(1+\epsilon)}) R+R^{3/2} (S')^{1/2}
\Bigr).
\end{align*}
Hence by use of Young's inequality, one arrives at
\begin{align*}
R+S
&\leq
C(1+R^3)R_0
+C(1+R^6)\int^t_0 \left(UR + U^2 R+R+R^3\right)d\tau.
\end{align*}
If we define now
\begin{equation} \label{def_life:T_R}
 T_R\;:=\;\sup\Bigl\{t\,>\,0\;\;\bigl|\;
  R^6\leq 1,\;\int^t_0 R^3(\tau)\,d\tau\;\leq\;2\,R_0\Bigr \}\,,
\end{equation}
 then, for all $t\in [0,T_R]$ we find
\begin{equation}\label{est_life_2D:R+S}
R+S\leq CR_0\exp\Bigl(C\int^t_0 (1+U^2)\Bigr)\,.
\end{equation}

We now estimate the velocity field.
Let us  summarise the following inequalities for the non-linear terms in the momentum equation,
which will be frequently used in the sequel:
\begin{align}
&\|\nabla^2 b(\rho)\|_{B^1_{\infty,1}}
\lesssim \|b\|_{B^3_{\infty,1}}
\lesssim \|\varrho\|_{B^3_{\infty,1}}=S'; \label{est_life_2D:b}\\
&\|\Delta b\nabla a\|_{L^2}
\lesssim \|b\|_{B^2_{\infty,1}} \|\nabla a\|_{L^2}
\lesssim \|\varrho\|_{B^2_{\infty,1}}\|\nabla\rho\|_{L^2}
\lesssim R^{1/2}(S')^{1/2} \|\nabla\rho\|_{L^2}
\leq R\|\nabla\rho\|_{L^2}^2+S' \label{est_life_2D:b,a,L2};\\
&\|(u+\nabla b)\cdot\nabla u\|_{L^2}
\lesssim \|\nabla u\|_{L^\infty}(\|\nabla\rho\|_{L^2}+\|u\|_{L^2})
\lesssim U(\|\nabla\rho\|_{L^2}+\|u\|_{L^2}).\label{est_life_2D:u,L2}
\end{align}
Similarly as the above inequality \eqref{est_life_2D:b,a,L2}, one has also
\begin{equation}\label{est_life_2D:L2}
\|\nabla b\cdot\nabla^2 a\|_{L^2}\lesssim R\|\nabla\rho\|_{L^2}^2+S',
\quad \|u\cdot\nabla^2 a\|_{L^2}\lesssim R\|u\|_{L^2}^2+S'.
\end{equation}

Now, by separating low and high frequencies, we find the following bound for the velocity:
\begin{equation} \label{est_life_2D:low-high}
 U(t)\,\leq\,C\left(\|u\|_{L^2}\,+\,\|\omega\|_{B^0_{\infty,1}}\right).
\end{equation}
From the energy inequality for equation \eqref{eq:u} of $u$, i.e.
$$
\|u(t)\|_{L^2}\,\leq\,C\left(\|u_0\|_{L^2}\,+\,\int^t_0\|\div\left(v\otimes\nabla a\right)\|_{L^2}\,d\tau\right)\,,
$$
due to Inequalities \eqref{est_life_2D:b,a,L2} and \eqref{est_life_2D:L2}, it follows that
\begin{equation} \label{est_life_2D:u_2}
 \|u(t)\|_{L^2}\,\leq\,C\left(\|u_0\|_{L^2}
 \,+\,\int^t_0\Bigl(R(\|\nabla\rho\|^2_{L^2}+ \|u\|^2_{L^2})\,+\,S'\Bigr)\,d\tau\right).
\end{equation}

Now, applying Proposition \ref{p:transp_0} with $\beta=1$ to Equation \eqref{eq:vort}, we find
\begin{eqnarray*}
 \|\omega(t)\|_{B^0_{\infty,1}}
  \lesssim  \left(\|\omega_0\|_{B^0_{\infty,1}}
  \,+\,\int^t_0 \Bigl\| \nabla\lambda\wedge\nabla\Pi+ \omega\,\Delta b\Bigr\|_{B^0_{\infty,1}} d\tau
 \right)
 \left(1+\int^t_0\left(\|\nabla u\|_{L^\infty}+\|\nabla^2 b\|_{B^1_{\infty,1}} \right)d\tau\right).
\end{eqnarray*}
By use of Bony's paraproduct decomposition (see also \cite{D-F}), one has
\begin{eqnarray*}
\|\nabla\lambda\wedge\nabla\pi\|_{B^0_{\infty,1}} & \lesssim & \|\nabla\rho\|_{B^0_{\infty,1}}\,\|\nabla\Pi\|_{B^0_{\infty,1}} \\
\left\|\omega\,\Delta b\right\|_{B^0_{\infty,1}} & \lesssim &
\|\omega\|_{B^0_{\infty,1}}\,\|\Delta b\|_{B^1_{\infty,1}}\,.
\end{eqnarray*}
Hence, by virtue of the relation $\|\omega\|_{B^0_{\infty,1}}\lesssim U$, we get
\begin{equation} \label{est_life_2D:vort}
 \|\omega(t)\|_{B^0_{\infty,1}}
 \leq   C \left(U_0
 +\int^t_0\left(R \|\nabla\Pi\|_{B^0_{\infty,1}} + US'\right)d\tau\right)
\left(1+\int^t_0\|\nabla u\|_{L^\infty}d\tau + S\right) .
\end{equation}


It remains us to deal with the pressure term. First of all, from the density equation we have
\begin{equation*}\label{est_life_2D:Pi}
\|\nabla\Pi\|_{B^0_{\infty,1}}\,\leq\,\|\nabla\pi\|_{B^0_{\infty,1}}\,+\,\|\d_t\nabla a\|_{B^0_{\infty,1}}\,\lesssim\,
\|\nabla\pi\|_{B^0_{\infty,1}}\,+\,
U\,S'\,+\,S' \,.
\end{equation*}
We next  bound $ \pi $  which satisfies the following elliptic equation:
\begin{equation*}\label{lineareq:pi,origin}
\div(\lambda\nabla\pi)=\div(h-v\cdot\nabla u)=\div(h-u\cdot\nabla v+u\div v) .
\end{equation*}
Similarly as in Step 1, Subsection \ref{s:any_p}, by decomposing $\nabla\pi$ into the high and low frequency part separately, one arrives at
\begin{align*}
\|\nabla\pi\|_{B^1_{\infty,1}}
&\lesssim
 \|\nabla\Delta_{-1}\pi\|_{L^\infty}+\|\Delta\pi\|_{B^0_{\infty,1}}\\
&\lesssim
\|\nabla \pi\|_{L^2}+\|\lambda^{-1} [-\nabla\lambda\cdot\nabla\pi + \div(h-v\cdot\nabla u)]\|_{B^0_{\infty,1}}\\
&\lesssim
\| h-v\cdot\nabla u \|_{L^2}+ \|\nabla\rho\|_{B^0_{\infty,1}} \|\nabla\pi\|_{B^{\frac 12}_{\infty,1}}\\
&\quad
+(1+\|\nabla\rho\|_{B^0_{\infty,1}}) (\|h\|_{B^1_{\infty,1}}
+\|\div( v\cdot\nabla u )\|_{B^0_{\infty,1}}).
\end{align*}
By the following  interpolation inequality (recall also Lemma \ref{c:embed}),
\begin{align*}
\|\nabla\pi\|_{ B^{1/2}_{\infty,1} }
& \lesssim \|\nabla\pi\|_{  L^2 }^{1/(d+2)}\|\nabla\pi\|_{  B^1_{\infty,1} }^{(d+1)/(d+2)}\,,
\end{align*}
one derives that, for some $\delta>1$,
$$
\|\nabla\pi\|_{B^1_{\infty,1}}\,\leq\,C\left((1+R^\delta)\| h-(u+\nabla b)\cdot\nabla u \|_{L^2}
\,+\,(1+R)
\,\bigl( \|  h \|_{B^1_{\infty,1}}+\| \div(v\cdot\nabla u)\|_{B^0_{\infty,1}} \bigr)\right).
$$
Then, by the product estimates in Proposition \ref{p:prod},
one finally  bounds   $\nabla\pi$ as follows:
$$
\|\nabla\pi\|_{B^1_{\infty,1}}
 \leq C (1+R^\delta )
 \bigl(
R(\|\nabla\rho\|^2_{L^2}+\|u\|^2_{L^2})
 +U(\|\nabla\rho\|_{L^2}+\|u\|_{L^2})
 + (1+R^2)( U S'  +   S' + U^2)\bigr).
$$

Let us define
$$X(t)\,:=\,U(t)+\|u(t)\|_{L^2}\,=\,\|u(t)\|_{L^2\cap B^1_{\infty,1}}.$$
 So we get
\begin{equation*} \label{est_life_2D:pi}
\|\nabla\Pi\|_{B^0_{\infty,1}},\,\|\nabla\pi\|_{B^1_{\infty,1}}
\,\leq\,C\,\left(1+R^{\delta+2}\right)\Bigl(\|\nabla\rho\|^2_{L^2}\,+\,S'\,+\,X^2\,+\, XS'\Bigr).
\end{equation*}
Therefore,  Estimate \eqref{est_life_2D:vort} for the vorticity becomes (denoting $X(0)=X_0$)
\begin{eqnarray*}
\|\omega(t)\|_{B^0_{\infty,1}} & \lesssim & \left(1+S+ \int^t_0X d\tau \right) \\
& & \times \Biggl( X_0+
\int^t_0(1+R^{\delta+3})\biggl(R\,\|\nabla\rho\|^2_{L^2}
\,+\,R\,S'\,+\,R\,X^2\,+\,X\,S'\biggr)d\tau\Biggr)\,.
\end{eqnarray*}
 Keeping in mind \eqref{est_life_2D:R+S} and
  introducing $\ell:=\delta+4>5$,
 from relation \eqref{est_life_2D:low-high} we finally find, for $t\in [0,T_R]$,
\begin{align*}
  X(t)   \leq  C
\Biggl(X_0+
\underbrace{R_0(1+R_0^{\ell}) e^{C\int^t_0(1+X^2)}  } _{\Gamma_1}
\Bigl(  \int^t_0 \|\nabla\rho\|^2_{L^2} + 1 \Bigr)
&\,+\, \underbrace{ (1+R_0^{\ell})
    e^{C\int^t_0(1+X^2)} \int^t_0  XS'\,d\tau }_{\Gamma_2}\Biggr)
    \\
 &   \times\left(1+S
 + \int^t_0Xd\tau \right).
\end{align*}
We define $T_X$ as the quantity
$$
T_X:=\sup\Bigl\{ t\, | \,
 \Gamma_1(t)\leq 1,
\quad \Gamma_2(t)\leq 1+\|\varrho\|_{L^2}^2+X_0 \Bigr\}.
$$
Then, noticing $S\leq \Gamma_1$, one easily arrives at the following bound for $X(t)$, with $t\in[0,T_R]\cap[0,T_X]$:
$$
X(t)\,\leq\,C  (1+\|\vrho_0\|^2_{L^2} +X_0)\left(1+\int^t_0X(\tau)d\tau\right).
$$

Hence, since $\Gamma_0:= (1+\|\vrho_0\|^2_{L^2} +X_0)$, then by Gronwall's lemma we get
$
 X(t)\,\leq\,C\Gamma_0\,e^{C\Gamma_0 t}$.
After a long  but straightforward  calculation (omitted here), one can check that $T$, defined by relation \eqref{est_life_2D:lifespan}
where we take a small enough constant $L$, satisfies
$$
R^6\leq 1,
\quad \int^T_0 R^3\leq 2R_0,
\quad \Gamma_1(T)\leq 1,
\quad  \Gamma_2(T)\leq 1+\|\varrho_0\|_{L^2}^2+X_0.
$$
This completes the proof of Theorem \ref{th:2D_life}.

\bigbreak\bigbreak

{\small

}

\vspace{3cm}

\begin{flushleft}
\textsl{Francesco Fanelli} \\ \vspace{.1cm}  
{\small 
\textit{Institut de Math\'ematiques de Jussieu-Paris Rive Gauche -- UMR 7586} \\
 \textsc{Universit\'e Paris-Diderot -- Paris 7} \\
  B\^atiment Sophie-Germain, case 7012 \\
   56-58, Avenue de France \\
   75205 Paris Cedex 13 -- FRANCE \\ \vspace{.1cm}
   E-mail: \texttt{fanelli@math.jussieu.fr} }

 \bigbreak
 \bigbreak

\textsl{Xian Liao} \\ \vspace{.1cm} 
{\small \textit{Academy of Mathematics \& Systems Science} \\
 \textsc{Chinese Academy of Sciences}\\
  55 Zhongguancun East Road  \\
  100190 Beijing -- P.R. CHINA \\ \vspace{.1cm}
E-mail: \texttt{xian.liao@amss.ac.cn} }
\end{flushleft}

\end{document}